\providecommand\@enum@widestlabel{7}
\newtheorem{lemma}{Lemma}[section]
\newtheorem{theorem}[lemma]{Theorem}
\newtheorem{corollary}[lemma]{Corollary}
\newtheorem{proposition}[lemma]{Proposition}
\theoremstyle{definition}
\newtheorem{definition}[lemma]{Definition}
\newtheorem{remark}[lemma]{Remark}
\newtheorem{example}[lemma]{Example}
\newtheorem{question}[lemma]{Question}
\renewcommand{\theequation}%
{\arabic{section}.\arabic{lemma}.\arabic{equation}}
\newcommand{\sI}{\ensuremath{\kern -1pt \mathscr{I}\kern -2pt}} 
\newcommand{\sJ}{\ensuremath{\kern -2pt \mathscr{J}\kern -2pt}}
\newcommand{\bb}{\ensuremath{\mathfrak{b}}}
\renewcommand{\geq}{\geqslant}
\renewcommand{\leq}{\leqslant}
\DeclareMathOperator{\mult}{mult}
\DeclareMathOperator{\Nef}{Nef}
\DeclareMathOperator{\Sym}{Sym}
\newcommand{\ch}{\textup{ch}}
\newcommand{\lc}{\textup{lc}}
\definecolor{shadecolor}{gray}{0.875}
\let\cal\mathcal
\let\bb\mathbb
\title{Positivity and base loci of vector bundles revisited}
\author{Mihai Fulger}
\address{Department of Mathematics, University of Connecticut, Storrs, CT 06269-1009
}
\address{Institute of Mathematics of the Romanian Academy, P.O. Box 1-764, RO-014700, Bucharest, Romania}
\email{mihai.fulger@uconn.edu}
\author{Nabanita Ray}
\address{Chennai Mathematical Institute, H1 SIPCOT IT Park, Siruseri, Kelambakkam 603103, 
	India}
\email{nabanitar@cmi.ac.in/nabanitaray2910@gmail.com}
\date{}
\begin{document}
	
	\maketitle
	
	\begin{abstract}
		We give equivalent descriptions for the augmented and diminished base loci of vector bundles in characteristic zero. We show that these base loci behave well under pullback, tensor product, and direct sum. Pathological behavior is observed on some nonsplit exact sequences. 
	\end{abstract}
	
	\section{Introduction}
	
	It is classical to study the geometry of a projective variety $X$ by using positivity properties of line bundles on it. The standard generalization to higher rank bundles $E$ is to impose the analogous positivity property to the Serre line bundle $\mathcal O_{\bb P(E)}(1)$ on the projective bundle of 1-dimensional quotients of $E$. This approach has been studied intensely for ampleness or nefness. Its generalizations to bigness and pseudo-effectivity are called \emph{L-bigness} and \emph{L-pseudo-effective} respectively.
	However there are stronger generalizations due to Viehweg that we define below.
	
	Fix any ample divisor polarization $A$ of $X$. For a vector bundle $E$ on $X$ we may define the \emph{augmented base locus} $\bb B_+(E)$ by
	\[X\setminus\bb B_+(E)=\bigl\{x\in X\ \mid\ \exists\  m,q\geq 1,\ \Sym^m(\Sym^qE(-A))\text{ is generated by global sections at } x\bigr\}.\]
	Then say that $E$ is \emph{Viehweg-big} (or \emph{V-big}) for short if $\bb B_+(E)\neq X$. In rank 1 this agrees with the usual bigness notion for line bundles. For ranks $r>1$ however it is a more restrictive notion than {L-bigness}. For instance a direct sum of line bundles on a curve is L-big precisely when one of the summands is ample, but V-big precisely when all the summands are ample.
	We may also define the \emph{diminished base locus} $\bb B_-(E)$ by 
	\[X\setminus\bb B_-(E)=\bigl\{x\in X\ \mid\ \forall\ q\geq 1\ \exists\ m\geq 1,\ \Sym^m(\Sym^qE(A))\text{ is generated by global sections at }x\bigr\}.\] 
	Say that $E$ is \emph{V-pseudo-effective} (\emph{V-psef} for short) if $\bb B_-(E)\neq X$. Again for line bundles this agrees with the usual pseudo-effectivity notion. However for ranks $r>1$ V-pseudo-effecitivity is usually stronger than L-pseudo-effectivity.
	
	These base loci were also introduced and studied in \cite{BKKMSU}. They prove that $\bb B_{\pm}(E)=\pi(\bb B_{\pm}(\cal O_{\bb P(E)}(1)))$, where $\pi:\bb P(E)\to X$ is the bundle projection. Our work complements theirs. 
	In our first result we offer equivalent descriptions for these notions of base locus.
	
	\begin{theorem}[\ref{lem:baselociviatensorpowers},\ref{remark:basesimplesym}]\label{thm:interpretations}
		Let $X$ be a projective variety over an algebraically closed field of characteristic zero.
		Fix $x\in X$ and $A$ an ample Cartier divisor on $X$. The following are equivalent:
		\begin{enumerate}
			\item There exist $q,m\geq 1$ such that $\bigotimes^{mq}E(-mA)$ is generated by global sections at $x$.
			\item $x\not\in\bb B_+(E)$.
			\item There exist $q,m\geq 1$ such that $\Sym^{mq}E(-mA)$ is generated by global sections at $x$.
		\end{enumerate}
	\end{theorem}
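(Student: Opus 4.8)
The plan is to prove the cycle $(1)\Rightarrow(2)\Rightarrow(3)\Rightarrow(1)$: the first two implications are formal, and all the content is in $(3)\Rightarrow(1)$. I will use three elementary facts about a coherent sheaf $\mathcal F$ being \emph{generated by global sections at} $x$: the property passes to quotient sheaves; it passes to direct summands (the $H^0$-level projection onto a summand commutes with evaluation at $x$); and, for $\mathcal F$ locally free, it is stable under tensor products, hence --- in characteristic zero --- under all Schur functors, by splitting $\Sym^k$, $\bigwedge^k$ and $\mathbb S_\lambda$ off $\bigotimes^k$ with the usual idempotents.

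For $(1)\Rightarrow(2)$: over a field of characteristic zero the symmetrisation idempotents exhibit $\Sym^m(\Sym^q E)$ as a direct summand of $\bigotimes^m\!\bigl(\bigotimes^q E\bigr)=\bigotimes^{mq}E$, so $\Sym^m(\Sym^q E(-A))=\Sym^m(\Sym^q E)\otimes\mathcal O_X(-mA)$ is a direct summand of $\bigotimes^{mq}E\otimes\mathcal O_X(-mA)$; thus $(1)$ for a pair $(m,q)$ forces $\Sym^m(\Sym^q E(-A))$ generated at $x$, i.e.\ $x\notin\Bplus(E)$. For $(2)\Rightarrow(3)$: the multiplication map $\Sym^m(\Sym^q E)\twoheadrightarrow\Sym^{mq}E$ is surjective, so twisting by $\mathcal O_X(-mA)$ realises $\Sym^{mq}E(-mA)$ as a quotient of $\Sym^m(\Sym^q E(-A))$, which is generated at $x$ for a suitable $(m,q)$ by the definition of $\Bplus(E)$.

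The work is $(3)\Rightarrow(1)$; here one cannot build tensor powers from symmetric powers by formal sheaf operations (e.g.\ $\bigwedge^j E$ for $j\geq2$ is a summand of no tensor product of higher symmetric powers of $E$). Pass to $\pi\colon Y=\PP(E)\to X$ with $\pi_*\mathcal O_Y(N)=\Sym^N E$ and $R^{>0}\pi_*\mathcal O_Y(N)=0$ for $N\geq0$. By cohomology and base change, $\Sym^{mq}E(-mA)$ is generated at $x$ precisely when the restriction $H^0\bigl(Y,\mathcal O_Y(mq)-m\pi^*A\bigr)\to H^0\bigl(\pi^{-1}(x),\mathcal O_Y(mq)|_{\pi^{-1}(x)}\bigr)$ is onto; composing with the relative evaluation, this makes $\mathcal O_Y(mq)-m\pi^*A$ generated by global sections along the fibre $\pi^{-1}(x)$, so $\pi^{-1}(x)$ is disjoint from $\Bstable(\mathcal O_Y(q)-\pi^*A)\supseteq\Bplus(\mathcal O_Y(1))$ --- which, via the identity $\Bplus(E)=\pi(\Bplus(\mathcal O_Y(1)))$ of \cite{BKKMSU}, already re-establishes $(2)$. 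Now for $N\geq1$ let $\rho\colon Z=Y\times_X\cdots\times_X Y\to X$ be the $N$-fold fibre product with factor projections $p_1,\dots,p_N$; by the projection formula and flat base change $\rho_*\bigl(p_1^*\mathcal O_Y(1)\otimes\cdots\otimes p_N^*\mathcal O_Y(1)\bigr)=\bigotimes^N E$, so $\bigotimes^{N}E(-cA)$ is the direct image of the integral line bundle $L:=p_1^*\mathcal O_Y(1)\otimes\cdots\otimes p_N^*\mathcal O_Y(1)\otimes\rho^*\mathcal O_X(-cA)$, which for $N=cq$ is $\QQ$-linearly equivalent to $\sum_{i=1}^{N}p_i^*\bigl(\mathcal O_Y(1)-\tfrac1q\pi^*A\bigr)$. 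From the inclusion $\Bstable\bigl(\textstyle\sum_i p_i^*D\bigr)\subseteq\bigcup_i p_i^{-1}\bigl(\Bstable(D)\bigr)$ (products of nonvanishing sections) and the disjointness above, $\Bstable(L)$ is disjoint from $\rho^{-1}(x)=\pi^{-1}(x)\times\cdots\times\pi^{-1}(x)$.

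The main obstacle is the final step: to upgrade ``$\Bstable(L)$ avoids the fibre $\rho^{-1}(x)$'' to surjectivity of the restriction $H^0(Z,L)\to H^0(\rho^{-1}(x),L|_{\rho^{-1}(x)})$, i.e.\ to generation of $\rho_*L=\bigotimes^{N}E(-cA)$ at $x$ --- exactly the phenomenon that makes $(3)$, rather than ``$\pi^{-1}(x)\cap\Bplus(\mathcal O_Y(1))=\varnothing$'', the right hypothesis. Here I would use that $p_1^*\mathcal O_Y(1)\otimes\cdots\otimes p_N^*\mathcal O_Y(1)$ is $\rho$-very-ample (a box product of relatively very ample line bundles, trivial along the fibres of $\rho$ after the twist), so $R^{>0}\rho_*L=0$ and cohomology and base change applies; combining disjointness of $\Bstable(L)$ from the proper fibre $\rho^{-1}(x)$ with relative Serre vanishing and the Leray spectral sequence reduces the needed surjectivity to $H^1\bigl(X,\bigotimes^{N}E(-cA)\otimes\mathcal I_x\bigr)=0$, which one obtains by taking $N$ large enough along the ray (equivalently, replacing $(m,q)$ by a sufficiently divisible multiple, after which the global sections of $\mathcal O_Y(mq)-m\pi^*A$ restrict to $\pi^{-1}(x)$ as the complete very ample linear system). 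Granting this, twisting $0\to\mathcal I_x\to\mathcal O_X\to\kappa(x)\to0$ by $\bigotimes^{N}E(-cA)$ gives generation at $x$, i.e.\ $(1)$ with $(m',q')=(c,q)$, closing the cycle.
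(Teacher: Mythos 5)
Your implications $(1)\Rightarrow(2)\Rightarrow(3)$ are correct (direct summand for the first, quotient for the second), and they mirror the paper. The trouble is entirely in $(3)\Rightarrow(1)$, where you abandon the route the paper actually takes on an incorrect premise and then leave the crucial step unproved.

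First, the premise: you write that ``$\bigwedge^j E$ for $j\geq 2$ is a summand of no tensor product of higher symmetric powers of $E$'', and use this to conclude that one ``cannot build tensor powers from symmetric powers by formal sheaf operations.'' This is not so. In characteristic zero, by Pieri's rule every Schur functor $\bb S_{\lambda}E$ with $\lambda=(\lambda_1\geq\dots\geq\lambda_r)$ is a direct summand of $S^{\lambda_1}E\otimes\cdots\otimes S^{\lambda_r}E$; in particular $\bigwedge^j E=\bb S_{(1^j)}E$ is a summand of $E^{\otimes j}$, and more relevantly each $\bb S_\lambda E$ appearing in $T^{Mq}E$ is a summand of a tensor product of symmetric powers $S^{\lambda_i}E$. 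The paper's proof of Lemma \ref{lem:baselociviatensorpowers} is built precisely on this: decompose $T^{Mq}E=\bigoplus_\lambda\bb S_\lambda E$, split each $\bb S_\lambda E$ off $S^{\lambda_1}E\otimes\cdots\otimes S^{\lambda_r}E$, write $\lambda_i=a_i(mq)+b_i$ with $0\leq b_i<mq$, absorb the quotients $a_i$ into copies of the globally-generated-at-$x$ piece $S^{mq}E(-mA)$, and handle the bounded remainders $S^{b_1}E\otimes\cdots\otimes S^{b_r}E$ by a uniform ample twist. This is a clean, purely representation-theoretic argument that closes $(3)\Rightarrow(1)$ with no cohomology or geometry of $\bb P(E)$.

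Second, the gap. You pass to the $N$-fold fibre product $\rho\colon Z=\bb P(E)\times_X\cdots\times_X\bb P(E)\to X$ with $L=\bigotimes_i p_i^*\mathcal O(1)\otimes\rho^*\mathcal O(-cA)$ so that $\rho_*L=T^N E(-cA)$, show $\Bstable(L)$ is disjoint from $\rho^{-1}(x)$, and then assert that this can be upgraded to generation of $\rho_*L$ at $x$ for large $N$. This last step is not justified, and in fact it is precisely the inference that is known to fail: disjointness of $\Bstable(L)$ from the fibre only gives that some power $L^{\otimes k}$ is globally generated along $\rho^{-1}(x)$, and for $k>1$ this does not imply that the pushforward is globally generated at $x$. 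The paper's own discussion (following the lemma on $\mathcal O_{\bb P(E)}(1)$, with the reference to \cite{MU}*{Example 3.2}) emphasizes this: $\mathcal O_{\bb P(E)}(m)$ globally generated along a fibre need not force $S^mE$ globally generated at the base point, because for $m>1$ proper subspaces of $H^0(\bb P^n,\mathcal O(m))$ can generate $\mathcal O(m)$. The same pathology affects $\mathcal O(1,\dots,1)$ on $\rho^{-1}(x)\cong(\bb P^{r-1})^N$: the symmetric tensors $S^N V^*\subsetneq (V^*)^{\otimes N}$ already globally generate it, so base-point-freeness along the fibre carries no information about surjectivity onto the full fibre of $\rho_*L^{\otimes k}$. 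Your appeal to ``relative Serre vanishing and Leray'' to reduce to $H^1(X,T^N E(-cA)\otimes\sI_x)=0$ is circular: that vanishing is (essentially) a restatement of global generation at $x$, and nothing you have established gives it. No choice of $N$ along the ray fixes this, because you are twisting by a fixed negative multiple of $A$; Serre vanishing requires a positive twist that grows with $N$.

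Also a minor point: the aside that $\Bstable(q\xi-\pi^*A)\supseteq\Bplus(\xi)$ does not follow, since $\pi^*A$ is not ample on $\bb P(E)$; this is inessential to your argument but should not be left as a claim.

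In short: $(1)\Rightarrow(2)\Rightarrow(3)$ is fine, but $(3)\Rightarrow(1)$ needs the Schur/Pieri decomposition argument (which does work, contrary to your opening remark); the fibre-product route founders on exactly the phenomenon that makes $(3)\Rightarrow(1)$ nontrivial in the first place.
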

	
	\noindent Similar equivalent interpretations exist for $\bb B_-(E)$. The implications $(1)\to(2)\to (3)$ are straightforward without restriction on the characteristic. The idea of $(3)\to (1)$ is that in characteristic zero tensor powers are direct sums of Schur functors each of which is a direct summand of a tensor product of symmetric powers. This idea was also employed in \cite{FM2} to prove that $(1)$ and $(2)$ are equivalent, and earlier by \cite{Jager} to prove that if $E$ is an ample vector bundle, then $\bigotimes^mE$ is eventually globally generated.
	The arguments for $\bb B_-(E)$ are new.
	
	As a consequence we see in Corollary \ref{cor:homogeneousB} that $\bb B_{\pm}$ base loci are homogeneous for symmetric (or tensor) powers. In particular $E$ is V-big if and only if $\Sym^mE$ is V-big for some (or any) $m\geq 2$. In Example \ref{ex:Lcounter} we see that the converse fails for L-bigness.
	\smallskip
	
	We also study the behavior of base loci under pullback of vector bundles by surjective morphisms:
	
	\begin{theorem}[\ref{prop:B+pull},\ref{prop:restrictedbasepullback} ]Let $f:X\to Y$ be a surjective morphism of normal projective varieties over an algebraically closed field of characteristic zero and let $E$ be a vector bundle on $Y$. Then
		\begin{enumerate}
			\item We have $\bb B_+(f^*E)=f^{-1}\bb B_+(E)\bigcup\text{NF}(f)$, where $\text{NF}(f)$ is the union of the curves $C\subset X$ that are contracted by $f$.
			\item If $Y$ is smooth, or if $f$ is equidimensional, then $\bb B_-(f^*E)=f^{-1}\bb B_-(E)$.
		\end{enumerate}
	\end{theorem}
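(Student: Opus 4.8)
The strategy is to reduce both statements to the functoriality of the augmented and diminished base loci of \emph{line bundles} on the associated projective bundles, via $\bb B_\pm(E)=\pi(\bb B_\pm(\cal O_{\bb P(E)}(1)))$ from~\cite{BKKMSU}. Consider the fibre square
\[
\xymatrix{
\bb P(f^*E)\ar[r]^{g}\ar[d]_{\pi'} & \bb P(E)\ar[d]^{\pi}\\
X\ar[r]^{f} & Y
}
\]
in which $\bb P(f^*E)=X\times_Y\bb P(E)$ is the projective bundle of $f^*E$, so that $g$ is surjective, $g^*\cal O_{\bb P(E)}(1)=\cal O_{\bb P(f^*E)}(1)$, and source and target are normal projective. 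A curve in $\bb P(f^*E)$ is $g$-contracted exactly when $\pi'$ carries it isomorphically onto an $f$-contracted curve, so $\text{NF}(g)=(\pi')^{-1}\text{NF}(f)$; moreover $g$ is equidimensional if $f$ is, and $\bb P(E)$ is smooth if $Y$ is. Since $\pi'$ is surjective and $g^{-1}(Z)=X\times_Y Z$ for $Z\subseteq\bb P(E)$, applying $\pi'$ to the line-bundle identities
\[
\bb B_+(g^*L)=g^{-1}\bb B_+(L)\cup\text{NF}(g)\qquad\text{and}\qquad\bb B_-(g^*L)=g^{-1}\bb B_-(L)
\]
and invoking~\cite{BKKMSU} on $Y$ yields parts (1) and (2). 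So it suffices to prove these two identities for $L=\cal O_{\bb P(E)}(1)$, and it is convenient to allow $L$ an arbitrary line bundle.

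For $\bb B_+$: if $L$ is not big then both sides are all of $\bb P(f^*E)$; if $g$ is not generically finite, then $g^*L$ is not big --- writing $g^*L\sim_\QQ A''+E''$ with $A''$ ample and $E''\geq 0$ and restricting to a general, positive-dimensional, $g$-contracted fibre $F$ would force $A''|_F\sim_\QQ -E''|_F$, which is absurd --- and $\text{NF}(g)=\bb P(f^*E)$, so again both sides agree. Otherwise choose a decomposition $L\sim_\QQ A+N$ with $A$ ample, $N\geq 0$ and $\Supp(N)=\bb B_+(L)$; then $g^*L\sim_\QQ g^*A+g^*N$ with $g^*A$ nef and big, whence $\bb B_+(g^*L)\subseteq\bb B_+(g^*A)\cup g^{-1}\Supp(N)$. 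By Nakamaye's theorem (characteristic zero), $\bb B_+(g^*A)=\Null(g^*A)$, which, $A$ being ample, is the union of the subvarieties along which $g$ is not generically finite, that is $\text{NF}(g)$; this gives ``$\subseteq$''. For the reverse, $g^{-1}\bb B_+(L)\subseteq\bb B_+(g^*L)$ holds for any surjective $g$: a Stein factorization reduces it to the finite case (immediate) and to the case $g_*\cal O=\cal O$, where $H^0(mg^*L)=H^0(mL)$ makes the morphism defined by $|mg^*L|$ factor as $g$ followed by the one defined by $|mL|$, so finiteness of the former near $x$ forces finiteness of the latter near $g(x)$. Lastly $\text{NF}(g)\subseteq\bb B_+(g^*L)$ since $g^*L$ has degree zero on a contracted curve, hence non-big restriction, and $\bb B_+$ contains every subvariety with non-big restriction. (Alternatively, part~(1) can be proved directly on $X$ via Theorem~\ref{thm:interpretations}, once one observes that Nakamaye's theorem identifies $\text{NF}(f)$ with $\bb B_+(f^*A)$ for any ample $A$ on $Y$.)

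For $\bb B_-$: write $\bb B_-(M)=\bigcup_{A\text{ ample}}\bb B(M+A)$. The inclusion $\bb B_-(g^*L)\subseteq g^{-1}\bb B_-(L)$ needs no hypothesis: if $g(x)\notin\bb B(L+A)$ for every ample $A$, then $x\notin\bb B(g^*L+g^*A)$ for every such $A$, and since $g^*A+\delta H$ is ample on $\bb P(f^*E)$ for $H$ ample and $\delta>0$ small, $x\notin\bb B\bigl(g^*L+(g^*A+\delta H)\bigr)$, hence $x\notin\bb B_-(g^*L)$. The reverse inclusion $g^{-1}\bb B_-(L)\subseteq\bb B_-(g^*L)$ is the crux of part~(2), and this is where the hypothesis enters: after a Stein factorization one may assume $g$ has connected fibres, and when $g$ is equidimensional (equivalently $f$ is) or $\bb P(E)$ is smooth, the sheaves $g_*\cal O_{\bb P(f^*E)}(mA')$ with $A'$ ample and $m\gg 0$ are well behaved --- locally free away from codimension two, with formation commuting with base change --- so that a section of $m(g^*L+A')$ non-vanishing at $x$, for $A'$ chosen comparable to $g^*A$, descends to a section of $m(L+A)$ non-vanishing at $g(x)$, giving $g(x)\notin\bb B_-(L)$.

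I expect this last descent to be the principal obstacle. It is precisely here that one needs either equidimensionality of $f$ (to rule out $g$-exceptional divisors and to control the pushforward sheaves in families) or smoothness of $Y$, hence of $\bb P(E)$ (so that those torsion-free pushforwards on a regular base may be compared with line bundles in codimension one). Without such a hypothesis the identity fails: the pulled-back diminished base locus can pick up extra components supported over the non-equidimensional locus of $f$, which is the kind of pathological behaviour the paper is careful to delimit.
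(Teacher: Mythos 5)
Your reduction to line bundles via the fibre square $\bb P(f^*E)\to\bb P(E)$ and \cite{BKKMSU} matches the paper's first step, and the $\bb B_+$ argument is broadly in the right spirit (Nakamaye for $\text{NF}(g)\subseteq\bb B_+(g^*L)$, Stein factorization, and a decomposition $L\sim_\QQ A+N$ for the forward inclusion). But there is a genuine gap in part (2), and it is precisely where you yourself flag the ``principal obstacle.''

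The descent you sketch does not work. If $A'$ is ample on the source, a section $s\in H^0\bigl(\bb P(f^*E),\,m(g^*L+A')\bigr)$ corresponds via the projection formula to a section of $mL\otimes g_*\cal O(mA')$ on $\bb P(E)$; that pushforward is a coherent sheaf of rank $h^0$ of the generic fibre, not a line bundle, and ``non-vanishing at $x$'' of $s$ gives no non-vanishing statement for any section of $m(L+A)$. Choosing $A'$ ``comparable to $g^*A$'' does not help, because $g^*A$ is nef but not ample when $g$ contracts something, and $H^0$ of an ample on the source is strictly bigger than $H^0$ of any pullback. Base-change/local-freeness in codimension two says nothing about producing a single section downstairs. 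Also, for the equidimensional case a Stein factorization is the wrong reduction: after $X\to Z\to Y$ with $Z\to Y$ finite, the fibre-space piece $X\to Z$ is where the hypothesis would have to bite, and you have lost smoothness of $Z$.

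The paper's route is quite different. It first covers $X$ by subvarieties $Z$ of dimension $\dim Y$ surjecting onto $Y$ (complete intersections), so that it suffices to treat $f$ generically finite; in the equidimensional case one can moreover arrange each $Z$ finite over $Y$ through every point (a refinement of \cite{FLmorphisms}*{Lemma 4.9}). The finite case between normal varieties is then handled by passing to a Galois closure and pushing forward a general member of $|f^*(mqL+mA)|$ to get an effective divisor $\sim|G|(mqL+mA)$ on $Y$ missing $f(x)$. The birational case with $Y$ smooth is handled by writing $f^*A=H+F$ with $H$ ample, considering $H_t=f^*A-tF$, and letting $t\to 0$ to show the asymptotic order of $|qL+A|_\QQ$ at $x$ is zero, which by $f_*\cal O_X=\cal O_Y$ descends to an order-zero statement at $f(x)$ and, via \cite{ELMNP}*{Proposition 2.8} (this is exactly where smoothness of $Y$ is used), gives $f(x)\notin\bb B_-(L)$. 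The general case (1) is then assembled by flattening and resolving. None of these mechanisms appear in your sketch, and I do not see how to make the pushforward-of-sections idea into a proof.

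One smaller point on part (1): your characterization of $x\notin\bb B_+$ by ``finiteness of the morphism defined by $|mg^*L|$'' is not the standard one; one needs $|mL-A|$ for an auxiliary ample $A$, or equivalently $\bb B_+(L)=\bb B(L-\tfrac1mA)$ for $m\gg 0$. The paper side-steps this by using $\bb B_+(L)=\bb B_-(L-\tfrac1mA)$ and then invoking the already-proved $\bb B_-$ pullback statement, which is tidier.
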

	
	\noindent In their generality, these results appear new even for line bundles. The focus in the literature has been on finite or birational morphisms. For instance \cite{DL21} prove the finite case of $(1)$ and $(2)$ for line bundles (or $\bb Q$-divisors) when $X$ and $Y$ are smooth using complex geometric methods, while \cite{BBP} prove with algebraic methods the birational case of $(1)$ for line bundles. Our methods are also algebraic. Furthermore we also consider $\bb R$-twisted bundles.
	\smallskip
	
	Finally we observe a pathology of augmented and diminished base loci. A vector bundle $E$ is ample (resp.~ nef) if and only if $\bb B_+(E)=\emptyset$ (resp.~$\bb B_-(E)=\emptyset$). This motivates calling $\bb B_+(E)$ (resp.~$\bb B_-(E)$) the \emph{nonample} (resp.~\emph{nonnef}) locus of $E$. In \cite{FM1} it is proved that an extension of ample (resp.~nef) coherent sheaves is again ample (resp.~nef). 
	In Lemma \ref{lem:directsums} we show that augmented and diminished base loci are ``additive'' for direct sums.
	These suggest that positivity of vector bundles does not worsen in extensions. Unfortunately this is not the case. 
	
	We construct Examples \ref{ex:B-huh} and \ref{ex:B+huh}
	of short exact sequences $0\to E'\to E\to E''\to 0$ on surfaces where $\bb B_{\pm}(E)\not\subset\bb B_{\pm}(E')\bigcup\bb B_{\pm}(E'')$. We have not found any example where $E'$ and $E''$ are V-big (resp.~V-psef), but $E$ is not. On the other hand, a bundle that contains an L-big bundle as a subsheaf is necessarily L-big (cf.~Proposition \ref{prop:subLbig}).
	\smallskip
	
	The last section is more informal. We explain how we can define cones of V-big and V-psef vector bundles inside the real numerical ring $N^*(X)$ of a complex projective manifold. 
	
	
	
	
	

	\subsection*{Acknowledgments} 
	We thank C.~Jager, S.~Kov\' acs, E.~Mistretta, T.~Murayama for useful conversations. The first named author was partially supported by the Simons Foundation Collaboration Grant 579353.

	\section{Augmented and diminished base loci}
	
	\subsection{Preliminaries}
	Let $X$ be a projective variety over an algebraically closed field of characteristic zero and let $E$ be a locally free sheaf (or
	vector bundle) over $X$. For a point $x \in X$, $E_x = E \otimes_{\mathcal{O}_X}\mathcal{O}_{X,x}$ denotes the stalk of $E$ at the point $x$ and
	$E(x) = E \otimes_{\mathcal{O}_X}\kappa(x)$ the fiber of $E$ at $x$, where $\kappa(x)$ is the residue field of $x$. Let $\pi:\bb P(E)\to X$ be the projective bundle of rank 1 quotients of the fibers $E(x)$. Let $\cal O_{\bb P(E)}(1)$ be the relative Serre bundle. We have $\pi_*\cal O_{\bb P(E)}(m)=S^mE$, where we write $S^mE=\Sym^mE$ for short. Denote $\xi\coloneqq c_1(\cal O_{\bb P(E)}(1))$. Also denote $T^nE\coloneqq\bigotimes^nE=E^{\otimes n}$.
	
	\begin{definition}[Base loci]
		The \bf base locus \rm of $E$ is
		$$\text{Bs}(E) =\{ x \in X \mid H^0 (X, E)\otimes\cal O_X\overset{ev}{\rightarrow} E(x) \text { is not surjective } \},$$
		the support of the cokernel $Q$ of the evaluation map $ev$. The \bf stable base locus \rm of $E$ is 
		$$\mathbb{B}(E)=\bigcap\limits_{m>0}\text{Bs}(S^mE).$$
		It is equal to $\text{Bs}(S^mE)$ for all sufficiently divisible positive $m$.
		Let $A$ be any ample line bundle on $X$. Then the \bf augmented base locus \rm of $E$ is
		$$\mathbb{B}_+(E)=\bigcap_{q\in\bb N}\bb B(S^qE\otimes A^{-1})=\bigcap\limits_{q\in \mathbb{N}}\bigcap\limits_{m\in \mathbb{N}}\text{Bs}(S^{m}S^{q}E\otimes A^{-m}).$$
		The \bf diminished base locus \rm of $E$ is
		$$ \mathbb{B}_-(E)=\bigcup\limits_{q\in\bb N}\bb B(S^qE\otimes A)=\bigcup\limits_{q\in \mathbb{N}}\bigcap\limits_{m\in \mathbb{N}}\text{Bs}(S^{m}S^qE\otimes A^m).$$
	\end{definition}
	
	\noindent The definitions are independent of the choice of the polarization $A$. Note that $\mathbb{B}_+(E)$ is always a closed subset of $X$ but already for line bundles it is not known whether $\mathbb{B}_-(E)$ is closed. \cite{Le} gives an example of an $\bb R$-divisor class whose diminished base locus is dense and not closed, Again, no such example of a Cartier $\bb Z$-divisor seems to be known.
	
	For line bundles $L$, the augmented and diminished base loci are homogeneous numerical invariants that are furthermore well defined on the real N\' eron--Severi space $N^1(X)$.
	
	\begin{definition}\label{VandLPositiveDefinition}
		Say that $E$ is 
		\begin{itemize}
			\item \bf V-big (or Viehweg-big) \rm if  $\mathbb{B}_+(E) \neq X$.
			\item \bf L-big \rm if $\mathcal{O}_{\mathbb{P}(E)}(1)$ is big on $\mathbb{P}(E)$, i.e., $\mathbb{B}_+(\mathcal{O}_{\mathbb{P}(E)}(1))\neq \mathbb{P}(E)$.
			\item \bf V-psef (or Viehweg-pseudo-effective) \rm if $\mathbb{B}_-(E) \neq X$.
			\item \bf L-psef \rm if $\mathcal{O}_{\mathbb{P}(E)}(1)$ is pseudo-effective on $\mathbb{P}(E)$, i.e., $\mathbb{B}_-(\mathcal{O}_{\mathbb{P}(E)}(1))\neq \mathbb{P}(E)$.
		\end{itemize}
		We use {\bf V-positive} as a fixed choice of either V-big or V-psef. Similarly we use {\bf L-positive}.
	\end{definition}
	
	\noindent Following \cite{BKKMSU}, since the diminished base locus is not known to be closed, we can also define {\bf weak positivity} by the requirement that the diminished base locus is not dense in $X$ (resp.~$\bb P(E)$). This positivity notion is weaker than bigness and possibly stronger than pseudo-effectivity. We will not use it in our work.

	\begin{remark}\label{remark:easybase}
		Let $E$ and $F$ be two vector bundles. Then  
		\begin{enumerate}[(1)]
			\item $\text{Bs}(E\oplus F)=\text{Bs}(E)\bigcup \text{Bs} (F)$;
			\item $\text{Bs}(E\otimes F)\subseteq\text{Bs}(E)\bigcup\text{Bs}(F)$;
			\item $\text{Bs}(E)\supseteq\text{Bs}(E^{\otimes m})\supseteq\text{Bs}(S^mE)$ for $m\geq 1$.
		\end{enumerate}
	\end{remark}

	\begin{lemma}\label{lem:quotients}
		Let $f:E\rightarrow F$ be a map between two vector bundles, surjective over an open subset $U\subseteq X$. Then
		\[\text{Bs}(F)\cap U\subseteq\text{Bs}(E)\cap U\quad\text{ and }\quad\bb B(F)\cap U\subseteq\bb B(E)\quad\text{ and }\quad\bb B_{\pm}(F)\cap U\subseteq\bb B_{\pm}(E)\cap U\] In particular, if $E$ is V-positive, then $F$ is also V-positive.
	\end{lemma}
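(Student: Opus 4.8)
The plan is to reduce every stated inclusion to the first one, $\text{Bs}(F)\cap U\subseteq\text{Bs}(E)\cap U$, which is a one-line diagram chase. I would read the hypothesis as: the restriction $f|_U\colon E|_U\to F|_U$ is a surjection of sheaves, equivalently (Nakayama, the cokernel of $f$ being coherent) the fibre map $f(x)\colon E(x)\to F(x)$ is surjective for each $x\in U$. Now fix $x\in U$ with $x\notin\text{Bs}(E)$. Naturality of the evaluation morphism gives a commutative square with top row $H^0(X,f)\colon H^0(X,E)\to H^0(X,F)$, bottom row $f(x)\colon E(x)\to F(x)$, and the two vertical arrows the evaluations at $x$. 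The left vertical is surjective since $x\notin\text{Bs}(E)$, and the bottom arrow is surjective since $x\in U$; hence the composite $H^0(X,E)\to F(x)$ is surjective, and as it factors through $H^0(X,F)\to F(x)$, the latter is surjective, i.e.\ $x\notin\text{Bs}(F)$.

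The second step is to note that ``surjective over $U$'' is inherited by all the bundles appearing in the definitions of $\bb B$ and $\bb B_{\pm}$. Tensoring by a line bundle $L$ is exact, so $E\otimes L\to F\otimes L$ is surjective over $U$; and $S^m$ carries a surjection of vector spaces to a surjection, so $S^mf$ is surjective on fibres over $U$, hence surjective over $U$. Iterating, the morphisms $S^mS^qE\otimes A^{\pm m}\to S^mS^qF\otimes A^{\pm m}$ and $S^mE\to S^mF$ are all surjective over $U$, so the first step gives
\[\text{Bs}(S^mS^qF\otimes A^{\pm m})\cap U\subseteq\text{Bs}(S^mS^qE\otimes A^{\pm m})\cap U\quad\text{and}\quad\text{Bs}(S^mF)\cap U\subseteq\text{Bs}(S^mE)\cap U\]
for all $m,q\geq 1$. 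Intersecting and unioning over $m,q$ exactly as in the definitions then yields $\bb B(F)\cap U\subseteq\bb B(E)\cap U$, $\bb B_+(F)\cap U\subseteq\bb B_+(E)\cap U$, and $\bb B_-(F)\cap U\subseteq\bb B_-(E)\cap U$.

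For the last assertion I would take $U=X$, the case where $F$ is an actual quotient of $E$, so that the above reads $\bb B_{\pm}(F)\subseteq\bb B_{\pm}(E)$, whence $\bb B_{\pm}(E)\neq X$ forces $\bb B_{\pm}(F)\neq X$. (For V-bigness it is in fact enough that $U$ be dense, since then the nonempty open set $X\setminus\bb B_+(E)$ meets $U$; for V-psef I would not weaken $U=X$, since $\bb B_-$ is not known to be closed, so one cannot argue that its complement is dense.) I do not anticipate a real obstacle here: the only points worth a sentence of justification are the Nakayama equivalence between surjectivity of a morphism of vector bundles at $x$ and surjectivity of its fibre map at $x$, and the commutativity of the sections-versus-fibres square. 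What one must keep in mind throughout is that the entire argument is local over $U$ and hence controls the base loci only inside $U$, which is exactly why the concluding statement is cleanest for a genuine surjection.
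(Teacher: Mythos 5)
Your proof is correct and follows exactly the same outline the paper uses, only written out in full: the paper's argument is the three-sentence ``The first inclusion is clear. Tensor products and symmetric powers preserve surjections. The other inclusions then follow from the first,'' and you have supplied precisely the missing details — the Nakayama/fibre-map reformulation of surjectivity, the commutative square of evaluation maps for the first inclusion, the observation that $S^m(-)\otimes A^{\pm m}$ preserves surjectivity over $U$, and the passage to $\bb B$, $\bb B_\pm$ by intersecting/unioning over $m,q$. So there is no methodological divergence to report.

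One remark worth keeping: your caveat at the end is sharper than the paper's terse ``In particular.'' You correctly observe that when $U\subsetneq X$ the inclusion $\bb B_-(F)\cap U\subseteq\bb B_-(E)\cap U$ does not by itself let you deduce $\bb B_-(F)\neq X$ from $\bb B_-(E)\neq X$, because $X\setminus\bb B_-(E)$ is not known to be open (or even dense), and so it may miss $U$ entirely; the V-big case is fine since $X\setminus\bb B_+(E)$ is a nonempty open in an irreducible $X$ and hence meets any nonempty $U$. The paper does not address this, but every application of the lemma in the paper (Examples \ref{ex:B-huh} and \ref{ex:B+huh}, Lemma \ref{lem:directsums}, Corollary \ref{cor:homogeneousB}) takes $U=X$, so nothing downstream is affected. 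Your instinct to restrict the V-psef conclusion to $U=X$ is the right one.
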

		\begin{proof}The first inclusion is clear. Tensor products and symmetric powers preserve surjections. The other inclusions then follow from the first.
		\end{proof}

		\begin{example}
			If $X$ is a curve and $\bb B_-(E)\neq X$, then $E$ is nef, so $\bb B_-(E)=\emptyset$. Similarly, if $\bb B_+(E)\neq X$, then $E$ is ample and $\bb B_+(E)=\emptyset$. In particular on curves V-bigness is equivalent to ampleness and V-pseudo-effectivity is equivalent to nefness.
			(Indeed if some $x$ is not in $\bb B_-(E)$, then for all $q\geq 1$ there exists $m=m(E,q)$ such that $S^{mq}E\otimes A^m$ is globally generated at $x$. Since $X$ is a curve, it follows that $S^{mq}E\otimes A^m$ is nef, for instance by \cite{L2}*{Example 6.4.17}. By using the relative Veronese embedding $\bb P(E)\subset\bb P(S^{mq}E)$, we deduce that $mq\xi+m\pi^*A$ is nef for all $q\geq 1$. Since the nef cone of $\bb P(E)$ is closed, we obtain that $\xi$ is nef, thus so is $E$. See also \cite{L2}*{Example 6.2.13}.)
			\qed
		\end{example}
		
		\begin{remark}
			The situation for L-bigness is very different. A direct sum $E=\bigoplus_{i=1}^rB_i$ of line bundles on a curve is L-big if and only if one of the $B_i$ is ample, and it is V-big if and only if all the $B_i$ are ample.
		\end{remark}

		\subsection{Base loci via tensor powers}\label{sec:baselocitensorpowers}
		In general $S^{mn}E\not\simeq S^mS^nE\not\simeq S^nS^mE$. This makes it nontrivial to translate between positivity properties of $E$ and positivity properties of $S^nE$.
		This issue is also observed in \cite{MU}. To bypass it, we use tensor powers $T^nE=\bigotimes^nE=E^{\otimes n}$. These do satisfy $T^{mn}E\simeq T^nT^mE\simeq T^mT^nE$. The price to pay is that they are not related by one geometric object like $\bb P(E)$. 
		
		\begin{definition}\label{definition_of_bl_tensor_power}
			Put $\bb B^{\otimes}(E)\coloneqq\bigcap_{m\geq 1}\text{Bs}(T^mE)$. For $A$ ample put 
			\[\bb B_{+,A}^{\otimes}\coloneqq\bigcap_{q\geq 1}\bigcap_{m\geq 1}\text{Bs}(T^{mq}E\otimes A^{-m})=\bigcap_{q\geq 1}\bb B^{\otimes}(T^qE\otimes A^{-1})\]
			and 
			\[\bb B^{\otimes}_{-,A}(E)\coloneqq\bigcup_{q\geq 1}\bigcap_{m\geq 1}\text{Bs}(T^{mq}E\otimes A^m)=\bigcup_{q\geq 1}\bb B^{\otimes}(T^qE\otimes A).\]
		\end{definition}
		
		\noindent For sufficiently divisible $m$ we have that $\bb B^{\otimes}(E)=\text{Bs}(T^mE)$ by noetherianity. In particular $\bb B^{\otimes}(E)=\bb B^{\otimes}(T^nE)$ for all $n\geq 1$.
		Due to the natural surjections $T^{mq}E\to S^mS^qE$, we have $\bb B(S^qE)\subseteq\bb B^{\otimes}(T^qE)=\bb B^{\otimes}(E)$.
		Furthermore $\bb B_{\pm}(E)\subseteq\bb B^{\otimes}_{\pm,A}(E)$ by Lemma \ref{lem:quotients}.
		We will prove that (at least in characteristic 0) the inclusion is an equality. First we check that the definition is independent of $A$.
		
		\begin{remark}\label{remark:independentofample}
			If $A$ and $B$ are ample divisors, then $\bb B_{\pm,A}^{\otimes}(E)=\bb B_{\pm,B}^{\otimes}(E)$.
			(Fix $p$ such that $pB-A$ is globally generated. For $q\geq 0$ fix $m=m(q)$ sufficiently divisible such that $\text{Bs}(T^{m\cdot(pq)}E\otimes A^m)=\bigcap_{m\geq 1}\text{Bs}(T^{m\cdot (pq)}E\otimes A^m)$ and $\text{Bs}(T^{(mp)q}E\otimes B^{mp})=\bigcap_{m\geq 1}\text{Bs}(T^{mq}E\otimes B^m)$. Then $\text{Bs}(T^{mpq}E\otimes B^{mp})\subseteq \text{Bs}(T^{m(pq)}E\otimes A^m)$ by Remark \ref{remark:easybase}.  
			Taking unions over all $q$, we obtain $\bb B^{\otimes}_{-,B}(E)\subseteq\bb B^{\otimes}_{-,A}(E)$. The reverse inclusion follows by swapping $A$ and $B$. The case of $\bb B_+$ is similar and also done in \cite{FM2}.)
		\end{remark}
		
		\begin{definition}
			Denote then the common value $\bb B_{\pm}^{\otimes}(E)\coloneqq\bb B^{\otimes}_{\pm,A}(E)$.
		\end{definition}

		\begin{lemma}\label{lem:baselociviatensorpowers}$\bb B_{\pm}(E)=\bb B_{\pm}^{\otimes}(E)$.
		\end{lemma}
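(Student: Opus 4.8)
The inclusion $\bb B_{\pm}(E)\subseteq\bb B_{\pm}^{\otimes}(E)$ is already recorded above, so the plan is to prove the reverse inclusion $\bb B_{\pm}^{\otimes}(E)\subseteq\bb B_{\pm}(E)$. Unwinding the definitions, for $\bb B_{+}$ I would need: if $x\notin\bb B_{+}(E)$, then some twisted tensor power $T^{mq}E\otimes A^{-m}$ is globally generated at $x$; for $\bb B_{-}$ the target is: if $x\notin\bb B_{-}(E)$, then for every $q\ge 1$ some $T^{mq}E\otimes A^{m}$ is globally generated at $x$. I would run both arguments in parallel, the only change being the sign of the $A$-twist.

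The heart of the matter is a characteristic-zero statement about Schur functors, which I would isolate as a sub-claim: writing $r$ for the rank of $E$, for every $d\ge 1$ and every $t\ge r$ the tensor power $T^{dt}E$ is a direct summand of a finite direct sum of copies of $(S^{d}E)^{\otimes t}$. To prove this I would use Schur--Weyl duality to write $T^{N}E\cong\bigoplus_{\lambda\vdash N}(\mathbb{S}_{\lambda}E)^{f_{\lambda}}$, with $f_{\lambda}$ the number of standard tableaux of shape $\lambda$, together with the decomposition $(S^{d}E)^{\otimes t}\cong\bigoplus_{\lambda\vdash dt}(\mathbb{S}_{\lambda}E)^{K_{\lambda,(d^{t})}}$ into Schur functors with Kostka-number multiplicities; in both sums the terms with $\ell(\lambda)>r$ vanish. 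Since $K_{\lambda,\mu}>0$ exactly when $\lambda\trianglerighteq\mu$ in the dominance order, it then suffices to check that any $\lambda\vdash dt$ with $\ell(\lambda)\le t$ satisfies $\lambda\trianglerighteq(d^{t})$: for $j\ge\ell(\lambda)$ both $j$-th partial sums equal $dt$, while for $1\le j<\ell(\lambda)$ one has $\lambda_{1}+\dots+\lambda_{j}\ge\frac{j}{\ell(\lambda)}\,dt\ge dj$, because the average of the $j$ largest parts is at least the overall average and $\ell(\lambda)\le t$. Hence for $t\ge r$ every Schur functor occurring in $T^{dt}E$ occurs in $(S^{d}E)^{\otimes t}$, and taking enough copies of the latter realizes $T^{dt}E$ as a direct summand. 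I expect this combinatorial comparison of Kostka data to be the one genuinely substantive point; everything after it is bookkeeping with Remark \ref{remark:easybase}.

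Granting the sub-claim, here is how I would finish the $\bb B_{+}$ case. If $x\notin\bb B_{+}(E)$, then $x\notin\bb B(S^{q_{0}}E\otimes A^{-1})$ for some $q_{0}\ge 1$, so $S^{m_{0}}(S^{q_{0}}E)\otimes A^{-m_{0}}=S^{m_{0}}\bigl(S^{q_{0}}E\otimes A^{-1}\bigr)$ is globally generated at $x$ for some $m_{0}\ge 1$. In characteristic zero $S^{m_{0}q_{0}}E$ is a direct summand of $S^{m_{0}}(S^{q_{0}}E)$ (the multiplication map splits by semisimplicity), so the bundle $S^{d}E\otimes A^{-e}$ is globally generated at $x$, where $d:=m_{0}q_{0}$ and $e:=m_{0}$; note $d=q_{0}e$. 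By Remark \ref{remark:easybase} the tensor power $(S^{d}E\otimes A^{-e})^{\otimes t}=(S^{d}E)^{\otimes t}\otimes A^{-et}$ is globally generated at $x$ for every $t$, hence so is any finite direct sum of copies of it, hence---taking $t\ge r$ and applying the sub-claim---so is its direct summand $T^{dt}E\otimes A^{-et}$. Setting $m:=et$, this says $T^{mq_{0}}E\otimes A^{-m}$ is globally generated at $x$, so $x\notin\text{Bs}(T^{mq_{0}}E\otimes A^{-m})\supseteq\bb B_{+}^{\otimes}(E)$, and therefore $x\notin\bb B_{+}^{\otimes}(E)$, as wanted.

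The $\bb B_{-}$ case should go through verbatim with $A^{-1}$ replaced by $A$: if $x\notin\bb B_{-}(E)$, then for each $q\ge 1$ we have $x\notin\bb B(S^{q}E\otimes A)$, so $S^{m_{0}}(S^{q}E)\otimes A^{m_{0}}$ is globally generated at $x$ for some $m_{0}$; its summand $S^{d}E\otimes A^{e}$ is too, with $d:=m_{0}q$, $e:=m_{0}$, $d=qe$; for $t\ge r$ the direct summand $T^{dt}E\otimes A^{et}$ of $(S^{d}E\otimes A^{e})^{\otimes t}$ is then globally generated at $x$; and with $m:=et$ this reads: $T^{mq}E\otimes A^{m}$ is globally generated at $x$, i.e.\ $x\notin\bb B^{\otimes}(T^{q}E\otimes A)$. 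Since $q$ was arbitrary, $x\notin\bb B_{-}^{\otimes}(E)$, which completes the reverse inclusion.
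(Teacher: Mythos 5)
Your proof is correct, and although it travels in the same conceptual neighborhood as the paper's (Schur--Weyl decomposition of tensor powers in characteristic zero, realizing Schur functors as direct summands of products of symmetric powers), the combinatorial device at its core is genuinely different and, arguably, cleaner. The paper uses iterated Pieri to exhibit $\bb S_\lambda E$ (for $\lambda\vdash Mq$, $\ell(\lambda)\leq r$) as a summand of the \emph{mixed} product $S^{\lambda_1}E\otimes\cdots\otimes S^{\lambda_r}E$, then performs division with remainder $\lambda_i=a_i(mq)+b_i$ to peel off copies of the positive bundle $S^{mq}E$, and must absorb the finitely many leftover pieces $S^{b_1}E\otimes\cdots\otimes S^{b_r}E$ by a uniform auxiliary twist --- which is why it invokes the change-of-ample trick $\bb B_{-,A}^{\otimes}=\bb B_{-,2A}^{\otimes}$ and lands on $T^{Mq}E\otimes A^{2M}$ rather than $T^{Mq}E\otimes A^{M}$. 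Your route replaces all of that with a single dominance-order observation: for $t\geq r$ and $\lambda\vdash dt$ with $\ell(\lambda)\leq r$, one has $\lambda\trianglerighteq(d^t)$, so by the Kostka criterion $\bb S_\lambda E$ already occurs in the \emph{pure} power $(S^{d}E)^{\otimes t}$; thus $T^{dt}E$ is a summand of a direct sum of copies of $(S^{d}E)^{\otimes t}$. The remainder bookkeeping and the $A\mapsto 2A$ adjustment disappear, and the twist comes out exactly as $A^{et}$ with $d=qe$, giving $T^{mq}E\otimes A^{\pm m}$ on the nose. Both arguments also use that $S^{m_0q_0}E$ is a direct summand (not merely a subsheaf) of $S^{m_0}(S^{q_0}E)$ in characteristic zero, which the paper uses implicitly as well; your proof states this correctly and it is needed, since a subsheaf of a globally generated sheaf need not be globally generated. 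Finally, your version handles $\bb B_+$ and $\bb B_-$ uniformly, whereas the paper defers $\bb B_+$ to \cite{FM2}; the averaging inequality $\lambda_1+\dots+\lambda_j\geq\frac{j}{\ell(\lambda)}dt\geq dj$ that you give for the dominance check is correct.
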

		\begin{proof}
			\cite{FM2} proved the statement about augmented base loci. The argument for the diminished base loci is similar as we show here.
			Since $S^{m}S^qE$ is a quotient of $T^{mq}E$, we deduce that $\bb B_-(E)\subseteq \bb B^{\otimes}_-(E)$ by Remark \ref{remark:easybase}. Conversely, assume that $x\not\in \bb B_{-}(E)$. Then for every $q\geq 1$ there exists $m=m(q)\geq 1$ such that $S^{m}S^qE\otimes A^m$ is generated by global sections at $x$. In particular $S^{mq}E\otimes A^m$ is also generated by global sections at $x$. Using $\bb B_{-,A}^{\otimes}(E)=\bb B_{-,2A}^{\otimes}(E)$ from Remark \ref{remark:independentofample}, we want to prove that for all $q$ we can find $M=M(q)$ such that $T^{M\cdotp q}E\otimes A^{2M}$ is globally generated at $x$. 
			Since we restrict ourselves to characteristic zero, we have a direct sum decomposition \[T^{Mq}E=\bigoplus_{\lambda}\bb S_{\lambda}E\]
			where $\bb S_{\lambda}E$ denotes the Schur functor corresponding to the partition $\lambda=(\lambda_1\geq\lambda_2\geq\ldots)$ with $\lambda_1+\lambda_2+\ldots=Mq$. Put $r\coloneqq {\rm rk}\, E$. If $\lambda_{r+1}>0$, then $\bb S_{\lambda}E=0$. Thus we can restrict our attention to partitions $\lambda$ with at most $r$ nonzero parts $\lambda_i$.
			In characteristic zero we have that $\bb S_{\lambda}E$ is also a direct summand of $S^{\lambda_1}E\otimes\ldots\otimes S^{\lambda_r}E$ by Pieri's rule (\cite{FH}*{(6.8) p.~79)}).
			
			Write $\lambda_i=a_i(mq)+b_i$ with $0\leq b_i<mq$. Then $S^{\lambda_1}E\otimes\ldots\otimes S^{\lambda_r}E$ is a direct summand of $S^{a_1mq}E\otimes\ldots\otimes S^{a_rmq}E\otimes(S^{b_1}E\otimes\ldots\otimes S^{b_r}E)$ by Pieri's rule.
			Let $N=N(q)$ be sufficiently large so that $S^{b_1}E\otimes\ldots\otimes S^{b_r}E\otimes A^n$ is globally generated for all possible $(b_1,\ldots,b_r)\in\{0,\ldots,mq-1\}^r$ and all $n\geq N$. We have
			\[S^{a_1mq}E\otimes\ldots\otimes S^{a_rmq}E\otimes(S^{b_1}E\otimes\ldots\otimes S^{b_r}E)\otimes A^{2M}=\bigl(\bigotimes_{i=1}^r(S^{a_imq}E\otimes A^{a_im})\bigr)\otimes(S^{b_1}E\otimes\ldots\otimes S^{b_r}E\otimes A^{2M-m\sum_ia_i}).\]
			Since $2M-m\sum_ia_i=M+\frac 1q\sum_ib_i\geq M$, any value $M\geq N$ will do.
		\end{proof}

		\begin{remark}\label{remark3.8}\label{remark:basesimplesym}Similarly
			\[\bb B_{-}(E)=\bigcup_{q\geq 1}\bigcap_{m\geq 1}\text{Bs}(S^{mq}E\otimes A^m)\text{ and }\bb B_+(E)=\bigcap_{q\geq 1}\bigcap_{m\geq 1}\text{Bs}(S^{mq}E\otimes A^{-m}).\] 
			These are closer in spirit to \cite{MU}*{Definition 2.4}.
			(Since $S^{mq}E$ is a quotient of $S^mS^qE$, the RHS of both claimed equalities is clearly contained in the corresponding LHS. For fixed $q$, the intersection $\bigcap_{m\geq 1}\text{Bs}(S^{mq}E\otimes A^{\pm m})$ equals $\text{Bs}(S^{mq}E\otimes A^{\pm m})$ for sufficiently divisible $m$.
			If $x$ is not in the RHS, then for all $q\geq 1$ there exists $m$ such that $S^{mq}E\otimes A^{\pm m}$ is globally generated at $x$. As in the proof of Lemma \ref{lem:baselociviatensorpowers} (respectively using \cite{FM2} for the augmented base locus) we find $M$ such that $T^{Mq}E\otimes A^{\pm 2M}$ is globally generated at $x$, hence so is its quotient $S^MS^qE\otimes A^{\pm 2M}$.)
		\end{remark}
		
		\begin{corollary}[Homogeneity]\label{cor:homogeneousB}
			We have $\bb B^{\otimes}(E)=\bb B^{\otimes}(T^cE)$, and 
			\[\bb B_{\pm}(E)=\bb B_{\pm}(S^cE)=\bb B_{\pm}(T^cE)\] for all $c\geq 1$.
			In particular $T^cE\text{ is V-positive}\Leftrightarrow E\text{ is V-positive}\Leftrightarrow S^cE\text{ is V-positive}$ for any $c\geq 1$. 
		\end{corollary}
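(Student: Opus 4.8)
The plan is to obtain the corollary as a formal consequence of Lemma~\ref{lem:baselociviatensorpowers}, Remark~\ref{remark:basesimplesym}, Theorem~\ref{thm:interpretations} and Lemma~\ref{lem:quotients}, together with the observation recorded after Definition~\ref{definition_of_bl_tensor_power}. That observation is exactly the statement $\bb B^{\otimes}(E)=\bb B^{\otimes}(T^cE)$: since $T^{mc}E\simeq T^m(T^cE)$, both sides equal $\mathrm{Bs}(T^{mc}E)$ for any sufficiently divisible $m$. For the remaining equalities I will first invoke Lemma~\ref{lem:baselociviatensorpowers} to pass to the tensor-power versions, so that it suffices to prove $\bb B^{\otimes}_{\pm}(T^cE)=\bb B^{\otimes}_{\pm}(E)$, and then separately $\bb B_{\pm}(S^cE)=\bb B_{\pm}(E)$. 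Throughout I will use the pointwise criteria furnished by Theorem~\ref{thm:interpretations} and Remark~\ref{remark:basesimplesym} (and the diminished analogue established inside the proof of Lemma~\ref{lem:baselociviatensorpowers}): for a vector bundle $F$ one has $x\notin\bb B_+(F)$ iff $S^{mq}F\otimes A^{-m}$ (equivalently $T^{mq}F\otimes A^{-m}$) is globally generated at $x$ for some $m,q\geq 1$, and $x\notin\bb B_-(F)$ iff for every $q\geq 1$ there is an $m\geq 1$ with $S^{mq}F\otimes A^{m}$ (equivalently $T^{mq}F\otimes A^{m}$) globally generated at $x$.

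With these in hand, one inclusion in each claimed equality is a direct reindexing. Using $T^{mq}(T^cE)\simeq T^{mqc}E$, any generation witness for $T^cE$ at parameters $(m,q)$ is, after replacing $q$ by $qc$, a generation witness for $E$; this yields one containment between $\bb B_{\pm}(T^cE)$ and $\bb B_{\pm}(E)$. For $S^cE$ one argues similarly, now using the surjections $S^{mq}(S^cE)\twoheadrightarrow S^{mqc}E$ to push a generation witness for $S^cE$ down to one for $E$, and using Lemma~\ref{lem:quotients} applied to $T^cE\twoheadrightarrow S^cE$ to get $\bb B_{\pm}(S^cE)\subseteq\bb B_{\pm}(T^cE)$. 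For the remaining inclusions I expect to rerun the device from the proof of Lemma~\ref{lem:baselociviatensorpowers}: starting from a generation statement for $E$ (resp.~$S^cE$) at $(m,q)$, raise the relevant bundle to its $p$-th tensor power with $p$ chosen to be a large multiple of $c$ — generation at a point being preserved under tensor products and under quotients — and then tensor by a sufficiently positive, hence globally generated, power $A^{j}$, which only shrinks base loci. This manufactures a generation witness whose tensor exponent is divisible by $c$ and whose twist by $A$ is normalized, i.e.~a witness for $x\notin\bb B_{\pm}(T^cE)$ (resp.~$x\notin\bb B_{\pm}(S^cE)$), completing $\bb B_{\pm}(E)=\bb B_{\pm}(S^cE)=\bb B_{\pm}(T^cE)$.

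Finally, V-positivity of $E$, of $S^cE$ and of $T^cE$ are then visibly equivalent, since each is the assertion that the corresponding ($+$ or $-$) base locus is a proper subset of $X$. The one genuinely non-formal point is the twist-normalization step just described: the reindexing $q\mapsto qc$ does not interact well with the fixed polarization, because the inherited twist on $T^{mqc}E$ is a power $A^{\pm mc}$ rather than $A^{\pm m}$, and absorbing this discrepancy forces one to keep track of which exponents must be taken sufficiently divisible or sufficiently large. This is routine given the machinery already developed — it is the same trick used in Remark~\ref{remark:independentofample} and in Lemma~\ref{lem:baselociviatensorpowers} — but it is what keeps the corollary from being a one-line reindexing.
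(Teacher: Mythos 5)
Your proposal is correct and follows essentially the same route as the paper: pass to the tensor/symmetric-power characterizations of Lemma~\ref{lem:baselociviatensorpowers} and Remark~\ref{remark:basesimplesym}, exploit $T^{mq}(T^cE)\simeq T^{mqc}E$ together with the surjections $T^cE\twoheadrightarrow S^cE$ (to invoke Lemma~\ref{lem:quotients}) and $T^{mq}S^cE\twoheadrightarrow S^{(mc)q}E$, and absorb the resulting twist discrepancy with a globally generated power of $A$ as in Remark~\ref{remark:independentofample}. The paper compresses the $T^cE$ case into the phrase ``all homogeneity properties for $T^cE$ follow,'' so the twist-normalization point you flag at the end is exactly what is left implicit there.
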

		\begin{proof}Both $\bb B^{\otimes}(E)$ and $\bb B^{\otimes}(T^cE)$ equal $\text{Bs}(T^mE)$ for $m$ sufficiently divisible, in particular divisible by $c$. All homogeneity properties for $T^cE$ follow.
			From the surjections $T^cE\to S^cE$ we deduce by Lemma \ref{lem:quotients} that $\bb B_{\pm}(S^cE)\subseteq\bb B_{\pm}(T^cE)=\bb B_{\pm}(E)$. The reverse inclusion is a consequence of the surjections $T^{mq}S^cE\to S^{(mc)q}E$ and Remark \ref{remark:basesimplesym}.
		\end{proof}
		
		\begin{corollary}\label{cor:inclusionsvariousloci}
			We have inclusions
			\[\bb B_-(E)\subseteq\bb B(E)\subseteq\bb B_+(E).\]
			In particular, if $E$ is V-big, then it is also V-psef.
		\end{corollary}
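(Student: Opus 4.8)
The plan is to prove the two inclusions $\bb B_-(E)\subseteq\bb B(E)$ and $\bb B(E)\subseteq\bb B_+(E)$ separately, each time using the elementary facts that being \emph{globally generated at $x$} is inherited by symmetric powers and by quotients, and is preserved under tensoring with a globally generated line bundle. It is convenient to work with the descriptions from Remark \ref{remark:basesimplesym}, namely $\bb B_+(E)=\bigcap_{q,m\geq 1}\text{Bs}(S^{mq}E\otimes A^{-m})$ and $\bb B_-(E)=\bigcup_{q\geq 1}\bigcap_{m\geq 1}\text{Bs}(S^{mq}E\otimes A^{m})$. Throughout, the only relation between iterated symmetric powers that one invokes is the natural surjection $S^aS^bE\twoheadrightarrow S^{ab}E$ (never an isomorphism, since in general $S^{ab}E\not\simeq S^aS^bE$).

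For $\bb B(E)\subseteq\bb B_+(E)$ it suffices to show $\bb B(E)\subseteq\text{Bs}(S^{mq}E\otimes A^{-m})$ for every $q,m\geq 1$. Suppose $x\notin\text{Bs}(S^{mq}E\otimes A^{-m})$, i.e.\ $S^{mq}E\otimes A^{-m}$ is globally generated at $x$. Taking $\ell$-th symmetric powers and passing to the quotient $S^{\ell mq}E\otimes A^{-\ell m}$, the latter is globally generated at $x$ for every $\ell\geq 1$; choosing $\ell$ large enough that $A^{\ell m}$ is globally generated and twisting by it, $S^{\ell mq}E=(S^{\ell mq}E\otimes A^{-\ell m})\otimes A^{\ell m}$ is globally generated at $x$, so $x\notin\text{Bs}(S^{\ell mq}E)\supseteq\bb B(E)$. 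Intersecting the resulting containments over $q$ and $m$ yields $\bb B(E)\subseteq\bb B_+(E)$.

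For $\bb B_-(E)\subseteq\bb B(E)$, since $\bb B_-(E)$ is the union over $q$ of the sets $\bigcap_{m}\text{Bs}(S^{mq}E\otimes A^{m})$, it is enough to show each of these is contained in $\bb B(E)$. Fix $q$ and let $x\notin\bb B(E)$, so $S^kE$ is globally generated at $x$ for some $k$; taking a symmetric power we may enlarge $k$ so that in addition $A^k$ is globally generated. Then $S^{kq}E$, being a quotient of $S^q(S^kE)$, is globally generated at $x$, hence so is $S^{kq}E\otimes A^k$; thus $x$ lies outside the $m=k$ term of the intersection, and therefore outside $\bb B_-(E)$. This gives $\bb B_-(E)\subseteq\bb B(E)$, and with the previous paragraph, $\bb B_-(E)\subseteq\bb B(E)\subseteq\bb B_+(E)$.

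The last assertion is then immediate: if $E$ is V-big then $\bb B_+(E)\neq X$, whence $\bb B_-(E)\subseteq\bb B_+(E)\neq X$ and $E$ is V-psef. I do not expect any serious obstacle; the only points requiring care are that $S^{ab}E$ is merely a quotient of $S^aS^bE$, so every surjection must be oriented correctly, and that twisting into global generation by a power of $A$ forces one first to replace the exponent by a sufficiently divisible and large multiple so that the relevant power of $A$ is itself globally generated.
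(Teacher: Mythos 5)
Your proof is correct and follows essentially the same route as the paper: both arguments rest on the description of $\bb B_{\pm}(E)$ from Remark~\ref{remark:basesimplesym} and reduce each inclusion to the elementary observation that tensoring by a globally generated power of $A$ cannot enlarge the base locus. The only cosmetic difference is that the paper invokes noetherianity once to pick a single sufficiently divisible $n$ so that the chain $\mathrm{Bs}(S^{nq}E\otimes A^{n})\subseteq\mathrm{Bs}(S^{nq}E)\subseteq\mathrm{Bs}(S^{nq}E\otimes A^{-n})$ stabilizes all three loci simultaneously, whereas you unpack that stabilization by hand, replacing exponents with large multiples via $S^{\ell}(S^{k}E)\twoheadrightarrow S^{\ell k}E$ at each step.
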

		\begin{proof}
			For every $q\geq 1$ and very ample $A$ we can find $n$ sufficiently divisible such that $\bigcap_{m\geq 1}{\rm Bs}(S^{mq}E\otimes A^{\pm m})={\rm Bs}(S^{nq}E\otimes A^{\pm n})$ and such that $\bb B(E)={\rm Bs}(S^{nq}E)$. Clearly ${\rm Bs}(S^{nq}E\otimes A^n)\subseteq{\rm Bs}(S^{nq}E)\subseteq{\rm Bs}(S^{nq}E\otimes A^{-n})$. Conclude by Remark \ref{remark:basesimplesym}.
		\end{proof}

		\begin{lemma}
			Let $\lambda$ be any partition with at most $r={\rm rk}\, E$ parts. If $E$ is a V-positive vector bundle, then so are all Schur functors $\bb S_{\lambda}E$, in particular so is $\det E$.
			\begin{proof}
				We have surjective maps $T^{r}E\rightarrow \bb S_{\lambda}E$. 
				The result follows from Lemma \ref{lem:quotients}. Note that $\det E=S_{(1^r)}E$.
			\end{proof}
		\end{lemma}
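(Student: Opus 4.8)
The plan is to reduce the statement to two facts already in hand: the homogeneity of $\bb B_{\pm}$ under tensor powers (Corollary~\ref{cor:homogeneousB}) and its monotonicity under passing to quotients (Lemma~\ref{lem:quotients}). The bridge between them is the classical fact, valid in characteristic zero, that a Schur functor is a direct summand of a tensor power of $E$: if $\lambda=(\lambda_1\geq\lambda_2\geq\ldots)$ is a partition with $n\coloneqq\lambda_1+\lambda_2+\ldots$, then (a suitable normalization of) the Young symmetrizer $c_\lambda$ acts $\sO_X$-linearly on $T^nE=E^{\otimes n}$ as an idempotent with image $\bb S_\lambda E$, so that $T^nE=\bb S_\lambda E\oplus\ker c_\lambda$ and in particular there is a globally defined surjection $T^nE\twoheadrightarrow\bb S_\lambda E$. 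This is the same input already used in the proof of Lemma~\ref{lem:baselociviatensorpowers}; the hypothesis that $\lambda$ has at most $r={\rm rk}\, E$ parts is exactly what ensures $\bb S_\lambda E\neq 0$, so that the assertion is not vacuous.

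With this in hand the argument is short. First I would fix $n=\lambda_1+\lambda_2+\ldots$ and the split surjection $T^nE\twoheadrightarrow\bb S_\lambda E$ above. Applying Lemma~\ref{lem:quotients} to this surjection with $U=X$ gives
\[\bb B_{\pm}(\bb S_\lambda E)\ \subseteq\ \bb B_{\pm}(T^nE).\]
Next I would invoke Corollary~\ref{cor:homogeneousB}, which gives $\bb B_{\pm}(T^nE)=\bb B_{\pm}(E)$. Hence $\bb B_{\pm}(\bb S_\lambda E)\subseteq\bb B_{\pm}(E)$, and if $E$ is V-positive, i.e.\ $\bb B_{\pm}(E)\neq X$ for the relevant choice of sign, then $\bb B_{\pm}(\bb S_\lambda E)\neq X$ as well, so $\bb S_\lambda E$ is V-positive.

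For the determinant there is nothing more to do: $\det E=\Lambda^rE=\bb S_{(1^r)}E$ is the Schur functor attached to the single-column partition $(1,1,\ldots,1)$ with exactly $r$ parts, so it falls under the case just treated. (Alternatively, $\det E$ is a quotient of $T^rE$ directly via antisymmetrization, and one concludes as above with $n=r$.)

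I do not expect a genuine obstacle here: the whole content is the characteristic-zero decomposition of $T^nE$ into Schur summands, which is standard, together with the routine check that the two cited results apply. The only points that deserve a line of care are that the Schur projection is a morphism of $\sO_X$-modules — immediate from the functoriality of Schur functors, equivalently from the $\sO_X$-linearity of the symmetric-group action on $T^nE$ — and that it is surjective on all of $X$, not merely generically, which is automatic since it is the projection onto a direct summand; this is what makes the $U=X$ form of Lemma~\ref{lem:quotients} available.
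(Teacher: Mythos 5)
Your proof is correct and takes essentially the same route as the paper: realize $\bb S_\lambda E$ as a quotient (indeed a direct summand) of a tensor power via the Young symmetrizer, then combine Lemma~\ref{lem:quotients} with the homogeneity Corollary~\ref{cor:homogeneousB}. You are in fact a little more careful than the printed proof, which writes the surjection as $T^{r}E\twoheadrightarrow\bb S_\lambda E$ (correct only when $|\lambda|=r$) where you correctly use $T^{|\lambda|}E$, and you make explicit the appeal to homogeneity that the paper leaves implicit.
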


				\subsection{Positvity via the Serre line bundle} 
				
				Classically, positivity properties like ampleness or nefness for $E$ can be defined by the corresponding property for $\xi$. We have seen that the situation is not the same for (V-)bigness. Here we investigate the relation between base loci of $E$ and of $\mathcal O_{\bb P(E)}(1)$.
				
				\begin{lemma}
					Let $x\in X$. Then $\mathcal O_{\bb P(E)}(1)$ is generated by global sections at all points of $\pi^{-1}\{x\}=\bb P(E(x))\subset\bb P(E)$ if and only if $E$ is globally generated at $x$. In particular $\text{Bs}(E)=\pi(\text{Bs}(\mathcal O_{\bb P(E)}(1)))$. 
				\end{lemma}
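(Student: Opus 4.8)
\emph{The plan} is to reduce the statement to linear algebra on the fiber $\pi^{-1}\{x\}=\bb P(E(x))$ by exploiting that the pushforward of $\mathcal O_{\bb P(E)}(1)$ is $E$. First I would record the identification $H^0(\bb P(E),\mathcal O_{\bb P(E)}(1))=H^0(X,E)$ coming from $\pi_*\mathcal O_{\bb P(E)}(1)=\Sym^1E=E$ and the Leray spectral sequence. Next, using the tautological quotient $\pi^*E\twoheadrightarrow\mathcal O_{\bb P(E)}(1)$, I would check that for a point $y\in\pi^{-1}\{x\}$, corresponding to a one-dimensional quotient $q_y\colon E(x)\twoheadrightarrow\mathcal O_{\bb P(E)}(1)(y)$, the fiber evaluation map of $\mathcal O_{\bb P(E)}(1)$ at $y$ is, under the above identification of global sections, the composite $H^0(X,E)\xrightarrow{\mathrm{ev}_x}E(x)\xrightarrow{q_y}\mathcal O_{\bb P(E)}(1)(y)$, where $\mathrm{ev}_x$ is the fiber evaluation map of $E$ at $x$.

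With this in hand, set $W\coloneqq\mathrm{image}(\mathrm{ev}_x)\subseteq E(x)$. If $E$ is globally generated at $x$, then $W=E(x)$ by Nakayama, so $q_y(W)=\mathcal O_{\bb P(E)}(1)(y)$ for every $y\in\pi^{-1}\{x\}$ and hence $\mathcal O_{\bb P(E)}(1)$ is globally generated at every point of the fiber. Conversely, every hyperplane of $E(x)$ is $\ker q_y$ for some $y\in\bb P(E(x))=\pi^{-1}\{x\}$; so if $\mathcal O_{\bb P(E)}(1)$ is globally generated along the whole fiber, then $W\not\subseteq\ker q_y$ for all such $y$, whence $W$ lies in no hyperplane and $W=E(x)$, i.e.\ $E$ is globally generated at $x$. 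The ``in particular'' then follows formally: $x\in\text{Bs}(E)$ iff $E$ is not globally generated at $x$ iff $\mathcal O_{\bb P(E)}(1)$ is not globally generated at some point of $\pi^{-1}\{x\}$ iff $\pi^{-1}\{x\}\cap\text{Bs}(\mathcal O_{\bb P(E)}(1))\neq\emptyset$, which says $\text{Bs}(E)=\pi(\text{Bs}(\mathcal O_{\bb P(E)}(1)))$.

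I do not expect a serious obstacle. The only points needing care are the bookkeeping in Grothendieck's convention (one-dimensional quotients versus hyperplanes) and verifying that the restriction to a fiber of the evaluation map of $\mathcal O_{\bb P(E)}(1)$ is precisely the composite above; this is exactly where $\pi_*\mathcal O_{\bb P(E)}(1)=E$ enters. One should also keep in mind that ``globally generated at $x$'' is being used in the sense of surjectivity of $\mathrm{ev}_x\colon H^0(X,E)\to E(x)$, consistently with the definition of $\text{Bs}(E)$ recalled above.
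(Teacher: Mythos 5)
Your proposal is correct. The forward direction is essentially the paper's (pulling back and composing with the tautological quotient, though you phrase it at the level of $W=\operatorname{image}(\mathrm{ev}_x)$). The converse is where you genuinely diverge: the paper introduces the sheaf-level evaluation sequence $0\to K\to H^0(X,E)\otimes\mathcal O_{\bb P(E)}\to \mathcal O_{\bb P(E)}(1)\to 0$, restricts to a fiber to deduce surjectivity of $H^0(X,E)\to H^0(\bb P(E(z)),\mathcal O(1))$, then shows $R^1\pi_*K$ vanishes near $x$ and pushes forward. You instead work purely fiberwise: you record that the evaluation of $\mathcal O_{\bb P(E)}(1)$ at $y\in\pi^{-1}\{x\}$ factors as $q_y\circ\mathrm{ev}_x$, note that $\{\ker q_y\}_{y\in\bb P(E(x))}$ exhausts all hyperplanes of $E(x)$, and conclude $W=E(x)$ directly. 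Both arguments ultimately rest on the same linear-algebra input — a proper subspace of $E(x)$ lies in some hyperplane $\ker q_y$, i.e.\ no proper subspace of $H^0(\bb P^n,\mathcal O(1))$ generates $\mathcal O(1)$ — but your route bypasses the relative cohomology entirely and is shorter. The paper's cohomological framing is more suggestive of (and partly motivated by) the tempting-but-false generalization to $\mathcal O_{\bb P(E)}(m)$ with $m>1$ acknowledged in its footnote; from the vantage point of your argument, the obstruction to that generalization is transparent, since the kernels of the Veronese-type quotients $S^mE(x)\twoheadrightarrow\mathcal O(1)(y)^{\otimes m}$ do not sweep out all hyperplanes of $S^mE(x)$.
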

				\begin{proof}
					If $E$ is globally generated at $x$, then $\pi^*E$ is globally generated along $\bb P(E(x))$, hence so is its quotient $\mathcal O_{\bb P(E)}(1)$. 
					Conversely, if $\mathcal O_{\bb P(E)}(1)$ is globally generated along $\bb P(E(x))$,
					consider the evaluation sequence $0\to K\to H^0(X,E)\otimes\cal O_{\bb P(E)}\to \cal O_{\bb P(E)}(1)$ on $\bb P(E)$. By assumption there exists $x\in W\subset X$ an open neighborhood such that the evaluation sequence is also exact on the right over $\pi^{-1}W$. For $z\in W$ we have that the sequence restricts as the exact sequence $0\to K|_{\bb P(E(z))}\to H^0(X,E)\otimes\cal O_{\bb P(E(z))}\to \cal O_{\bb P(E(z))}(1)\to 0$. The resulting map $H^0(X,E)\to H^0(\bb P(E(z)),\cal O_{\bb P(E(z))}(1))$ must be surjective since no proper subspace of $H^0(\bb P^n,\mathcal O_{\bb P^n}(1))$ generates $\cal O_{\bb P^n}(1)$. Note that this fails for $\cal O_{\bb P^n}(m)$ with $m>1$\footnote{This fact was missed in the proof of \cite{FM1}*{Proposition 6.4}, but the proof there can be corrected with the arguments of \cite{BKKMSU}*{Proposition 3.1 and 3.2}}. We deduce that $H^1(\bb P(E(z)),K|_{\bb P(E(z))})=0$ and $R^1\pi_*K$ is supported away from $W$. By pushing forward the evaluation sequence we deduce that $E$ is globally generated at $x$. 
				\end{proof}
				
				\begin{remark}
					If $L$ is a $\pi$-ample line bundle on $\bb P(E)$, then we are usually only guaranteed $\pi(\text{Bs}(L))\subset\text{Bs}(\pi_*L)$.
					\cite{MU}*{Example 3.2} provide an example where $\cal O_{\bb P(E)}(m)$ is globally generated for some $m\geq 1$, but $S^mE$ is never globally generated. In particular the inclusions $\pi(\text{Bs}(\cal O_{\bb P(E)}(m)))\subset\text{Bs}(S^mE)$ and $\pi(\bb B(\cal O_{\bb P(E)}(1)))\subset\bb B(E)$ can be strict. 
				\end{remark}
				
				This issue goes away when we allow perturbations:
				
				\begin{proposition}[\cite{BKKMSU}*{Proposition 3.1 and Proposition 3.2}]\label{prop:imageBpm} With assumptions as above,
					\[\pi(\bb B_{\pm}(\xi))=\bb B_{\pm}(E)\]
					In particular, if $E$ is V-positive, then it is also L-positive.
				\end{proposition}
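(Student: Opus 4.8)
My plan is to prove the two inclusions $\pi(\bb B_\pm(\xi))\subseteq\bb B_\pm(E)$ and $\bb B_\pm(E)\subseteq\pi(\bb B_\pm(\xi))$ separately, and (following \cite{BKKMSU}) to carry out the case of $\bb B_-$ since $\bb B_+$ is entirely parallel. I would first fix an ample $A$ on $X$ large enough that $A_Y\coloneqq\xi+\pi^*A$ is ample on $Y\coloneqq\bb P(E)$; as the base loci in question do not depend on the polarization, this is harmless and yields $\bb B_-(\xi)=\bigcup_{q}\bb B(q\xi+A_Y)=\bigcup_{p\geq1}\bb B(p\xi+\pi^*A)$ and, dually, $\bb B_+(\xi)=\bigcap_{p\geq1}\bb B(p\xi-\pi^*A)$. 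For $\pi(\bb B_-(\xi))\subseteq\bb B_-(E)$, suppose $x\notin\bb B_-(E)$: by Remark~\ref{remark:basesimplesym}, for each $q\geq1$ there is $m$ with $S^{mq}E\otimes A^{m}$ globally generated at $x$. The canonical surjection $\pi^*\bigl(S^{mq}E\otimes A^{m}\bigr)\twoheadrightarrow\mathcal{O}_{\bb P(E)}(mq)\otimes\pi^*A^{m}$ then shows that $mq\xi+m\pi^*A$ is globally generated along $\pi^{-1}\{x\}$. Since $\text{Bs}(kD)\subseteq\text{Bs}(D)$ for any line bundle $D$ and $k\geq1$, this gives $\bb B(q\xi+\pi^*A)\cap\pi^{-1}\{x\}=\emptyset$ for every $q$, whence $\pi^{-1}\{x\}\cap\bb B_-(\xi)=\emptyset$, i.e.\ $x\notin\pi(\bb B_-(\xi))$. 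For $\bb B_+$ a single pair $(q,m)$ with $S^{mq}E\otimes A^{-m}$ globally generated at $x$ already suffices, because $\bb B_+(\xi)$ is an intersection over $q$.

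For the reverse inclusion $\bb B_-(E)\subseteq\pi(\bb B_-(\xi))$ — the substantive direction — assume $\pi^{-1}\{x\}\cap\bb B_-(\xi)=\emptyset$; we must show $x\notin\bb B_-(E)$. Since $\pi$ is proper and each $\bb B(p\xi+\pi^*A)$ is closed, for every $p\geq1$ there is an open $W\ni x$ with $\pi^{-1}(W)\cap\bb B(p\xi+\pi^*A)=\emptyset$, i.e.\ with $\mathcal{O}_{\bb P(E)}(mp)\otimes\pi^*A^{m}$ globally generated over $\pi^{-1}(W)$ for all sufficiently divisible $m$. I would like to descend this to the statement that $S^{mp}E\otimes A^{m}$ is globally generated at $x$ for some $m$ and every $p$, which by Remark~\ref{remark:basesimplesym} gives $x\notin\bb B_-(E)$.

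This descent is the crux, and it is exactly the obstruction that makes the analogous statement fail for the plain stable base locus (cf.\ the discussion before the proposition and \cite{MU}*{Example~3.2}): global generation of a twist $\mathcal{O}_{\bb P(E)}(d)\otimes\pi^*M$ on a neighbourhood of the fiber does not formally descend through $\pi_*$ to global generation of $S^{d}E\otimes M$ at $x$, because $H^0\bigl(\bb P(E),\mathcal{O}_{\bb P(E)}(d)\otimes\pi^*M\bigr)$ may map onto only a base-point-free \emph{sub}system of $|\mathcal{O}_{\bb P^{r-1}}(d)|$ on the fiber $\bb P(E(x))$. What repairs it is that global generation holds on a whole neighbourhood $\pi^{-1}(W)$, not merely along $\pi^{-1}\{x\}$: this forces $\xi\cdot C\geq0$ for every irreducible complete curve $C$ meeting $\pi^{-1}\{x\}$, hence $E|_{\gamma}$ is nef for every complete curve $\gamma\subset X$ through $x$ (a curve in $\bb P(E|_{\gamma})$ either lies in a fiber, where $\xi$ is ample, or dominates $\gamma$ and so meets $\pi^{-1}\{x\}$). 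This near-nefness of $E$ at $x$, combined with the growing twist $A^{m}$, is enough to run a Fujita-type relative vanishing argument and recover global generation of $S^{mp}E\otimes A^{m}$ at $x$; carrying this out carefully is precisely \cite{BKKMSU}*{Propositions~3.1 and~3.2}, whose argument I would follow. The final assertion is then immediate: if $\bb B_\pm(E)\neq X$ then $\pi(\bb B_\pm(\xi))\neq X$, so a fortiori $\bb B_\pm(\xi)\neq\bb P(E)$, i.e.\ a V-positive bundle is L-positive.
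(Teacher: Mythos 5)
The paper gives no independent proof of this proposition: it simply cites \cite{BKKMSU}*{Propositions 3.1 and 3.2}, adding only the observations that the smoothness hypothesis there is not used and that the definitions of $\bb B_\pm(E)$ employed are those of Remark~\ref{remark:basesimplesym}. Your treatment of the easy inclusion $\pi(\bb B_\pm(\xi))\subseteq\bb B_\pm(E)$ is correct, and your ultimate fallback to \cite{BKKMSU} for the hard inclusion is exactly the move the paper makes, so the two treatments are structurally the same.

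What does not hold up is the mechanism you sketch for the hard inclusion. The curve-nefness you extract is real: for a complete curve $C$ meeting $\pi^{-1}\{x\}$, a nonvanishing section of $(mp\xi+m\pi^*A)|_C$ gives $\xi\cdot C\geq-\tfrac1p\pi^*A\cdot C$, and letting $p\to\infty$ (with the neighborhood $W_p$ changing but always containing $x$) yields $\xi\cdot C\geq 0$. But the ensuing step --- that this ``near-nefness of $E$ at $x$, combined with the growing twist $A^m$, is enough to run a Fujita-type relative vanishing argument and recover global generation of $S^{mp}E\otimes A^m$ at $x$'' --- is a genuine gap, not a compressed step. Nonnegativity of $\xi$ on curves through $\pi^{-1}\{x\}$ is a strictly weaker condition than $x\notin\bb B_-(E)$, and there is no Fujita-type or relative vanishing theorem that promotes the former to the latter; the descent obstruction you yourself flag via \cite{MU}*{Example 3.2} is not resolved by curve degrees together with an ample twist. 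This is also not an accurate account of what \cite{BKKMSU} do: their argument exploits the $m=1$ form of the descent lemma proved just before this proposition, applied to bundles of the shape $S^qE\otimes A^{\pm1}$, together with the freedom in the parameter $q$ coming from Remark~\ref{remark:basesimplesym}, rather than a curve-intersection or vanishing argument. Since, like the paper, you in the end treat the hard direction as a black-box citation, the proposal is acceptable as a plan; but the interstitial sketch should be deleted rather than ``carried out carefully,'' as it would not close the gap.
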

				
				\noindent \cite{BKKMSU} assume smoothness for $X$, but this is not used in the proof. Note that they implicitly use the interpretations in Remark \ref{remark:basesimplesym} for $\bb B_{\pm}(E)$.

				\subsection{Positivity of tensor products and of direct sums}\label{sec:tensoranddirectsum}
				
				\begin{lemma}\label{lem:tensorbasepm}If $E$ and $F$ are vector bundles on $X$, then 
					\[\bb B_{\pm}(E\otimes F)\subseteq\bb B_{\pm}(E)\bigcup\bb B_{-}(F).\]
					It follows that the V-positivity notions are preserved by tensor product. Furthermore, if $E$ is V-big and $F$ is V-psef, then $E\otimes F$ is a V-big vector bundle. 
				\end{lemma}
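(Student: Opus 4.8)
The plan is to argue entirely with tensor powers, exploiting the isomorphism $T^{n}(E\otimes F)\cong T^{n}E\otimes T^{n}F$; this has no symmetric-power analogue and is exactly why tensor powers were introduced in \S\ref{sec:baselocitensorpowers}. Using Lemma \ref{lem:baselociviatensorpowers} (that $\bb B_{\pm}=\bb B_{\pm}^{\otimes}$), Remark \ref{remark:independentofample} (independence of the polarization) and Theorem \ref{thm:interpretations}, it suffices, after passing to complements pointwise and using the tensor-power descriptions of the base loci, to prove: \textup{(i)} if $x\notin\bb B_{-}(E)$ and $x\notin\bb B_{-}(F)$ then $x\notin\bb B_{-}(E\otimes F)$; and \textup{(ii)} if $x\notin\bb B_{+}(E)$ and $x\notin\bb B_{-}(F)$ then $x\notin\bb B_{+}(E\otimes F)$. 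Throughout I use freely that global generation at $x$ is inherited by tensor products and tensor powers (Remark \ref{remark:easybase}).

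For \textup{(i)}, fix $q\geq 1$. By the tensor-power description of $\bb B_{-}$ there are $m_{1},m_{2}\geq 1$ such that $T^{m_{1}q}E\otimes A^{m_{1}}$ and $T^{m_{2}q}F\otimes A^{m_{2}}$ are globally generated at $x$. Raising these to the $m_{2}$-th and $m_{1}$-th tensor powers and setting $m\coloneqq m_{1}m_{2}$, both $T^{mq}E\otimes A^{m}$ and $T^{mq}F\otimes A^{m}$ are globally generated at $x$, and tensoring them gives the same for $T^{mq}(E\otimes F)\otimes A^{2m}$. As $q$ was arbitrary, $x\notin\bb B^{\otimes}_{-,A^{2}}(E\otimes F)=\bb B_{-}(E\otimes F)$ by Remark \ref{remark:independentofample} and Lemma \ref{lem:baselociviatensorpowers}.

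For \textup{(ii)} the two factors play asymmetric roles. Theorem \ref{thm:interpretations} provides $q_{0},b\geq 1$ with $T^{bq_{0}}E\otimes A^{-b}$ globally generated at $x$, and the tensor-power description of $\bb B_{-}(F)$ applied with the parameter $q_{0}+1$ provides $m\geq 1$ with $T^{m(q_{0}+1)}F\otimes A^{m}$ globally generated at $x$. Raising the first witness to the $m(q_{0}+1)$-th tensor power and the second to the $bq_{0}$-th, then tensoring, the ample exponent works out to $mbq_{0}-bm(q_{0}+1)=-bm$, so $T^{N}(E\otimes F)\otimes A^{-bm}$ is globally generated at $x$ with $N=bq_{0}m(q_{0}+1)$; since $bm$ divides $N$ with quotient $q_{0}(q_{0}+1)$, this bundle has the shape $\bigotimes^{m'q'}(E\otimes F)(-m'A)$ with $m'=bm$, and Theorem \ref{thm:interpretations} gives $x\notin\bb B_{+}(E\otimes F)$. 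The step that needs genuine care is the choice of the parameter $q_{0}+1$: it is calibrated so that the positive twist carried by $F$ does not completely cancel the negative twist coming from $E$ while still leaving the divisibility $bm\mid N$ required to recognize the bundle via Theorem \ref{thm:interpretations}; a naive choice of parameter fails one of these two requirements, and this is the main obstacle in the proof.

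The consequences are then formal. If $E$ and $F$ are V-big, then $\bb B_{+}(E\otimes F)\subseteq\bb B_{+}(E)\cup\bb B_{-}(F)\subseteq\bb B_{+}(E)\cup\bb B_{+}(F)$ by Corollary \ref{cor:inclusionsvariousloci}, a union of two proper closed subsets of the irreducible variety $X$, hence proper; so $E\otimes F$ is V-big. For the remaining claims — a tensor product of V-psef bundles is V-psef, and a V-big bundle tensored with a V-psef bundle is V-big — the inclusion reduces us to checking that $\bb B_{-}(E)\cup\bb B_{-}(F)$, respectively $\bb B_{+}(E)\cup\bb B_{-}(F)$, is not all of $X$. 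Each is a countable union of proper closed subsets (every $\bb B(S^{q}G\otimes A)$ is closed, and proper since the relevant diminished base locus is $\neq X$, while $\bb B_{+}(E)$ is closed and proper by V-bigness), and an irreducible variety over an uncountable algebraically closed field is not a countable union of proper closed subsets; the case of an arbitrary characteristic-zero base field follows by extending scalars to such a field, under which all the base loci appearing here pull back. This extraction of properness from a countable union is the only point in the consequences that is not purely formal once the two inclusions are established.
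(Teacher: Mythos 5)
Your proof is correct and uses the same central mechanism as the paper---passing to tensor powers via Lemma \ref{lem:baselociviatensorpowers} and then invoking Remark \ref{remark:easybase}---but with two avoidable complications. The ``$q_0+1$'' parameter trick in the $\bb B_+$ step, which you flag as the main obstacle, can be sidestepped: since $\bb B_+$ is independent of the polarization (Remark \ref{remark:independentofample}), start from $T^{bq_0}E\otimes A^{-2b}$ globally generated at $x$ instead of $T^{bq_0}E\otimes A^{-b}$. Raising this to the $m$-th power, raising the $\bb B_-(F)$ witness $T^{mq_0}F\otimes A^{m}$ to the $b$-th, and tensoring gives $T^{bmq_0}(E\otimes F)\otimes A^{-bm}$ directly, already in the shape $T^{m'q'}(E\otimes F)\otimes A^{-m'}$ demanded by Theorem \ref{thm:interpretations}, with no arithmetic constraint to verify. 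This is precisely what the paper's single inclusion $\text{Bs}(T^{mq}(E\otimes F)\otimes A^{-m})\subseteq\text{Bs}(T^{mq}E\otimes A^{-2m})\cup\text{Bs}(T^{mq}F\otimes A^m)$ records; the paper then simply intersects over $q$ rather than arguing pointwise.

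For the consequences you correctly notice something the paper glides past: since $\bb B_-$ need not be closed, it is not formal that $\bb B_-(E)\cup\bb B_-(F)$ (or $\bb B_+(E)\cup\bb B_-(F)$) is a proper subset of $X$ merely because each constituent is. Your countable-union plus scalar-extension argument is a valid fix---stable base loci commute with flat base change and $X$ stays irreducible over any algebraically closed extension. An alternative inside the paper's own toolkit, avoiding any appeal to uncountability, is to combine $T^q(E\otimes F)(2A)\cong T^qE(A)\otimes T^qF(A)$ with Proposition \ref{prop:goodforcones}(ii), which reduces all the V-psef statements to the V-big $\otimes$ V-big case, where both augmented base loci are closed and proper and the union is obviously proper.
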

				\begin{proof}
					We have $\bb B_-(E)=\bb B_-^{\otimes}(E)=\bigcup_{q\geq 1}\bb B^{\otimes}(T^qE\otimes A)$ by Lemma \ref{lem:baselociviatensorpowers}. For fixed $q$, choose $m=m(q)$ sufficiently divisble so that $\text{Bs}(T^{mq}E\otimes A^m)=\bb B^{\otimes}(T^qE\otimes A)$, and
					$\text{Bs}(T^{mq}F\otimes A^m)=\bb B^{\otimes}(T^qF\otimes A)$, and $\text{Bs}(T^{mq}(E\otimes F)\otimes A^{2m})=\bb B^{\otimes}(T^q(E\otimes F)\otimes A^2)$.
					Conclude the $\bb B_{-}$ case from the inclusion $\text{Bs}(T^{mq}(E\otimes F)\otimes A^{2m})\subseteq \text{Bs}(T^{mq}E\otimes A^m)\bigcup \text{Bs}(T^{mq}F\otimes A^m)$ given by Remark \ref{remark:easybase}. The statement about augmented base loci is analogous from the inclusion $\text{Bs}(T^{mq}(E\otimes F)\otimes A^{-m})\subseteq\text{Bs}(T^{mq}E\otimes A^{-2m})\bigcup\text{Bs}(T^{mq}F\otimes A^m)$.
				\end{proof}
				
				\begin{proposition}\label{prop:goodforcones}
					Let $E$ be a vector bundle on a smooth projective variety $X$ polarized by some ample divisor $A$. Then the
					following properties hold:
					\begin{enumerate}[(i)]
						\item $E$ is V-big if and only if there exists $c > 0$ such that $S^c E(-A)$ (or $T^c E(-A)$) is V-psef. 
						\item $E$ is V-psef if and only if $S^c E(A)$ (or $T^cE(A)$) is V-big for all $c>0$.
					\end{enumerate}
					\begin{proof}
						(i) If $S^cE\otimes A^{-1}$ is V-psef, then $S^cE$ is V-big and we deduce that $E$ is also V-big by Corollary \ref{cor:homogeneousB}.
						Conversely, if $E$ is V-big, then for some large $m$ we have that $\bb B(S^mE\otimes A^{-1})\neq X$. In particular $\bb B_-(S^mE\otimes A^{-1})\neq X$ by Corollary \ref{cor:inclusionsvariousloci}.
						
						(ii) If $E$ is V-psef, then so is $S^cE$ and then $S^cE\otimes A$ is V-big.
						Conversely, if $E$ is not V-psef, then for some $q\geq 1$ we have that $\bb B(S^qE\otimes A)=X$, hence by Corollary \ref{cor:inclusionsvariousloci} $\bb B_+(S^qE\otimes A)=X$ and $S^qE\otimes A$ is not V-big.
					\end{proof}
				\end{proposition}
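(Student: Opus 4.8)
The plan is to recast both biconditionals entirely in terms of the closed loci $\bb B_\pm$, and then to combine two facts already available: the homogeneity $\bb B_\pm(E)=\bb B_\pm(S^cE)=\bb B_\pm(T^cE)$ of Corollary~\ref{cor:homogeneousB}, which lets me trade $E$ for $S^cE$ or $T^cE$ whenever no twist is present, and the chain $\bb B_-(G)\subseteq\bb B(G)\subseteq\bb B_+(G)$ of Corollary~\ref{cor:inclusionsvariousloci}. Since the base loci are independent of the polarization I take $A$ globally generated, and I use freely that $S^q(G\otimes L)\cong(S^qG)\otimes L^{\otimes q}$ for a line bundle $L$.

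The heart of the matter is a pair of ``upgrading'' statements valid for an arbitrary bundle $G$: (A) if $G(-A)$ is V-psef, then $G$ is V-big; and (B) if $G$ is V-psef, then $G(A)$ is V-big. Each follows by matching one term of a union with one term of an intersection. For (A): by definition $\bb B_+(G)=\bigcap_{q\geq 1}\bb B(S^qG\otimes A^{-1})$, so $\bb B_+(G)\subseteq\bb B(S^2G\otimes A^{-1})$; on the other hand $\bb B_-(G(-A))=\bigcup_{q\geq 1}\bb B\bigl(S^q(G\otimes A^{-1})\otimes A\bigr)=\bigcup_{q\geq 1}\bb B(S^qG\otimes A^{1-q})$, whose $q=2$ term is again $\bb B(S^2G\otimes A^{-1})$. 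Hence $\bb B_+(G)\subseteq\bb B(S^2G\otimes A^{-1})\subseteq\bb B_-(G(-A))$, and if the right side is not all of $X$ neither is the left. Statement (B) is identical, now matching $\bb B_+(G(A))\subseteq\bb B\bigl(S^2(G\otimes A)\otimes A^{-1}\bigr)=\bb B(S^2G\otimes A)$ with the $q=2$ term $\bb B(S^2G\otimes A)$ of $\bb B_-(G)=\bigcup_{q\geq 1}\bb B(S^qG\otimes A)$.

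Granting (A) and (B), the proposition follows quickly. For (i): if $S^cE(-A)$ or $T^cE(-A)$ is V-psef, then (A) applied with $G=S^cE$ (resp.\ $G=T^cE$) makes $S^cE$ (resp.\ $T^cE$) V-big, hence so is $E$ by Corollary~\ref{cor:homogeneousB}. Conversely, if $E$ is V-big, then by Theorem~\ref{thm:interpretations} there are $q,m\geq 1$ and a point $x$ at which $T^{mq}E(-mA)$ is globally generated, hence so is its quotient $S^{mq}E(-mA)$; twisting by the globally generated line bundle $A^{m-1}$ preserves this, so $\text{Bs}\bigl(T^{mq}E(-A)\bigr)\neq X$ and $\text{Bs}\bigl(S^{mq}E(-A)\bigr)\neq X$, and a fortiori $\bb B_-\bigl(T^{mq}E(-A)\bigr)\neq X$ and $\bb B_-\bigl(S^{mq}E(-A)\bigr)\neq X$ by Corollary~\ref{cor:inclusionsvariousloci}; thus $c=mq$ makes both $S^cE(-A)$ and $T^cE(-A)$ V-psef. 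For (ii): if $E$ is V-psef then so are $S^cE$ and $T^cE$ (Corollary~\ref{cor:homogeneousB}), hence $S^cE(A)$ and $T^cE(A)$ are V-big by (B), for every $c>0$. Conversely, if $E$ is not V-psef then $\bb B_-(E)=X$; written in the form $\bigcup_{q}\bigcap_m\text{Bs}(S^{mq}E\otimes A^m)$ of Remark~\ref{remark:basesimplesym} its terms are directed under divisibility (one absorbs an ample globally generated factor via Remark~\ref{remark:easybase}), so a generic-point argument produces a single $q$ with $\bigcap_m\text{Bs}(S^{mq}E\otimes A^m)=X$, hence $\bb B(S^qE\otimes A)=X$. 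Then $\bb B_+(S^qE\otimes A)=X$ by Corollary~\ref{cor:inclusionsvariousloci}, and $\bb B_+(T^qE\otimes A)=X$ as well since $\bb B(S^qE\otimes A)\subseteq\bb B(T^qE\otimes A)$ by Lemma~\ref{lem:quotients} applied to $T^qE\otimes A\to S^qE\otimes A$; so neither $S^qE(A)$ nor $T^qE(A)$ is V-big.

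The step I expect to carry the weight is (A)/(B): everything hinges on the observation that tensoring by $A^{\mp 1}$ shifts the exponent of $A$ inside a degree-$q$ symmetric power by exactly $q$, so that the $q=2$ slice of the ``for all $q$'' description of a diminished base locus meets the ``there exists $q$'' description of the augmented base locus of the twist in exactly one common term; that common term is what bridges the universal and existential quantifiers built into $\bb B_-$ and $\bb B_+$. The remaining ingredients are routine bookkeeping with Corollaries~\ref{cor:homogeneousB} and~\ref{cor:inclusionsvariousloci} and the inclusions of Remark~\ref{remark:easybase}; the only other mild point is pulling a single index out of $\bb B_-(E)=X$, for which one uses directedness together with the (dense) generic point of $X$. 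Smoothness of $X$ plays no role.
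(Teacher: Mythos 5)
Your argument is correct and follows the same path as the paper's: both hinge on the chain $\bb B_-\subseteq\bb B\subseteq\bb B_+$ of Corollary~\ref{cor:inclusionsvariousloci}, the homogeneity of Corollary~\ref{cor:homogeneousB}, and a direct unwinding of the definitions of $\bb B_\pm$. The only real difference is one of exposition: your ``statements (A) and (B)'' (matching the $q=2$ slice of an intersection against the $q=2$ slice of a union) make explicit the two implications the paper asserts without proof, and in the converse of (i) you detour through Theorem~\ref{thm:interpretations} (with a WLOG replacing $A$ by a globally generated multiple, which works but deserves a word of justification since the condition ``$S^cE(-A)$ V-psef'' depends on $A$) where the paper simply observes that $\bb B_+(E)\neq X$ forces some term $\bb B(S^mE\otimes A^{-1})\neq X$; likewise your directedness/generic-point remark in the converse of (ii) is unnecessary, as any infinite union of closed sets covering an irreducible $X$ must have one member equal to $X$.
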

				
				For direct sums we prove the following:
				
				\begin{lemma}\label{lem:directsums}
					Let $E$ and $F$ be two vector bundles on $X$. Then
					\[\mathbb{B}_{\pm}(E\oplus F)=\mathbb{B}_{\pm}(E)\bigcup\mathbb{B}_{\pm}(F)\] 
					In particular $V$-positivity is preserved by direct sums.
					\begin{proof}
						Note that $T^{ mq}(E\oplus F)=\bigoplus_{i+j=mq} T^iE\otimes T^jF$.
						Then $\text{Bs}(T^{ mq}(E\oplus F)\otimes A^{\pm m})\supseteq\text{Bs}(T^{ mq}E\otimes A^{\pm m})\bigcup \text{Bs}(T^{ mq}F\otimes A^{\pm m})$ by Remark \ref{remark:easybase}, thus $\mathbb{B}_{\pm}(E)\bigcup\mathbb{B}_{\pm}(F)\subseteq \mathbb{B}_{\pm}(E\oplus F)$. Conversely we first treat $\bb B_-$. Assume $x\not\in\bb B_-(E) \bigcup \bb B_-(F)$. Then for all $q$ we can find $m=m(q)$ such that $T^{mq}E\otimes A^m$ and $T^{mq}F\otimes A^m$ are globally generated at $x$.
						We want $M=M(q)$ such that $T^{M\cdotp q}(E\oplus F)\otimes A^{2M}$ is also globally generated at $x$, equivalently $T^iE\otimes T^jF\otimes A^{2M}$ is globally generated at $x$ for all $i+j=M\cdotp q$.
						
						Let $N=N(q)$ be sufficiently large so that $T^iE\otimes T^jF\otimes A^n$ is globally generated for all $(i,j)\in\{0,\ldots,mq-1\}^2$ and all $n\geq N$. For any $i\geq 0$ write $i=a_i(mq)+b_i$ with $0\leq b_i<mq$. Then $T^iE\otimes T^jF\otimes A^{2M}=T^{a_i}(T^{mq}E\otimes A^m)\otimes T^{a_j}(T^{mq}F\otimes A^m)\otimes(T^{b_i}E\otimes T^{b_j}F\otimes A^{2M-m(a_i+a_j)})$ is globally generated at $x$ for any choice $M\geq N$ since $m(a_i+a_j)=\frac 1q(i+j)-\frac 1q(b_i+b_j)\leq M$.
						
						If $x\not\in\bb B_+(E)\bigcup\bb B_+(F)$, then by choosing $A$ very ample we can find $q$ sufficiently divisible so that $T^qE\otimes A^{-2}$ and $T^qF\otimes A^{-2}$ are globally generated at $x$. As in the case of $\bb B_-$ we can find $M=M(q)$ such that $T^iE\otimes T^jF\otimes A^{-M}$ is globally generated at $x$ whenever $i+j=Mq$.  
					\end{proof}
				\end{lemma}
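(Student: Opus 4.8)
The plan is to work throughout with the tensor-power descriptions of the base loci, using Lemma~\ref{lem:baselociviatensorpowers} to replace $\bb B_{\pm}(\cdot)$ by $\bb B^{\otimes}_{\pm,A}(\cdot)$, and to exploit the binomial decomposition $T^{mq}(E\oplus F)=\bigoplus_{i+j=mq}T^iE\otimes T^jF$ as the organizing identity.

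The inclusion $\bb B_{\pm}(E)\cup\bb B_{\pm}(F)\subseteq\bb B_{\pm}(E\oplus F)$ is the easy direction. Since $T^{mq}E$ and $T^{mq}F$ occur as direct summands of $T^{mq}(E\oplus F)$ (the terms $(i,j)=(mq,0)$ and $(0,mq)$), Remark~\ref{remark:easybase}(1) gives $\text{Bs}\bigl(T^{mq}(E\oplus F)\otimes A^{\pm m}\bigr)\supseteq\text{Bs}(T^{mq}E\otimes A^{\pm m})\cup\text{Bs}(T^{mq}F\otimes A^{\pm m})$ for every $m,q$. Intersecting over $m$ and then taking the intersection (for $\bb B_+$) or the union (for $\bb B_-$) over $q$ yields the claim.

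For the reverse inclusion in the diminished case, suppose $x\notin\bb B_-(E)\cup\bb B_-(F)$. Fix $q\geq 1$ and choose $m=m(q)$ so that $T^{mq}E\otimes A^m$ and $T^{mq}F\otimes A^m$ are both globally generated at $x$. I want $M=M(q)$ with $T^{Mq}(E\oplus F)\otimes A^{2M}$ globally generated at $x$; by the decomposition above it suffices to globally generate $T^iE\otimes T^jF\otimes A^{2M}$ at $x$ for every $i+j=Mq$. Here I run a division-algorithm argument: write $i=a_i(mq)+b_i$ and $j=a_j(mq)+b_j$ with $0\leq b_i,b_j<mq$, so that $T^iE\otimes T^jF=T^{a_i}\bigl(T^{mq}E\bigr)\otimes T^{a_j}\bigl(T^{mq}F\bigr)\otimes T^{b_i}E\otimes T^{b_j}F$. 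Tensor powers of bundles globally generated at $x$ are globally generated at $x$, so $T^{a_i}(T^{mq}E\otimes A^m)$ and $T^{a_j}(T^{mq}F\otimes A^m)$ are globally generated at $x$, and it remains to absorb the residual factor $T^{b_i}E\otimes T^{b_j}F\otimes A^{\,2M-m(a_i+a_j)}$. Choosing $N=N(q)$ large enough that $T^bE\otimes T^{b'}F\otimes A^n$ is globally generated (everywhere) for all $(b,b')\in\{0,\dots,mq-1\}^2$ and all $n\geq N$ finishes it, provided $2M-m(a_i+a_j)\geq N$; since $m(a_i+a_j)=\tfrac1q\bigl((i+j)-(b_i+b_j)\bigr)\leq M$, any $M\geq N$ works. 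Taking the union over $q$ gives $x\notin\bb B_-(E\oplus F)$.

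The augmented case follows the same template, except the bookkeeping must now keep a \emph{negative} twist alive, so one starts from a very ample $A$ and, by the tensor-power description of $\bb B_+$, chooses $q$ sufficiently divisible so that $T^qE\otimes A^{-2}$ and $T^qF\otimes A^{-2}$ are globally generated at $x$; splitting $i$ and $j$ into multiples of $q$ plus remainders in $\{0,\dots,q-1\}$ and absorbing the finitely many residual bundles $T^bE\otimes T^{b'}F$ after a bounded positive twist, one arranges $T^iE\otimes T^jF\otimes A^{-M}$ globally generated at $x$ whenever $i+j=Mq$, hence $T^{Mq}(E\oplus F)\otimes A^{-M}$ is. I expect the only real obstacle to be this exponent accounting: one must check uniformly over all $(i,j)$ with $i+j=Mq$ that the leftover power of $A$ stays nonnegative in the $\bb B_-$ case (where $m(a_i+a_j)\leq M$ is exactly what saves us) and sufficiently negative in the $\bb B_+$ case (where the auxiliary ampleness of $A$ and the divisibility of $q$ must be chosen with a little room to spare). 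The final sentence about V-positivity being preserved is then immediate, since $\bb B_\pm(E\oplus F)=X$ forces $\bb B_\pm(E)=X$ or $\bb B_\pm(F)=X$.
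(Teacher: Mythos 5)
Your proof is correct and follows essentially the same route as the paper's: the identity $T^{mq}(E\oplus F)=\bigoplus_{i+j=mq}T^iE\otimes T^jF$, the easy direction via Remark~\ref{remark:easybase}, and the division-algorithm bookkeeping (write $i=a_i(mq)+b_i$, pull out $T^{a_i}(T^{mq}E\otimes A^m)$ and $T^{a_j}(T^{mq}F\otimes A^m)$, and absorb the bounded remainder $T^{b_i}E\otimes T^{b_j}F$ into a large enough power of $A$). The $\bb B_+$ bookkeeping you flag as a potential obstacle does close up exactly as you anticipate: since $i+j=Mq$ forces $b_i+b_j\in\{0,q\}$, the leftover exponent $2(a_i+a_j)-M$ is $M$ or $M-2$, which is large for $M$ large, so the finitely many residual bundles are absorbed.
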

				
				An extension of globally generated vector bundles is not necessarily globally generated. 
				This is an illustration of the necessity of exactness on the right in the top row in the Snake Lemma.
				For instance if $E$ is an elliptic curve and $0\to\cal O_E\to \cal P\to\cal O_E\to 0$ is a nontrivial extension, then $H^0(E,\cal P)$ is 1-dimensional, hence $\cal P$ is not globally generated. It is however nef. More generally an extension of nef vector bundles is nef by \cite{FM1}*{Remark 3.4 and Lemma 3.31}.
				
				For line bundles, the diminished base locus is also known as the nonnef locus. This and the extension remarks above might suggest that if $0\to E'\to E\to E''\to 0$ is a short exact sequence of bundles, then $\bb B_-(E)\subseteq\bb B_-(E')\bigcup\bb B_-(E'')$, i.e., if $E'$ and $E''$ are ``nef at a point $x$'', then $E$ is also ``nef at $x$''. This is not the case:
				
				\begin{example}\label{ex:B-huh}
					Let $X$ be the surface obtained by blowing-up $\bb P^2$ in a point $p$ and then again in an infinitely near point $p'$. Denote by $L$ the pullback of a line, by $\overline F$ the strict transform of the first exceptional divisor, and by $F'$ the second exceptional divisor. We have an exact sequence
					\[0\to\cal O_X(L+\overline F)\to\cal O_X(L+\overline F+F')\to\cal O_{F'}(L+\overline F+F')\to 0\]
					The sums $L+(\overline F)$ and $L+(\overline F+F')$ are Zariski decompositions on $X$, and 
					$\cal O_{F'}(L+\overline F+F')$ is trivial (on $F'$). We have $\bb B_-(L+\overline F)=\overline F$ and $\bb B_-(L+\overline F+F')=\overline F+F'$ are the supports of the negative parts in the Zariski decompositions by \cite{ELMNP}*{Example 1.17}. This is not yet our desired example since the quotient is a torsion sheaf, not a vector bundle.
					
					The exact sequence above induces a commutative diagram with exact rows
					\[ \xymatrix{
						0\ar[r]& \cal O_X(L+\overline F)\ar[r]\ar@{=}[d]& E\ar[r]\ar@{->>}[d]&\cal O_X\ar[r]\ar@{->>}[d]&0\\
						0\ar[r]& \cal O_X(L+\overline F)\ar[r]& \cal O_X(L+\overline F+F')\ar[r]&\cal O_{F'}\ar[r]&0}
					\]
					where $E$ is a rank 2 vector bundle.
					We have $\bb B_-(L+\overline F)=\overline F$ and $\bb B_-(\cal O_X)=\emptyset$, however using Lemma \ref{lem:quotients} we obtain $\overline F+F'=\bb B_-(L+\overline F+F')\subseteq\bb B_-(E)$ properly contains their union.    \qed
				\end{example}
				
				Of course we also do not expect this inclusion to always hold. An easy example is given by the Euler sequence $0\to\cal O_{\bb P^1}(-1)\to\cal O_{\bb P^1}^2\to\cal O_{\bb P^1}(1)\to 0$.
				Similarly, the example below suggests that the augmented base locus of a vector bundle of rank bigger than 1 should not be called the nonample locus.

				\begin{example}\label{ex:B+huh}
					With $X$ the double blow-up of $\bb P^2$ in the example above, consider $C$ the strict transform of a conic in $\bb P^2$ passing through $p$ with tangent direction not corresponding to $p'$. Consider the exact sequence
					\[0\to\cal O_X(C+2F')\to\cal O_X(C+\overline F+2F')\to\cal O_{\overline F}(C+\overline F+2F')\to 0.\]
					We have $(C\cdot F')=0$, $({F'}^2)=-1$, and $(C^2)=3$, thus $C$ is the positive part of the Zariski decomposition of $C+2F'$. Furthermore $(C\cdot \overline F)=1$, hence $\overline F\not\subset\bb B_+(C+2F')$ by \cite{ELMNP}*{Example 1.11}.
					
					We compute $((C+\overline F+2F')\cdot \overline F)=1$, thus $\cal O_{\overline F}(C+\overline F+2F')\simeq\cal O_{\bb P^1}(1)$ is ample. 
					
					The divisor $P=C+\overline F+F'$ is nef because $C$ is nef, and $(P\cdot\overline F)=0$ and $(P\cdot F')=0$. In fact it is the pullback of the conic from $\bb P^2$. Since $({F'}^2)=-1$, these also imply that $P$ is the positive part of the Zariski decomposition of $C+\overline F+2F'$, and that $\overline F\subseteq\bb B_+(P+F')$ by \cite{ELMNP}*{Example 1.11}, since $(P\cdot \overline F)=0$.
					
					We look for an ample divisor $A$ on $X$ such that $(A\cdot\overline F)=1$. Assuming the existence of such a divisor, as above we construct a rank 2 vector bundle $E$ sitting in an extension
					\[0\to\cal O_X(C+2F')\to E\to\cal O_X(A)\to 0\]
					and that has $\cal O_X(C+\overline F+2F')$ as a quotient. In particular $\overline F\subset\bb B_+(C+\overline F+2F')\subseteq\bb B_+(E)$ by Lemma \ref{lem:quotients}. However $\overline F$ is not a subset of $\bb B_+(C+2F')$ or $\bb B_+(A)=\emptyset$. This would give our example. 
					
					To construct $A$, consider first the divisor $-2\overline F-3F'$. It is relatively ample over $\bb P^2$ and it has degree $1$ on $\overline F$. Then for all large enough $a>0$, with $L$ denoting the pullback of a general line in $\bb P^2$, the divisor $A=aL-2\overline F-3F'$ is ample and $(A\cdot\overline F)=1$.\qed
				\end{example}
				
				\begin{question} Are there extensions $0\to E'\to E\to E''\to 0$ of vector bundles such that $E'$ and $E''$ are V-big (resp.~V-psef), but $E$ is not V-big (resp.~V-psef)?
				\end{question}
				
				\subsection{Twisted bundles}
				Let $E$ be a vector bundle and let $T$ be an $\bb R$-Cartier $\bb R$-divisor on $X$. 
				Say $(E,T)$ and $(E',T')$ are equivalent if there exists a Cartier divisor $T''$ such that $E'=E(T'')$ and $T'=T-T''$.   
				The equivalence class of a pair $(E,T)$ is called the {\bf formal twist} of $E$ by $T$ and denoted $E\langle T\rangle$. We define $\bb P(E\langle T\rangle)$ to be $\bb P(E)$ polarized by $\cal O_{\bb P(E)}(1)\langle\pi^*T\rangle$. When $T$ is a $\bb Q$-Cartier $\bb Q$-divisor, we say that $E\langle T\rangle$ is $\bb Q$-{\bf twisted}. By abuse, on occasion we refer to a twisted bundle as $E$ instead of $E\langle T\rangle$. When $T=0$ we say that $E$ is an ({\bf untwisted}) bundle. 
				
				The theory of twisted bundles has natural dominant pullbacks, tensor products, and Schur functors, e.g., $\bb S_{\lambda}(E\langle T\rangle)=(\bb S_{\lambda}E)\langle mT\rangle$ for any partition $\lambda\vdash m$ and Schur functor $\bb S_{\lambda}$. We can also define arbitrary pullbacks up to replacing $T$ by an $\bb R$-linearly equivalent $\bb R$-Cartier $\bb R$-divisor. 
				
				Let $X$ be a projective variety. \cite{ELMNP} define augmented and diminished base loci for $\bb R$-Cartier $\bb R$-divisors by
				
				\[\bb B_+(D)=\bigcap_{D=A+E}{\rm Supp}\, E\quad\text{ and }\quad\bb B_-(D)=\bigcup_{D+A\text{ is a }\bb Q\text{-divisor}}\bb B(D+A).\]
				Note that the stable base locus $\bb B(D)$ is not as easy to define for $\bb R$-divisors. Natural candidates are $\bigcap_{D'\in|D|_{\bb R}}{\rm Supp}\, D'$ and $\bigcap_{D'\in|D|_{\bb Q}}{\rm Supp}\, D'=\bigcap_{m\geq 1}{\rm Bs}(\lfloor mD\rfloor)\bigcup\bigcap_{m\geq 1}{\rm Supp}(\{mD\})$. They  are not equal in general. 
				We list the basic results about augmented and diminished base loci from \cite{ELMNP}.
				
				\begin{remark}[\cite{ELMNP}]\label{rmk:RealB}
					Let $X$ be a projective variety.
					Below all divisors are $\bb R$-Cartier $\bb R$-divisors and $||\cdot||$ is any norm on the finite dimensional real vector space $N^1(X)$. Generally $D$ denotes an arbitrary divisor, while $A$ denotes an ample divisor.
					\begin{enumerate}
						\item $\bb B_{\pm}(D)$ are homogeneous numerical invariants of $D$.
						\item $\bb B_{\pm}(D)=\emptyset$ if and only if $D$ is ample (resp.~nef). At the other extreme $\bb B_{\pm}(D)=X$ if and only if $D$ is not big (resp.~not psef).
						\item There exists $\varepsilon>0$ such that $\bb B_+(D-D')\subseteq\bb B_+(D)$ for all $D'$ with $||D'||<\varepsilon$. Equality holds if additionally $D'$ is ample.
						\item $\bb B_+(D_1+D_2)\subseteq\bb B_+(D_1)\bigcup\bb B_+(D_2)$. 
						\item $\bb B_-(D)=\bigcup_A\bb B_+(D+A)$.
						If $\lim_{m\to\infty}A_m=0$, then $\bb B_-(D)=\bigcup_m\bb B_+(D+A_m)$.
						\item If $\bb B_-(D)$ is closed, then there exists $\epsilon>0$ such that $\bb B_-(D)=\bb B_+(D+A)$ for all $A$ with $||A||<\varepsilon$.
						\item There exists $\varepsilon>0$ such that $\bb B_-(D-A)=\bb B_+(D-A)=\bb B_+(D)$ if $||A||<\varepsilon$.
					\end{enumerate}
					
					To these we add
					\begin{enumerate}[(a)]
						\item $\bb B_{\pm}(D_1+D_2)\subseteq\bb B_{\pm}(D_1)\bigcup\bb B_-(D_2)$.
						\item In statement (3) we can replace $D'$ being ample with nef. 
						\item $\bb B_-(D)=\bigcup_A\bb B_-(D+A)=\bigcup_m\bb B_-(D+L_m)$ if $\lim_{m\to\infty}L_m=0$ is a sequence of nef divisors.
					\end{enumerate}
					\noindent \cite{ELMNP} assume that $X$ is also normal, but this does not appear to be used in the proofs of the results above.
					\begin{proof}
						(a) Use (3), (4), and (5). Then $\bb B_+(D_1+D_2)\subseteq\bb B_+(D_1-A)\bigcup\bb B_+(D_2+A)\subseteq\bb B_+(D_1)\bigcup\bb B_-(D_2)$ for sufficiently small ample $A$. The statement about $\bb B_-$ follows from this and (5).
						
						(b) Assume $D'$ is nef. 
						By (a), we have $\bb B_+(D)\subseteq\bb B_+(D-D')\bigcup\bb B_-(D')=\bb B_+(D-D')$. The reverse inclusion is the first statement of (3).
						
						(c) The inclusions $\bb B_-(D)\supseteq\bb B_-(D+A)$ and $\bb B_-(D)\supseteq\bb B_-(D+L_m)$ follow from (a) and (2). Clearly the same holds for the unions. Conversely, by (a), for every fixed ample $A$ we have $\bb B_+(D+A)\subseteq\bb B_-(D+\frac 12A)$ and $\bb B_+(D+A)\subseteq\bb B_-(D+L_m)$ whenever $A-L_m$ is ample. These give the necessary reverse inclusions via (5).
					\end{proof}
				\end{remark}
				
				Inspired by Proposition \ref{prop:imageBpm} we define $\bb B_{\pm}$ for twisted bundles.
				
				\begin{definition}\label{def:BQtwist}
					$\bb B_{\pm}(E\langle T\rangle)\coloneqq\pi\bigl(\bb B_{\pm}(\xi+\pi^*T)\bigr)$.
				\end{definition}

				\noindent The definition only depends on the numerical class of $T$.
				V-positivity and L-positivity for twisted bundles are defined as in Definition \ref{VandLPositiveDefinition}. 
				As a consequence of parts (a), (b), (c) of Remark \ref{rmk:RealB} we have the following:
				
				\begin{corollary}\label{cor:RealB+}
					Let $E$ be a vector bundle and let $T$ be an $\bb R$-Cartier $\bb R$-divisor. 
					\begin{enumerate}
						\item There exists $\varepsilon>0$ such that if $||T'||<\varepsilon$ on $X$, then $\bb B_+(E\langle T-T'\rangle)\subseteq\bb B_+(E\langle T\rangle)$. If $T'$ is ample (or just nef) on $X$, then equality holds.
						\item There exists a sequence of $\bb Q$-Cartier $\bb Q$-divisors $T_m$ such that $T-T_m$ are all ample, $\lim_{m\to\infty}T_m=T$, and $\bb B_+(E\langle T\rangle)=\bb B_+(E\langle T_m\rangle)$.
						\item There exists a sequence of $\bb Q$-Cartier $\bb Q$-divisors $T_m$ such that $T_m-T$ are ample, $\lim_{m\to\infty}T_m=T$, and $\bb B_-(E\langle T\rangle)=\bigcup_m\bb B_-(E\langle T_m\rangle)$.
					\end{enumerate} 
				\end{corollary}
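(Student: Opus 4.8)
The plan is to reduce each of the three statements to the corresponding fact on $\bb P(E)$ through the defining formula $\bb B_{\pm}(E\langle S\rangle)=\pi\bigl(\bb B_{\pm}(\xi+\pi^*S)\bigr)$ of Definition~\ref{def:BQtwist}, feeding in parts (a), (b), (c) of Remark~\ref{rmk:RealB}. The one structural point to keep in mind throughout is that $\pi^*$ carries an ample (or nef) class on $X$ to a class on $\bb P(E)$ that is only nef, never ample; so at every step one must invoke the \emph{nef} strengthenings (b) and (c) of Remark~\ref{rmk:RealB} rather than its ample-only statements (3), (5).

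For (1), I would apply Remark~\ref{rmk:RealB}(3) to $D\coloneqq\xi+\pi^*T$ on $\bb P(E)$ to obtain some $\varepsilon'>0$. Since $\pi^*\colon N^1(X)\to N^1(\bb P(E))$ is a continuous linear map, there is $\varepsilon>0$ with $\|\pi^*T'\|<\varepsilon'$ whenever $\|T'\|<\varepsilon$. For such $T'$, Remark~\ref{rmk:RealB}(3) gives $\bb B_+(\xi+\pi^*(T-T'))=\bb B_+(D-\pi^*T')\subseteq\bb B_+(D)$, and pushing forward by $\pi$ yields $\bb B_+(E\langle T-T'\rangle)\subseteq\bb B_+(E\langle T\rangle)$. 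If in addition $T'$ is nef on $X$ (in particular if it is ample), then $\pi^*T'$ is nef on $\bb P(E)$, so Remark~\ref{rmk:RealB}(b) upgrades the inclusion $\bb B_+(D-\pi^*T')\subseteq\bb B_+(D)$ to an equality, which again pushes forward.

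For (2), I would use that $\bb Q$-Cartier $\bb Q$-divisor classes are dense in $N^1(X)$ while $\{S\in N^1(X):T-S\text{ ample}\}$ is a nonempty open set having $T$ in its closure; hence there are $\bb Q$-Cartier $\bb Q$-divisors $T_m$ with $T-T_m$ ample and $T_m\to T$, and we may also arrange $\|T-T_m\|<\varepsilon$ for the $\varepsilon$ of part (1). Applying part (1) with $T'=T-T_m$ (ample, hence nef, of small norm) then gives $\bb B_+(E\langle T_m\rangle)=\bb B_+(E\langle T\rangle)$. For (3), I would similarly choose $\bb Q$-Cartier $\bb Q$-divisors $T_m$ with $T_m-T$ ample and $T_m\to T$; then $L_m\coloneqq\pi^*(T_m-T)$ is a sequence of nef classes on $\bb P(E)$ tending to $0$, so Remark~\ref{rmk:RealB}(c) applied to $D=\xi+\pi^*T$ gives $\bb B_-(\xi+\pi^*T)=\bigcup_m\bb B_-(\xi+\pi^*T+L_m)=\bigcup_m\bb B_-(\xi+\pi^*T_m)$. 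Since taking images of sets commutes with unions, applying $\pi$ yields $\bb B_-(E\langle T\rangle)=\bigcup_m\bb B_-(E\langle T_m\rangle)$.

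I expect the only real obstacle to be the ample-versus-nef subtlety flagged above: the naive idea of pulling back the $\bb R$-divisor statements of Remark~\ref{rmk:RealB} verbatim fails, because $\pi^*$ of an ample class is merely nef on $\bb P(E)$, so one cannot conclude the equalities from the ample versions; the remedy is to pass to the nef-enhanced versions (b), (c), which were set up precisely for this purpose. The remaining ingredients — continuity of $\pi^*$, density of $\bb Q$-divisor classes in $N^1(X)$, and the commutation of images of sets with unions — are routine.
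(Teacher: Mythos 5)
Your proof is correct and is exactly the paper's intended route: the paper gives no explicit proof of this corollary, stating only that it follows from parts (a), (b), (c) of Remark \ref{rmk:RealB} via Definition \ref{def:BQtwist}, and your argument supplies precisely those details. You have also correctly identified the one subtlety that makes the nef strengthenings (b), (c) necessary, namely that $\pi^*$ carries ample classes on $X$ only to nef classes on $\bb P(E)$.
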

				
				\begin{lemma}\label{5.2}
					Let $E$ be an $\bb R$-twisted bundle. For any sequence $\{A_m\}_m\subset N^1(X)$ of ample classes such that $\lim_{m\to\infty}A_m=0$, we have
					$$\mathbb{B}_-(E)=\bigcup_m \mathbb{B}_+(E\langle A_m\rangle).$$
					\begin{proof}
						There exists a sequence $\{D_m=a_m\xi+\pi^*A_m\mid 0<a_m\ll 1\}$ such that each $D_m$ is ample and $\lim_{m\to\infty}D_m=0$. By Remark \ref{rmk:RealB}.(5),  we have $\mathbb{B}_-(\xi)=\bigcup_m\mathbb{B}_+(\xi+D_m)=\bigcup_m\mathbb{B}_+(\xi+\frac{1}{1+a_m}\pi^* A_m)\supseteq\bigcup_m\bb B_+(\xi+\pi^*A_m)$. The inclusion is Remark \ref{rmk:RealB}.(a). For every $m$ we can find $N_m$ such that $\frac 1{1+a_m}A_m-A_{N_m}$ is ample. Then $\bb B_+(\xi+\frac1{1+a_m}\pi^*A_m)\subseteq\bb B_+(\xi+\pi^*A_{N_m})$, again by Remark \ref{rmk:RealB}.(a). This gives the reverse inclusion. Finally apply Definition \ref{def:BQtwist}.
					\end{proof}
				\end{lemma}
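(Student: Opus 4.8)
The plan is to transport the statement from $X$ to the projective bundle $\mathbb{P}(E)$ via Definition \ref{def:BQtwist}, and then to run the $\mathbb{R}$-divisor formalism of Remark \ref{rmk:RealB} there. Write $\xi$ for the $\mathbb{R}$-Cartier class polarizing $\mathbb{P}(E)$ as the given $\mathbb{R}$-twisted bundle $E$; it is $\pi$-ample because it restricts to an ample (hyperplane) class on every fibre. With this notation Definition \ref{def:BQtwist} reads $\mathbb{B}_-(E)=\pi(\mathbb{B}_-(\xi))$ and $\mathbb{B}_+(E\langle A_m\rangle)=\pi(\mathbb{B}_+(\xi+\pi^*A_m))$, and since $\pi$ is a morphism it commutes with the relevant unions. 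Hence it suffices to prove, inside $\mathbb{P}(E)$, that $\mathbb{B}_-(\xi)=\bigcup_m\mathbb{B}_+(\xi+\pi^*A_m)$.

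The point to watch is that $\pi^*A_m$ is only nef, not ample, on $\mathbb{P}(E)$, so Remark \ref{rmk:RealB}.(5) does not apply to the sequence $\pi^*A_m$ directly. I would first perturb in the fibre direction: for each $m$, choose a positive number $a_m$ with $a_m\to 0$ such that $D_m:=a_m\xi+\pi^*A_m$ is ample on $\mathbb{P}(E)$ — this is possible because $\xi$ is $\pi$-ample, so adding a sufficiently small positive multiple of $\xi$ to the pullback of an ample class produces an ample class. Then $D_m\to 0$, and Remark \ref{rmk:RealB}.(5) yields $\mathbb{B}_-(\xi)=\bigcup_m\mathbb{B}_+(\xi+D_m)$. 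Writing $\xi+D_m=(1+a_m)\bigl(\xi+\tfrac{1}{1+a_m}\pi^*A_m\bigr)$ and using homogeneity (Remark \ref{rmk:RealB}.(1)) identifies the terms, so $\mathbb{B}_-(\xi)=\bigcup_m\mathbb{B}_+\bigl(\xi+\tfrac{1}{1+a_m}\pi^*A_m\bigr)$. It then remains to exchange the twist $\tfrac{1}{1+a_m}\pi^*A_m$ for an honest $\pi^*A_j$, in both directions.

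For $\bigcup_m\mathbb{B}_+(\xi+\pi^*A_m)\subseteq\mathbb{B}_-(\xi)$, decompose $\xi+\pi^*A_m=\bigl(\xi+\tfrac{1}{1+a_m}\pi^*A_m\bigr)+\tfrac{a_m}{1+a_m}\pi^*A_m$ with the last summand nef; then Remark \ref{rmk:RealB}.(a), whose $\mathbb{B}_-$ term vanishes by Remark \ref{rmk:RealB}.(2), gives $\mathbb{B}_+(\xi+\pi^*A_m)\subseteq\mathbb{B}_+\bigl(\xi+\tfrac{1}{1+a_m}\pi^*A_m\bigr)\subseteq\mathbb{B}_-(\xi)$. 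For the reverse inclusion, fix $m$; since the ample cone of $X$ is open and $A_j\to 0$, there is an index $N_m$ with $\tfrac{1}{1+a_m}A_m-A_{N_m}$ ample on $X$, hence $\tfrac{1}{1+a_m}\pi^*A_m-\pi^*A_{N_m}$ nef on $\mathbb{P}(E)$, and Remark \ref{rmk:RealB}.(a) again gives $\mathbb{B}_+\bigl(\xi+\tfrac{1}{1+a_m}\pi^*A_m\bigr)\subseteq\mathbb{B}_+(\xi+\pi^*A_{N_m})$. Taking unions over $m$ establishes the equality on $\mathbb{P}(E)$; pushing forward by $\pi$ and invoking Definition \ref{def:BQtwist} once more recovers the statement on $X$.

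The main obstacle is precisely the non-ampleness of $\pi^*A_m$ on $\mathbb{P}(E)$, which blocks a direct citation of the ample-perturbation results of Remark \ref{rmk:RealB}. The remedy is the fibrewise perturbation by $a_m\xi$ to enter the ample cone, followed by the bookkeeping above, which crucially uses the nef (rather than ample) strengthening in Remark \ref{rmk:RealB}.(a) to strip that perturbation off again. Everything else is routine manipulation of unions, positive rescalings, and homogeneity.
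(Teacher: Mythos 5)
Your proof is correct and takes essentially the same route as the paper's: both perturb $\pi^*A_m$ by a small multiple $a_m\xi$ of the $\pi$-ample class $\xi$ to get an ample sequence $D_m\to 0$ on $\mathbb P(E)$, apply Remark \ref{rmk:RealB}.(5), rescale by $1+a_m$, and then use Remark \ref{rmk:RealB}.(a) (with nef increments) to trade $\tfrac{1}{1+a_m}\pi^*A_m$ for $\pi^*A_m$ and $\pi^*A_{N_m}$ in the two directions. You merely make the homogeneity step and the forward inclusion more explicit than the paper does.
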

				
				\subsection{Base loci and pullback}

				We will make use of the following elementary observation: 
				
				\begin{lemma}\label{lem:settheory}
					Let 
					\[\xymatrix{A'\ar[r]_{f'}\ar[d]_{g'}&B'\ar[d]^{g}\\ A\ar[r]^f&B}\]
					be a commutative diagram of sets and let $S\subseteq B'$ be a subset. Then
					\begin{enumerate}
						\item $g'({f'}^{-1}\{S\})\subseteq f^{-1}\{g(S)\}$
						\item If the diagram is cartesian and $f$ is surjective, then equality holds.
					\end{enumerate}
				\end{lemma}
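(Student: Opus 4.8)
The plan is to prove both parts by a direct element chase.

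For part (1), I would start from an element $a'\in g'({f'}^{-1}\{S\})$, write $a'=g'(x')$ with $x'\in A'$ and $f'(x')\in S$, and then apply $f$: by commutativity of the square, $f(a')=f(g'(x'))=g(f'(x'))$, which lies in $g(S)$ because $f'(x')\in S$. Hence $a'\in f^{-1}\{g(S)\}$. This direction uses nothing but commutativity.

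For part (2), by part (1) it remains to establish the reverse inclusion $f^{-1}\{g(S)\}\subseteq g'({f'}^{-1}\{S\})$, and here the point is to spell out the cartesian hypothesis: in the category of sets one may identify $A'$ with $\{(a,b')\in A\times B'\mid f(a)=g(b')\}$, with $g'$ and $f'$ the two projections. Then, given $a\in f^{-1}\{g(S)\}$, I would pick $s\in S$ with $f(a)=g(s)$ and form the pair $x':=(a,s)$; it lies in $A'$, it maps to $s\in S$ under $f'$ (so $x'\in{f'}^{-1}\{S\}$) and to $a$ under $g'$. Therefore $a\in g'({f'}^{-1}\{S\})$.

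I do not expect a genuine obstacle here: once the meaning of ``cartesian'' is unwound, the statement is formal bookkeeping. The only point worth flagging is that the surjectivity of $f$ is not actually used in this set-theoretic argument; it is harmless and is presumably recorded with an eye toward the geometric applications, where the relevant square is cartesian in the category of schemes and one wants the equality to persist on underlying topological spaces — there the nonemptiness of the fiber of $A'\to A$ over a point $a$ with $f(a)=g(s)$ comes from the fact that a tensor product of field extensions of a common field is nonzero.
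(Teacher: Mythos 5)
Your argument is correct and is the natural element chase; the paper itself states this lemma without proof (as ``the following elementary observation''), so there is no alternative argument to compare against. Your observation that surjectivity of $f$ is not actually used is also correct: once $A'$ is identified with $\{(a,b')\in A\times B':f(a)=g(b')\}$, both $g'\bigl({f'}^{-1}\{S\}\bigr)$ and $f^{-1}\{g(S)\}$ unwind to $\{a\in A:\exists\,s\in S,\ f(a)=g(s)\}$, so equality holds for any cartesian square of sets. One small caveat on your closing speculation: the paper only invokes this lemma (in Remark~\ref{rmk:basepullback}) for the inclusion of part~(1) applied to the fiber product of projectivized bundles, where the underlying set of the scheme-theoretic fiber product really is the set-theoretic fiber product of the underlying sets (since the fibers of $\pi$ are projective spaces over the residue field), so the Set-level statement is genuinely what is used; the surjectivity hypothesis appears to be simply a harmless extra assumption carried along because all the morphisms $f$ in the paper's applications are surjective, rather than being needed for any tensor-product-of-fields nonvanishing.
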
 
				
				\begin{remark}\label{rmk:basepullback}Let $f:X\to Y$ be a morphism of projective varieties. For $E$ a vector bundle on $Y$ by pulling back the evaluation sequence we see that 
					\[\text{Bs}(f^*E)\subseteq f^{-1}\text{Bs}(E)\quad\text{ and }\quad\bb B(f^*E)\subseteq f^{-1}\bb B(E).\] 
					Equality holds if $f_*\cal O_X=\cal O_Y$, i.e., $f$ is an algebraic fiber space.
					For $\bb R$-classes $L\in N^1(Y)$ we also have
					\[\bb B_-(f^*L)\subseteq f^{-1}\bb B_-(L).\]
					(We have $\bb B_-(L)=\bigcup_{A\text{ ample, }L+A\ \bb Q\text{-divisor}}\bb B(L+A)$, hence $f^{-1}\bb B_-(L)=\bigcup_Af^{-1}\bb B(L+A)$. On the other hand $\bb B_-(f^*L)=\bigcup_{H\text{ ample, }f^*L+H\ \bb Q\text{-divisor}}\bb B(f^*L+H)$. For every $H$ ample on $X$ we can find $A$ ample on $Y$ such that $H-f^*A$ is an ample $\bb Q$-class. Then $\bb B(f^*L+H)\subseteq\bb B(f^*(L+A))\subseteq f^{-1}\bb B(L+A)$. )
					
					For $\bb R$-twisted bundles we obtain similarly
					\[\bb B_-(f^*E\langle f^*T\rangle)\subseteq f^{-1}\bb B_-(E\langle T\rangle)\]
					by using Definition \ref{def:BQtwist} together with Lemma \ref{lem:settheory} for the fiber product diagram
					\[\xymatrix{\bb P(f^*E\langle f^*T\rangle)\ar[r]\ar[d] & \bb P(E\langle T\rangle)\ar[d]\\ X\ar[r]^{f}&Y}\]
				\end{remark}

				\begin{proposition}\label{prop:restrictedbasepullback}Let $f:X\to Y$ be a surjective morphism of projective varieties. Assume that at least one of the following conditions holds:
					\begin{enumerate}
						\item $Y$ is smooth, or
						\item $Y$ is normal and $f$ is equidimensional.
					\end{enumerate}
					Let $E$ be a $\bb R$-twisted bundle on $Y$. Then
					\[\bb B_-(f^*E)=f^{-1}\bb B_-(E).\]
					In particular $E$ is V-psef if and only if $f^*E$ is V-psef.
				\end{proposition}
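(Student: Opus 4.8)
The inclusion $\bb B_-(f^*E)\subseteq f^{-1}\bb B_-(E)$ is free from Remark \ref{rmk:basepullback}, so the plan is to establish the reverse inclusion. First I would reduce to the case $X$ normal by precomposing with the normalization $\nu\colon\widetilde X\to X$: since $\nu$ is finite and surjective, $f\circ\nu$ is again surjective, and it still satisfies the hypotheses---in case (1) the condition is on $Y$ alone, and in case (2) $f\circ\nu$ is equidimensional because $f$ is equidimensional and $\nu$ is finite. Once the proposition is known for morphisms with normal source (which is what the rest of the argument provides), we get $\bb B_-(\nu^*f^*E)=\nu^{-1}f^{-1}\bb B_-(E)$; combining this with $\bb B_-(\nu^*f^*E)\subseteq\nu^{-1}\bb B_-(f^*E)$ from Remark \ref{rmk:basepullback} and applying the surjection $\nu$ to both sides yields $f^{-1}\bb B_-(E)\subseteq\bb B_-(f^*E)$.

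Assuming $X$ normal, here is the chain I would run for a point $x\in f^{-1}\bb B_-(E)$. By Lemma \ref{5.2} on $Y$, $\bb B_-(E)$ is the union of the $\bb B_+(E\langle A\rangle)$ over ample classes $A$ on $Y$, so $f(x)\in\bb B_+(E\langle A\rangle)$ for some such $A$. Next I would invoke the ``easy half'' of the pullback formula for augmented base loci, $f^{-1}\bb B_+(E\langle A\rangle)\subseteq\bb B_+(f^*E\langle f^*A\rangle)$, which follows from Proposition \ref{prop:B+pull}; hence $x\in\bb B_+(f^*E\langle f^*A\rangle)$. Since $A$ is ample on $Y$, the class $f^*A$ is nef on $X$, so for a sufficiently small ample class $H$ on $X$ the twist $f^*A+H$ is ample, and Corollary \ref{cor:RealB+}(1), applied with $T=f^*A+H$ and perturbation $T'=H$, gives $\bb B_+(f^*E\langle f^*A\rangle)\subseteq\bb B_+(f^*E\langle f^*A+H\rangle)$. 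Finally, since $f^*A+H$ is ample, Lemma \ref{5.2} on $X$ gives $\bb B_+(f^*E\langle f^*A+H\rangle)\subseteq\bb B_-(f^*E)$. Chaining the three inclusions yields $x\in\bb B_-(f^*E)$, completing the reverse inclusion.

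The main obstacle is the input $f^{-1}\bb B_+(E\langle A\rangle)\subseteq\bb B_+(f^*E\langle f^*A\rangle)$. If Proposition \ref{prop:B+pull} is already available this step is immediate; if one wants a self-contained argument, this is precisely where the hypothesis on $f$ is genuinely used and where the real work lives. I would handle it by descending to $\bb P(E)$ and $\bb P(f^*E)$, which fit into a cartesian square, so that the hypothesis transfers to $\widetilde f\colon\bb P(f^*E)\to\bb P(E)$: the fibres of $\widetilde f$ coincide with those of $f$, and $\bb P(E)$ is smooth, resp.\ normal, when $Y$ is. This reduces matters to an $\bb R$-divisor class $D$ on $\bb P(E)$; there, running a Nakayama $\sigma$-decomposition of the big perturbation $\widetilde f^*D+(\text{small ample})$, equidimensionality of $\widetilde f$ forces every prime divisor of $\bb P(f^*E)$ either to dominate $\bb P(E)$---hence to meet the general fibre, along which $\widetilde f^*D$ is numerically trivial, so it cannot lie in $\bb B_-(\widetilde f^*D)$---or to be a component of the preimage of a prime divisor downstairs, whose $\sigma$-invariant it inherits. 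A secondary point to keep in mind is that $\bb B_-$ is not known to be closed; the argument above is arranged so as to pass only through augmented base loci of ample perturbations (the role of Lemma \ref{5.2} and Corollary \ref{cor:RealB+}), never through a single stable base locus.
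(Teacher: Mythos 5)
Your high-level chain of inclusions (reducing via Lemma \ref{5.2} to the inclusion $f^{-1}\bb B_+(E\langle A\rangle)\subseteq\bb B_+(f^*E\langle f^*A\rangle)$ and perturbing with Corollary \ref{cor:RealB+}) is internally consistent in principle, and the initial reduction to normal $X$ by precomposing with the normalization is correct. But the argument as proposed has a circularity and then a genuine gap in the replacement offered.

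First, the circularity: you write that the main input ``follows from Proposition \ref{prop:B+pull}.'' But the paper's proof of Proposition \ref{prop:B+pull} uses the finite case of Proposition \ref{prop:restrictedbasepullback} in the middle equality of the chain $\bb B_+(h^*L)=\bb B_-(h^*L-\tfrac 1mh^*A)=h^{-1}\bb B_-(L-\tfrac 1mA)=h^{-1}\bb B_+(L)$ applied to the finite part $h$ of the Stein factorization. Thus Proposition \ref{prop:B+pull} is downstream of the very statement you are proving, and invoking it here is not available. You do flag this, but it means your whole first step rests on the alternative $\sigma$-decomposition argument.

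Second, that alternative is a sketch with several real gaps. (a) Nakayama's $\sigma$-decomposition, and the description of $\bb B_-$ via divisorial $\sigma$-invariants, lives on smooth varieties; $\bb P(f^*E)$ is only normal (after normalization), and even normality may fail before that. (b) Your dichotomy --- every prime divisor of $\bb P(f^*E)$ either dominates $\bb P(E)$ or is a component of the preimage of a prime divisor downstairs --- is specific to the equidimensional case and cannot be run in case (1) when $Y$ is smooth but $f$ is not equidimensional (exceptional divisors over smaller centers are neither). (c) Even on smooth varieties, $\bb B_-$ (and hence $\bb B_+$ of a big perturbation) is not exhausted by its divisorial part, so controlling $\sigma_\Gamma$ for prime divisors $\Gamma$ does not by itself determine $\bb B_-$; the paper's birational step instead uses the full strength of \cite{ELMNP}*{Proposition 2.8} about asymptotic orders along a divisorial valuation with center a point. (d) The divisor class $D=\xi+\pi^*A$ you are pulling back is big but not nef in general, so a Nakamaye/Null-locus shortcut is also unavailable.

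By contrast, the paper's proof of Proposition \ref{prop:restrictedbasepullback} avoids $\bb B_+$ altogether and argues directly on $\bb B_-$: it reduces to generically finite (resp.\ finite) $f$ by slicing $X$ with subvarieties of dimension $\dim Y$ through every point; the finite normal case is handled by a Galois closure plus a $G$-equivariant pushforward of a general section that misses the fiber; the birational case with smooth target uses asymptotic orders and \cite{ELMNP}*{Proposition 2.8}; and for case (1) these are glued together via a flattening $Y'\to Y$ and a resolution $\widetilde Y\to Y'$, precisely because the Stein factorization alone does not produce a smooth intermediate variety. The logical order matters: the paper proves the $\bb B_-$ statement first and then derives the $\bb B_+$ pullback from it; your proposal reverses that order and, as written, does not supply an independent proof of the $\bb B_+$ input it needs.
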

				\noindent The ($\bb Q$-twisted) line bundle case of (1) is proved with analytic methods in \cite{DL21}*{Lemma 2.2}. Our proof is algebraic.
				\begin{proof}
					The arbitrary rank case follows from the rank 1 case as in Remark \ref{rmk:basepullback}. We now focus on the rank 1 case. Assume that $L$ is an $\bb R$-Cartier $\bb R$-divisor. From Remark \ref{rmk:RealB}.(c) we may assume that $L$ is a $\bb Q$-Cartier $\bb Q$-divisor. By the homogeneity of $\bb B_-$ we may assume that $L$ is a Cartier divisor.
					We will make repeated use of an elementary consequence of Remark \ref{rmk:basepullback}: Let $g:Z\to X$ be a morphism from a projective variety. Then
					\begin{enumerate}[(i)]
						\item If $g$ is surjective, then the conclusion of $\bb B_-$ commuting with pullback holds for $(f\circ g,L)$ if and only if it holds for both $(f,L)$ and $(g,L)$.
						\item If $g$ is a closed immersion, then if the conclusion holds for $(f\circ g,L)$, then $\bb B_-(f^*L)\cap Z=f^{-1}\bb B_-(L)\cap Z$.
					\end{enumerate}
					We first reduce to the generically finite case for $f$, respectively the finite case when $f$ is equidimensional. Note that $X$ is covered by varieties $Z$ of dimension $\dim Y$ each of which surjects onto $Y$. 
					For instance take general complete intersections through any point $x\in X$. 
					Two subsets of $X$ are equal precisely when they have equal intersection with every such $Z$. Then apply observation (ii). 
					
					When $f$ is equidimensional we want to cover $X$ by $Z$ that are furthermore finite over $Y$. \cite{FLmorphisms}*{Lemma 4.9} proves that this can be achieved at least generally by complete intersections of high degree. The argument there can be tweaked to prove that furthermore we can construct such $Z$ through every $x\in X$ not just through general $x$. It boils down to proving that if $\cal Z_i\to\cal U_i$ are finitely many flat families of positive dimensional (irreducible) subvarieties of $X$ with $\cal Z_i$ mapping generically finitely onto its image in $X$, then for every ample divisor $H$ on $X$ there exists $d$ such that for every $x\in X$ there exists an element of $|dH|$ passing through $x$ and that meets all 
					subvarieties $(Z_i)_{u_i}$ properly.
					
					Next we prove the finite case for $f$ when $X$ and $Y$ are normal, and we prove the case where $f$ is birational with $Y$ smooth.
					\smallskip
					
					\noindent\textit{The finite case.} Assume that $f:X\to Y$ is a finite morphism between normal varieties.
					With $K(X)$ denoting the function field of $X$, let $K'$ be a Galois closure of $K(X)$ over $K(Y)$, and let $Z$ be the normalization of $X$ in $K'$ so that $K(Z)=K'$ and we have a finite map $g:Z\to X$ of normal varieties. Then $G\coloneqq \text{Gal}(K'/K(Y))$ acts on $Z$. The action restricts to a transitive action on the fibers of $f\circ g$ and $Y=Z/G$.
					By (i) we may assume that $f$ is the quotient map and $X=Z$.
					
					If $M$ is any line bundle on $Y$, then $f^*M$ is $G$-equivariant thus $G$ acts on the sections of $f^*M$. Let $A$ be an ample divisor on $Y$. Then $H\coloneqq f^*A$ is ample on $X$. If $x\not\in\bb B_-(f^*L)$, then for all $q$ we can find $m=m(q)$ such that $mqf^*L+mH=f^*(mqL+mA)$ has a section $s$ that does not vanish at $x$. Put $y\coloneqq f(x)$. Using the transitivity of the action of the finite group $G$ on $f^{-1}\{y\}$, we deduce that a general section of $f^*(mqL+mA)$ does not vanish at any point of $f^{-1}\{y\}$. The vanishing of such a section is a divisor $D\sim f^*(mqL+mA)$. Its cycle pushforward to $Y$ is an effective divisor linearly equivalent to $|G|(mqL+mA)$ that does not pass through $y$. In particular $y\not\in\bb B_-(L)$ giving the nontrivial inclusion $\bb B_-(f^*L)\supseteq f^{-1}\bb B_-(L)$.
					\smallskip
					
					Part (2) follows by using the reduction to the finite case and observation (i) for the normalizations $Z^{\nu}\to Z\to Y$. 
					\smallskip

					\noindent\textit{The birational case with $Y$ smooth.}
					Up to taking a resolution of $X$, using (i), we may assume that $X$ is also smooth. Let $x\not\in\bb B_-(f^*L)$. We aim to prove that $y\coloneqq f(x)$ is not in $\bb B_-(L)$. 
					
					Let $A$ be an ample divisor on $Y$. Up to scaling $A$, we may assume $f^*A=H+F$ for some ample divisor $H$ and effective divisor $F$ on $X$.
					Then $H_t\coloneqq f^*A-tF$ is ample for all $0<t<1$.
					Since the diminished base locus is independent of the polarization, for every $q$ and $t$ there exists $m=m(q,t)$ such that $x$ is not in the base locus of $mqf^*L+mH_t$. In particular there exists an effective divisor $D_m$ that does not pass through $x$ such that $D_m+mtF\sim f^*(mqL+mA)$. Thus the linear series $|f^*(mqL+mA)|$ contains effective divisors with multiplicity at most $mt\cdotp\mult_xF$.
					
					From the discussion above, by dividing by $m$ and then taking $t$ to 0 it follows that $|f^*(qL+A)|_{\bb Q}$ has (asymptotic) order $0$ at $x$. This order is a divisorial valuation on $K(X)=K(Y)$, the valuation corresponding to the exceptional divisor $E$ of blowing-up $x$. It has center $x$ on $X$ and $y= f(x)$ on $Y$. Since $f$ is an algebraic fiber space, we have $|f^*(qL+A)|_{\bb Q}=f^*(|qL+A|_{\bb Q})$, hence $|qL+A|_{\bb Q}$ also has order 0 at $y$. It follows from \cite{ELMNP}*{Proposition 2.8}, since $qL+A$ is big, that $y\not\in\bb B_-(qL+A)$, i.e., not in $\bb B_-(L+\frac 1qA)$ for any $q\geq 1$. But it is easy to see that $\bb B_-(L)=\bigcup_{q\geq 1}\bb B_-(L+\frac 1qA)$. In conclusion $y\not\in\bb B_-(L)$.
					\smallskip
					
					At this point, to prove (1), it is tempting to start from the Stein factorization $X\to Z\to Y$ and normalize $X^{\nu}\to Z^{\nu}\to Y$. However there does not appear a way to do so that produces $Z^{\nu}$ smooth. We used smoothness in the birational case for the application of \cite{ELMNP}*{Proposition 2.8}. Instead we will use flattenings.
					
					There exists a birational morphism $Y'\to Y$ such that if $X'$ denotes the unique irreducible component of $Y'\times_YX$ that dominates $Y'$, then the induced $f':X'\to Y'$ is flat, in particular finite. Let $\widetilde Y\to Y'$ be a resolution of singularities. Let $\widetilde X$ be the unique irreducible component of $\widetilde Y\times_{Y'}X'$ that dominates $\widetilde Y$. It is not clear that $\widetilde X\to\widetilde Y$ is flat, but it is finite. Finally let $\widetilde X^{\nu}$ denote the normalization. The induced $\widetilde X^{\nu}\to\widetilde Y$ is finite between normal varieties, while $\widetilde Y\to Y$ is birational with $Y$ smooth. From the finite and birational cases we deduce that the conclusion holds for $\widetilde X^{\nu}\to Y$ by observation (i). Then for the same reason it also holds for $X\to Y$.
				\end{proof}

				The corresponding statement for augmented base loci is slightly different.
				
				\begin{proposition}\label{prop:B+pull}
					Let $f:X\to Y$ be a surjective morphism of normal projective varieties. Denote by $\text{NF}(f)$ the \emph{nonfinite locus}, the union of curves $C\subseteq X$ contracted by $f$. Let $E$ be a ($\bb R$-twisted) vector bundle on $Y$. Then 
					\[\bb B_+(f^*E)=f^{-1}\bb B_+(E)\bigcup\text{NF}(f).\]
					In particular, if $f$ is generically finite, then $E$ is V-big if and only if $f^*E$ is V-big.
				\end{proposition}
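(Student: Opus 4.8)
The plan is to reduce to line bundles on $Y$, then to the case where $f$ is generically finite, and finally, via Stein factorization, to the finite and birational cases treated separately. First I would reduce to line bundles. Writing $\pi\colon\bb P(E)\to Y$ and $\pi'\colon\bb P(f^*E)\to X$ for the projections and $F\colon\bb P(f^*E)\to\bb P(E)$ for the induced morphism, the square with rows $F,f$ and columns $\pi',\pi$ is cartesian because $\bb P(f^*E)=X\times_Y\bb P(E)$; moreover $F^*(\xi+\pi^*T)$ is the Serre class of $\bb P(f^*E)$ twisted by $\pi'^*f^*T$, and a curve $C\subseteq\bb P(f^*E)$ is contracted by $F$ exactly when it lies in a fibre of $F$, equivalently when $\pi'(C)$ is a curve lying in a fibre of $f$; hence $\text{NF}(F)=\pi'^{-1}\text{NF}(f)$. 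Granting the statement for the line bundle $\xi+\pi^*T$ on the normal projective variety $\bb P(E)$ and the surjection $F$, Definition \ref{def:BQtwist}, Lemma \ref{lem:settheory}(2) applied to the cartesian square, and the identity $\pi'\bigl(\pi'^{-1}\text{NF}(f)\bigr)=\text{NF}(f)$ give the statement for $E\langle T\rangle$. As in the proof of Proposition \ref{prop:restrictedbasepullback}, using the homogeneity and perturbation properties of Remark \ref{rmk:RealB} (and that $\text{NF}(f)$ is independent of the divisor) one may then assume the divisor on $Y$ is a Cartier divisor, which I call $L$.

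Next I would dispose of the inclusion $\text{NF}(f)\subseteq\bb B_+(f^*L)$ directly: a contracted curve $C$ has $f^*L\cdot C=0$, while if $C\not\subseteq\bb B_+(f^*L)$ one could write $f^*L=A'+E'$ with $A'$ ample, $E'\geq 0$ effective and $C\not\subseteq\Supp E'$, forcing $f^*L\cdot C>0$. For the two remaining inclusions I would test against a general complete intersection subvariety $Z$ through an arbitrarily prescribed point $x$, exactly as in Proposition \ref{prop:restrictedbasepullback}: using the standard equality $\bb B_+(f^*L)\cap Z=\bb B_+((f^*L)|_Z)$ for general $Z$, that a curve in $Z$ contracted by $f|_Z$ is contracted by $f$, and that $(f|_Z)^{-1}\bb B_+(L)=f^{-1}\bb B_+(L)\cap Z$, the conclusion for the generically finite morphism $f|_Z\colon Z\to Y$ implies both inclusions for $f$ at $x$. (As in \cite{FLmorphisms} one arranges $Z$ to pass through $x$ while remaining general enough, and passes to its normalization, which may be taken normal.) This reduces everything to $f$ generically finite and surjective between normal projective varieties, where I would take the Stein factorization $f\colon X\xrightarrow{g}Z\xrightarrow{h}Y$: then $Z$ is normal, $g$ is birational (generically finite with connected fibres), $h$ is finite, and from $\text{NF}(h\circ g)=\text{NF}(g)\cup g^{-1}\text{NF}(h)$ it follows that the conclusion for $g$ and for $h$ imply it for $f$.

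For $h$ finite one has $\text{NF}(h)=\emptyset$, and the argument is short: pick ample $\bb Q$-divisors $A_m\to 0$ on $Y$, so $h^*A_m$ are ample on $Z$ and tend to $0$; by Remark \ref{rmk:RealB}(7), for $m\gg 0$ one has $\bb B_+(h^*L)=\bb B_-\bigl(h^*(L-A_m)\bigr)$ on $Z$ and $\bb B_-(L-A_m)=\bb B_+(L)$ on $Y$, while Proposition \ref{prop:restrictedbasepullback}(2) (applicable as $h$ is finite, hence equidimensional, and $Y$ is normal) gives $\bb B_-\bigl(h^*(L-A_m)\bigr)=h^{-1}\bb B_-(L-A_m)$; combining these yields $\bb B_+(h^*L)=h^{-1}\bb B_+(L)$.

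For $g$ birational, $\text{NF}(g)$ equals the exceptional locus $\text{Exc}(g)$ (which, for a proper birational morphism with normal source, is swept out by contracted curves); for any ample $A_Z$ on $Z$ the class $g^*A_Z$ is nef and big with $\Null(g^*A_Z)=\{V:\dim g(V)<\dim V\}=\text{Exc}(g)$, so Nakamaye's theorem (\cite{ELMNP}) gives $\bb B_+(g^*A_Z)=\text{NF}(g)$. This immediately gives the inclusion ``$\subseteq$'': for $x\notin g^{-1}\bb B_+(L)\cup\text{NF}(g)$ write $L=A_Z+E$ with $A_Z$ ample, $E\geq 0$ and $g(x)\notin\Supp E$, and $g^*A_Z=A'+E'$ with $A'$ ample, $E'\geq 0$ and $x\notin\Supp E'$; then $g^*L=A'+(E'+g^*E)$ shows $x\notin\bb B_+(g^*L)$. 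Together with $\text{NF}(g)\subseteq\bb B_+(g^*L)$, the only thing left is $g^{-1}\bb B_+(L)\subseteq\bb B_+(g^*L)$, and this is where I expect the real work to be. Over a smooth base this is precisely the birational case of the statement, established by algebraic methods in \cite{BBP}; in the general normal case I would deduce it from that case by the flattening-and-resolution scheme used at the end of the proof of Proposition \ref{prop:restrictedbasepullback} (flatten $Z$ so that the dominating component maps finitely, resolve, normalize, and reassemble using $\text{NF}(h\circ g)=\text{NF}(g)\cup g^{-1}\text{NF}(h)$ together with the finite and smooth-base birational cases). Finally, since $\text{NF}(f)$ is a proper closed subset of the irreducible variety $X$ whenever $f$ is generically finite, the displayed equality yields at once that $E$ is V-big if and only if $f^*E$ is.
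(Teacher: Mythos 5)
Your overall architecture matches the paper's at a high level: reduce to line bundles, prove $\text{NF}(f)\subseteq\bb B_+(f^*L)$ directly, reduce to $f$ generically finite, Stein-factor as finite $\circ$ birational, and handle the finite piece using Remark~\ref{rmk:RealB}(7) together with the finite case of Proposition~\ref{prop:restrictedbasepullback}. But two of your steps are off.

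The reduction to the generically finite case by slicing is broken. You invoke an ``equality'' $\bb B_+(f^*L)\cap Z=\bb B_+\bigl((f^*L)|_Z\bigr)$ for general complete intersections $Z$ through $x$, and this fails exactly when you need it: if $f$ is not generically finite then $f^*L$ is not big, so $\bb B_+(f^*L)=X$ and $\bb B_+(f^*L)\cap Z=Z$, whereas a general complete intersection $Z$ of dimension $\dim Y$ maps generically finitely to $Y$, so $(f^*L)|_Z=(f|_Z)^*L$ is big (for $L$ big) and $\bb B_+\bigl((f^*L)|_Z\bigr)$ is a proper closed subset of $Z$. The analogous slicing in Proposition~\ref{prop:restrictedbasepullback} works for $\bb B_-$ because the one-sided inclusion $\bb B_-(g^*M)\subseteq g^{-1}\bb B_-(M)$ holds for \emph{every} morphism $g$, in particular for closed immersions; for $\bb B_+$ and closed immersions the inclusion you need points the wrong way. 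You would also need $\text{NF}(f|_Z)=\text{NF}(f)\cap Z$, which is not obvious (a point of $\text{NF}(f)\cap Z$ lies on a curve contracted by $f$, but not necessarily on one contained in $Z$). The fix is to drop the slicing entirely: if $f$ is not generically finite then $\text{NF}(f)=X$ and $f^*L$ is not big, so both sides of the asserted equality equal $X$ and there is nothing to prove. That is all the paper needs to say here.

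Your estimate of where the work lies in the birational case is inverted. You expect $g^{-1}\bb B_+(L)\subseteq\bb B_+(g^*L)$ to require \cite{BBP} plus a flattening-and-resolution argument, but for $g$ birational between normal varieties this inclusion is elementary and no smoothness is needed: $g$ is a fiber space, so $g^{-1}\bb B(M)=\bb B(g^*M)$, and with $H$ ample on $X$, $A$ ample on $Z$ and $H-g^*A$ ample, for $m\gg 0$ one has $g^{-1}\bb B_+(L)=g^{-1}\bb B(L-\tfrac1m A)=\bb B(g^*L-\tfrac1m g^*A)\subseteq\bb B(g^*L-\tfrac1m H)\cup\bb B\bigl(\tfrac1m(H-g^*A)\bigr)=\bb B(g^*L-\tfrac1m H)=\bb B_+(g^*L)$. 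Your other inclusion in the birational case, via Nakamaye's theorem (in Birkar's version for normal varieties) applied to $g^*A_Z$ and a decomposition $L=A_Z+E$ with $g(x)\notin\Supp E$, is a perfectly good alternative to the paper's, which instead routes through $\bb B_-$ using Remark~\ref{rmk:RealB}(7) and Remark~\ref{rmk:basepullback}.
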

				
				\noindent The finite case for ($\bb Q$-twisted) line bundles also appears in \cite{DL21}*{Lemma 2.3}. The birational case is treated in \cite{BBP}*{Proposition 2.3}. See also the subsequent discussions in \cites{BCL,Lopez}. 
				
				\begin{proof}
					Note that $NF(f)$ respects base change. 
					As in Proposition \ref{prop:restrictedbasepullback}, we reduce to the $\bb R$-Cartier $\bb R$-divisor case $L$.
					From Remark \ref{rmk:RealB}.(a) we may assume that $L$ is a $\bb Q$-Cartier $\bb Q$-divisor and by the homogeneity of $\bb B_+$ we may assume that $L$ is Cartier.
						If $f$ is not generically finite, then $f^*L$ is not big and $NF(f)=X$. The conclusion again follows easily.
					Assume henceforth that $f$ is generically finite.
					
					Let $X\overset{g}{\to}Z\overset{h}{\to}Y$ be the Stein factorization of $f$ with $Z$ normal. Then $g$ is birational and $h$ is finite, Furthermore $NF(f)=NF(g)$.
					Fix $A$ an ample divisor on $Y$. Then $h^*A$ is also ample. for all large enough $m$ we have
					\[\bb B_+(h^*L)=\bb B_-(h^*L-\frac 1mh^*A)=h^{-1}\bb B_-(L-\frac 1mA)=h^{-1}\bb B_+(L).\]
					We have used Remark \ref{rmk:RealB}.(7) for the first and last equality, and the finite case of Proposition \ref{prop:restrictedbasepullback} for the middle equality. Thus we have reduced to the birational case for $f$. At this point one can cite \cite{BBP}*{Proposition 2.2}, but we give a proof here.
					
					If $H$ is ample on $X$, and $\epsilon>0$ then $f^*L-\epsilon H$ is negative on all curves $C$ contracted by $f$. We deduce that $\text{NF}(f)\subseteq\bb B_+(f^*L)$.
					If $L$ is nef, then $\bb B_+(L)$ is the union of subvarieties $V$ of $Y$ such that $(L^{\dim V}\cdot V)=0$ by the main result of \cite{Nakamaye} in the smooth case, and by \cite{Birkar}*{Theorem 1.4} in the general projective case over any field.
					After applying this to $f^*L$, the result follows from the projection formula. In particular, if $A$ is ample on $Y$, then $\bb B_+(f^*A)=\text{NF}(f)$.
					
					Let $x\not\in\text{NF}(f)$. From the previous paragraph we can write $f^*A=H+F$ where $H$ is an ample $\bb Q$-divisor on $X$ and $F$ is effective, with $x\not\in\text{Supp}(F)$. For all large $m$ we have
					\[\bb B_+(f^*L)=\bb B_-(f^*L-\frac 1mH)\subseteq\bb B_-(f^*(L-\frac 1mA))\bigcup F\subseteq f^{-1}\bb B_-(L-\frac 1mA)\bigcup F=f^{-1}\bb B_+(L)\bigcup F.\]
					We used Remark \ref{rmk:RealB}.(7) for the first and last equalities. The first inclusion comes from Lemma \ref{lem:tensorbasepm}, and the second from Remark \ref{rmk:basepullback}.
					As we vary $x$, we obtain $\bb B_+(f^*L)\subseteq f^{-1}\bb B_+(L)\bigcup\text{NF}(f).$
					It is sufficient to prove $f^{-1}\bb B_+(L)\subseteq\bb B_+(f^*L)$.
					
					Since $f$ is birational and $X$ and $Y$ are normal, $f$ is a fiber space. 
					Let $H$ and $A$ be ample Cartier divisors on $X$ and $Y$ respectively such that $H-f^*A$ is ample.
					Then for sufficiently large $m$, we have 
					\[f^{-1}\bb B_+(L)=f^{-1}\bb B(L-\frac 1mA)=\bb B(f^*L-\frac 1mf^*A)\subseteq\bb B(f^*L-\frac 1mH)\bigcup\bb B(\frac 1m(H-f^*A))=\bb B(f^*L-\frac 1mH)=\bb B_+(f^*L).\]
				\end{proof}

				\begin{corollary}[Homogeneity for $\bb R$-twists]
					Let $T$ be a $\bb R$-Cartier $\bb R$-divisor and $E$ a vector bundle on the normal projective variety $X$. Then $\bb B_{\pm}(E\langle T\rangle)=\bb B_{\pm}(S^c(E\langle T\rangle))=\bb B_{\pm}(\bigotimes^c(E\langle T\rangle))$ for all $c\geq 1$.
				\end{corollary}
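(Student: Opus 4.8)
The plan is to reduce the statement, in three moves, to the untwisted homogeneity already proved in Corollary~\ref{cor:homogeneousB}. By the twist rules $\bb S_\lambda(E\langle T\rangle)=(\bb S_\lambda E)\langle mT\rangle$ for $\lambda\vdash m$ we have $S^c(E\langle T\rangle)=(S^cE)\langle cT\rangle$ and $\bigotimes^c(E\langle T\rangle)=(\bigotimes^cE)\langle cT\rangle$, so the task is to show $\bb B_{\pm}(E\langle T\rangle)=\bb B_{\pm}((S^cE)\langle cT\rangle)=\bb B_{\pm}((\bigotimes^cE)\langle cT\rangle)$. The first move is to note it suffices to treat $\bb B_+$: granting $\bb B_+$-homogeneity for every $\bb R$-twisted bundle and fixing ample classes $B_m\to0$ on $X$, Lemma~\ref{5.2} applied to $(S^cE)\langle cT\rangle$ with the ample null sequence $cB_m$ gives
\[\bb B_-\bigl((S^cE)\langle cT\rangle\bigr)=\bigcup_m\bb B_+\bigl((S^cE)\langle c(T+B_m)\rangle\bigr)=\bigcup_m\bb B_+\bigl(S^c(E\langle T+B_m\rangle)\bigr)=\bigcup_m\bb B_+\bigl(E\langle T+B_m\rangle\bigr),\]
which equals $\bb B_-(E\langle T\rangle)$ by Lemma~\ref{5.2} once more; the identical computation handles $\bigotimes^c$.

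The second move reduces the $\bb B_+$ statement to the case where $T$ is a $\bb Q$-Cartier $\bb Q$-divisor. Applying Corollary~\ref{cor:RealB+}(1) to each of the three bundles $E\langle T\rangle$, $(S^cE)\langle cT\rangle$, $(\bigotimes^cE)\langle cT\rangle$ yields a common $\varepsilon>0$. Since rational classes are dense and the ample cone is open, choose a $\bb Q$-Cartier $\bb Q$-divisor $T_0$ with $T-T_0$ ample and $c\,\|T-T_0\|<\varepsilon$. Then Corollary~\ref{cor:RealB+}(1) (in its ``equality for nef $T'$'' form) gives $\bb B_+(E\langle T\rangle)=\bb B_+(E\langle T_0\rangle)$, and likewise after replacing $T$ by $cT$ and $T_0$ by $cT_0$ in the $S^c$- and $\bigotimes^c$-twisted bundles. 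So it is enough to prove the identity for $E\langle T_0\rangle$ with $T_0$ rational.

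For the third move, fix $N\geq1$ with $D\coloneqq NT_0$ Cartier. A Bloch--Gieseker type construction (a composition of cyclic covers branched along general members of very ample linear systems) produces a finite surjective morphism $f\colon X'\to X$ from an irreducible normal projective variety with $f^*\cal O_X(D)\cong\cal O_{X'}(ND')$ for some Cartier divisor $D'$ on $X'$. Since $f^*T_0=\tfrac1Nf^*D=D'$ is Cartier, the twist rules turn the pullbacks into honest (untwisted) bundles on $X'$:
\[f^*(E\langle T_0\rangle)=(f^*E)(D'),\qquad f^*\bigl(S^c(E\langle T_0\rangle)\bigr)=S^c\bigl((f^*E)(D')\bigr),\qquad f^*\bigl(\textstyle\bigotimes^c(E\langle T_0\rangle)\bigr)=\textstyle\bigotimes^c\bigl((f^*E)(D')\bigr).\]
Corollary~\ref{cor:homogeneousB} applied to the vector bundle $(f^*E)(D')$ on $X'$ equates the three loci $\bb B_+$ on the right, hence equates the three loci $\bb B_+(f^*(\,\cdot\,))$. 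Because $f$ is finite we have $\text{NF}(f)=\emptyset$, so Proposition~\ref{prop:B+pull} gives $\bb B_+(f^*\cal G)=f^{-1}\bb B_+(\cal G)$ for each of the twisted bundles $\cal G$ in play; therefore $f^{-1}\bb B_+(E\langle T_0\rangle)=f^{-1}\bb B_+(S^c(E\langle T_0\rangle))=f^{-1}\bb B_+(\bigotimes^c(E\langle T_0\rangle))$. Since $f$ is surjective, $A=f(f^{-1}A)$ for every $A\subseteq X$, so $f^{-1}$ is injective on subsets and the three loci on $X$ coincide, as desired.

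The step I expect to demand the most care is the third move: producing the cover $f$ with $X'$ both normal and irreducible (Bertini on the branch divisors), and verifying that the pullback of the formal twist $E\langle T_0\rangle$ along $f$ is exactly the honest bundle $(f^*E)(D')$, compatibly with $S^c$ and $\bigotimes^c$. One could instead bypass Lemma~\ref{5.2} and run the cover argument for $\bb B_-$ directly, invoking Proposition~\ref{prop:restrictedbasepullback} in its equidimensional case~(2) (a finite morphism is equidimensional), but then one must first match up sequences of rational approximations for $\bb B_-$, which is more delicate than the single rational twist $T_0$ used above.
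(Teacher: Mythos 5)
Your proposal is correct and follows essentially the same route as the paper: reduce $\bb B_-$ to $\bb B_+$ via Lemma~\ref{5.2} (the paper cites Remark~\ref{rmk:RealB}.(5) for the same reduction), pass to a rational twist by Corollary~\ref{cor:RealB+}, and pull back along a (normalized) Gieseker cover to land in the untwisted case of Corollary~\ref{cor:homogeneousB} via Proposition~\ref{prop:B+pull}. You simply spell out more of the details the paper leaves implicit, e.g.\ the explicit Lemma~\ref{5.2} computation and the injectivity of $f^{-1}$ on subsets when $f$ is surjective.
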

				\begin{proof}
					This is not a formal consequence of Definition \ref{def:BQtwist} and of the rank 1 case because we work on different projectivizations. 
					The statement about $\bb B_-$ follows from the corresponding statement about $\bb B_+$ by Remark \ref{rmk:RealB}.(5).
					
					Fix $c$. We can find simultaneous rational approximations of $T$ as in Corollary \ref{cor:RealB+} for $E\langle T\rangle$, for $S^c(E\langle T\rangle)=(S^cE)\langle cT\rangle$, and for tensor powers. Thus we can assume that $T$ is a $\bb Q$-Cartier $\bb Q$-divisor. 
					
					Let $n$ be such that $nT$ is Cartier. Let $f:Y\to X$ be a Gieseker cover (cf.~\cite{L}*{Theorem 4.1.10}), a finite flat cover such that $f^*\cal O_X(nB)\simeq \cal O_Y(nB')$ for some actually $\bb Z$-Cartier divisor $B'$ on $Y$. We replace $Y$ by its normalization. It is not clear that flatness is preserved, but the other properties are.
					Proposition \ref{prop:B+pull} reduces the problem to the untwisted bundle case which is Corollary \ref{cor:homogeneousB}.
				\end{proof}

				Similarly Lemma \ref{lem:tensorbasepm} and Lemma \ref{lem:directsums} extend to twisted bundles.
				
				\begin{proposition}
					Let $X$ be a normal projective variety.
					\begin{enumerate}[(i)]
						\item If $E_1\langle T_1\rangle$ and $E_2\langle T_2\rangle$ are two $\mathbb{R}$-twisted V-pseudo-effective vector bundles on $X$, then $E_1\langle T_1\rangle\otimes E_2\langle T_2\rangle$ is also V-psef. If furthermore one of them is in fact V-big, then so is the tensor product.
						
						\item  Let $E_1\langle T\rangle$ and $E_1\langle T\rangle$ be two $\mathbb{R}$-twisted vector bundles on $X$. Then $E_1\langle T\rangle\oplus E_2\langle T\rangle$ V-positive if and only if $E_1\langle T\rangle$ and $E_2\langle T\rangle$ are V-positive.
					\end{enumerate} 
					\begin{proof}
						$(i)$ Let $A_{m,1}$ and $A_{m,2}$ be ample $\bb R$-Cartier $\bb R$-divisors that limit to 0 zero with $m$ such that $T_i+A_{m,i}$ are $\bb Q$-Cartier $\bb Q$-divisors. It is sufficient to prove that $E_1\langle T_1+A_{m,1}\rangle\otimes E_2\langle T_2+A_{m,2}\rangle=(E_1\otimes E_2)\langle T_1+A_{m,1}+T_2+A_{m,2}\rangle$ is V-big. For fixed $m$, we pass to a finite Gieseker cover $f:Y\to X$ such that such that $f^*(T_i+A_{m,i})$ is $\bb Q$-linearly equivalent to a Cartier divisor for $i\in\{1,2\}$. Since $\bb B_{\pm}$ are numerical invariants, and by Proposition \ref{prop:B+pull}, we reduce to the case where the twists are Cartier, i.e., to the untwisted bundle case which is Lemma \ref{lem:tensorbasepm}. Part $(ii)$ is similar. One reduces to the untwisted case, which is Lemma \ref{lem:directsums}.
					\end{proof}
				\end{proposition}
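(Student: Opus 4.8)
The plan is to reduce both statements to the already-established untwisted case, namely Lemma \ref{lem:tensorbasepm} for $(i)$ and Lemma \ref{lem:directsums} for $(ii)$, by two successive reductions: first from $\bb R$-twists to $\bb Q$-twists, then from $\bb Q$-twists to honest (Cartier) twists. Throughout one uses that, by Definition \ref{def:BQtwist}, $\bb B_\pm$ of a twisted bundle is a numerical invariant of the twist, together with the twisted analogues of the monotonicity facts $\bb B_+(E\langle T+A\rangle)\subseteq\bb B_-(E\langle T\rangle)$ and $\bb B_-(E\langle T+A\rangle)\subseteq\bb B_-(E\langle T\rangle)$ for $A$ ample, which follow from Remark \ref{rmk:RealB}.(a),(c) applied on $\bb P(E)$.

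\textbf{Reduction to $\bb Q$-twists.} Pick ample $\bb R$-divisors $A_{m,i}\to 0$ with $T_i+A_{m,i}$ being $\bb Q$-Cartier $\bb Q$-divisors. For the V-psef part of $(i)$ one applies Lemma \ref{5.2} to write
\[\bb B_-\bigl((E_1\otimes E_2)\langle T_1+T_2\rangle\bigr)=\bigcup_m\bb B_+\bigl(E_1\langle T_1+A_{m,1}\rangle\otimes E_2\langle T_2+A_{m,2}\rangle\bigr),\]
so it suffices to prove, for every $m$, the containment $\bb B_+\bigl(E_1\langle T_1+A_{m,1}\rangle\otimes E_2\langle T_2+A_{m,2}\rangle\bigr)\subseteq\bb B_-(E_1\langle T_1\rangle)\cup\bb B_-(E_2\langle T_2\rangle)$; taking the union over $m$ then produces a subset of $\bb B_-(E_1\langle T_1\rangle)\cup\bb B_-(E_2\langle T_2\rangle)\ne X$. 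The V-big refinement is handled by instead writing $T_1+T_2=(T_1-A)+(T_2+A)$ for a small ample $A$, so that $E_1\langle T_1-A\rangle$ stays V-big and $E_2\langle T_2+A\rangle$ is V-psef, and rationally approximating the two twists; it then reduces to the V-big statement of Lemma \ref{lem:tensorbasepm}. For $(ii)$, where both summands carry the same twist $T$, so $E_1\langle T\rangle\oplus E_2\langle T\rangle=(E_1\oplus E_2)\langle T\rangle$, the V-big case uses the exact rational recovery Corollary \ref{cor:RealB+}.(2) and the V-psef case uses the increasing-union formula Corollary \ref{cor:RealB+}.(3) together with the monotonicity facts above.

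\textbf{Reduction to Cartier twists.} For each fixed collection of $\bb Q$-twists arising above, pick $n$ clearing denominators and pass to a finite flat Gieseker cover $f\colon Y\to X$ (\cite{L}*{Theorem 4.1.10}) with $f^*\mathcal O_X(n(T_i+A_{m,i}))\simeq\mathcal O_Y(nB_i')$ for Cartier $B_i'$; replace $Y$ by its normalization --- flatness may be lost, but $f$ remains finite between normal projective varieties, which is all the pullback statements use. Since $f$ is finite, $\text{NF}(f)=\emptyset$, so Proposition \ref{prop:B+pull} gives $\bb B_+(f^*(-))=f^{-1}\bb B_+(-)$ and Proposition \ref{prop:restrictedbasepullback} gives $\bb B_-(f^*(-))=f^{-1}\bb B_-(-)$; hence a twisted bundle on $X$ is V-positive if and only if its pullback is, and after pulling back all the twists have become Cartier, i.e.\ we are in the untwisted situation. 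Applying Lemma \ref{lem:tensorbasepm} on $Y$ and transporting the resulting inclusion of base loci back down by $f^{-1}$ yields the per-$m$ containment needed in the previous step, finishing $(i)$; for $(ii)$ one pushes down the equality of Lemma \ref{lem:directsums} in the same way, and then V-positivity of $E_1\langle T\rangle\oplus E_2\langle T\rangle$ is equivalent to neither $\bb B_\pm(E_1\langle T\rangle)$ nor $\bb B_\pm(E_2\langle T\rangle)$ being all of $X$.

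\textbf{The main obstacle} is the bookkeeping in the first reduction rather than any new geometric content: since the diminished base locus of an $\bb R$-twist is only an \emph{increasing} union of augmented base loci of ample perturbations, it does not suffice to know that each perturbed tensor product is V-big --- one must carry a single fixed target set $\bb B_-(E_1\langle T_1\rangle)\cup\bb B_-(E_2\langle T_2\rangle)\ne X$ uniformly through all the perturbations, which is precisely where the monotonicity inclusions above are needed. A minor secondary point is to check that normalizing the Gieseker cover preserves exactly the hypotheses --- finiteness, and normality of source and target --- invoked in Propositions \ref{prop:B+pull} and \ref{prop:restrictedbasepullback}.
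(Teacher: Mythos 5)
Your proof follows the same two-stage reduction as the paper (rational approximation of the $\bb R$-twist, then a Gieseker cover to make the $\bb Q$-twist Cartier, finally invoking Lemma~\ref{lem:tensorbasepm} and Lemma~\ref{lem:directsums}), so the two arguments are in the same spirit. The difference is that you are more careful than the paper in the first reduction, and this is a genuine improvement rather than mere pedantry. The paper writes ``It is sufficient to prove that $E_1\langle T_1+A_{m,1}\rangle\otimes E_2\langle T_2+A_{m,2}\rangle$ is V-big,'' but taken literally this is not sufficient: by Lemma~\ref{5.2}, V-pseudo-effectivity of the $\bb R$-twisted tensor product is the statement that $\bigcup_m\bb B_+\bigl((E_1\otimes E_2)\langle T_1+T_2+A_{m,1}+A_{m,2}\rangle\bigr)\neq X$, and an increasing union of proper closed subsets can certainly cover $X$ (this is precisely the phenomenon behind the Lesieutre-type examples that motivate treating $\bb B_-$ carefully). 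What one actually must establish, and what you correctly isolate, is the \emph{uniform} containment
\[
\bb B_+\bigl(E_1\langle T_1+A_{m,1}\rangle\otimes E_2\langle T_2+A_{m,2}\rangle\bigr)\subseteq \bb B_-(E_1\langle T_1\rangle)\cup\bb B_-(E_2\langle T_2\rangle)
\]
for every $m$, with the right-hand side fixed and $\neq X$. This containment does come out of Lemma~\ref{lem:tensorbasepm} on the Gieseker cover, pushed down via Propositions~\ref{prop:B+pull} and~\ref{prop:restrictedbasepullback} and combined with the monotonicity facts $\bb B_+(E\langle T+A\rangle)\subseteq\bb B_-(E\langle T\rangle)$ (from Lemma~\ref{5.2}) and $\bb B_-(E\langle T+A\rangle)\subseteq\bb B_-(E\langle T\rangle)$ (from Remark~\ref{rmk:RealB}.(a)) for $A$ ample, so the paper's intended argument does go through; your write-up simply makes explicit the step that the paper elides. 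Your handling of the V-big refinement via splitting $T_1+T_2=(T_1-A)+(T_2+A)$, and your observation that after normalizing the Gieseker cover one retains finiteness between normal varieties (which is all Propositions~\ref{prop:B+pull} and~\ref{prop:restrictedbasepullback} need), both match what the paper relies on.
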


			\section{L-big and L-psef bundles}
			
			\begin{lemma}Let $E$ be a vector bundle on $X$. By a slight abuse denote $L=\cal O_{\bb P(E)}(1)$. Then
				\begin{enumerate}[(i)]
					\item The following are equivalent
					\begin{enumerate}
						\item $E$ is L-big.
						\item For every ample divisor $A$ on $\bb P(E)$ there exists $N>0$ such that for all $n\geq N$ and all sufficiently large $m$ (depending on $n$), the line bundle $L^{\otimes nm}(-mA)$ is effective.
						\item There exists $A$ ample on $\bb P(E)$ and $n>0$ such that $L^{\otimes n}(-A)$ is pseudeo-effective .
						\item For every ample divisor $H$ on $X$ there exists $N>0$ such that for all $n\geq N$ and all sufficiently large $m$ (depending on $n$), the bundle $S^{nm}E(-mH)$ is effective.
						\item There exists $H$ ample on $X$ and $n>0$ such that $S^{n}E(-H)$ is pseudo-effective.
					\end{enumerate}
					\smallskip
					
					\item The following are equivalent
					\begin{enumerate}
						\item $E$ is L-psef.
						\item For some (or every) ample divisor $A$ on $\bb P(E)$ and every $n>0$, the line bundle $L^{\otimes nm}(mA)$ is effective for all sufficiently large $m$ (depending on $n$).
						\item (In the smooth case) There exists an ample divisor $A$ on $\bb P(E)$ such that $L^{\otimes m}(A)$ is effective for all sufficiently large $m$.
						\item For some (or every) ample divisor $H$ on $X$ and every $n>0$ the bundle $S^{nm}E(mH)$ is effective for all sufficiently large $m$ (depending on $n$).
						\item (In the smooth case) There exists an ample divisor $H$ on $X$ such that $S^mE(H)$ is effective for all sufficiently large $m$.
					\end{enumerate}
				\end{enumerate}
			\end{lemma}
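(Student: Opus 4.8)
The plan is to recast every condition as a statement about the single line bundle $L=\cal O_Y(1)$ on $Y\coloneqq\bb P(E)$ (with $\xi=c_1(L)$), using that $\pi_*\cal O_Y(m)=S^mE$ and $R^{>0}\pi_*\cal O_Y(m)=0$ for $m\geq 0$, so that $h^0(X,S^mE\otimes M)=h^0(Y,m\xi+\pi^*M)$ for every line bundle $M$ on $X$. Thus ``$S^{nm}E(\pm mH)$ effective'' means $h^0(Y,nm\xi\pm m\pi^*H)>0$, and ``$S^nE(\pm H)$ pseudo-effective'' means the line bundle $L^{\otimes n}\otimes\pi^*\cal O_X(\pm H)$ on $Y$ is pseudo-effective, i.e. $n\xi\pm\pi^*H\in\Eff(Y)$; by definition $E$ is L-big (resp.\ L-psef) iff $\xi$ is big (resp.\ pseudo-effective) on $Y$. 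We may assume $\operatorname{rk}E\geq 2$ (if $\operatorname{rk}E=1$ then $Y=X$, $L=E$, and every assertion is a classical fact about the line bundle $E$). I will use freely: the big cone is the open interior of $\Eff(Y)$; a big integral divisor $D$ has $h^0(mD)>0$ for $m\gg 0$; pseudo-effective $+$ ample $=$ big and pseudo-effective $+$ nef $=$ pseudo-effective; and $\xi+\pi^*H$ is ample on $Y$ for $H$ sufficiently ample on $X$.

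Part (i) is then purely formal. For $(a)\Rightarrow(b)$ and $(a)\Rightarrow(d)$: if $\xi$ is big then $\xi-\tfrac1nA$ (resp.\ $\xi-\tfrac1n\pi^*H$) is big for all $n$ past some $N$ --- by openness and big $+$ nef --- so the integral class $n\xi-A$ (resp.\ $n\xi-\pi^*H$) is big and therefore $nm\xi-mA$ (resp.\ $nm\xi-m\pi^*H$) is effective for $m\gg 0$. The implications $(b)\Rightarrow(c)$, $(d)\Rightarrow(e)$ are trivial after rescaling $(n,A)$, and $(c)\Rightarrow(a)$ follows since $n\xi=(n\xi-A)+A$ is big. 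For $(e)\Rightarrow(a)$, fix $H_1$ ample on $X$ with $\xi+\pi^*H_1$ ample; for small rational $\eta>0$ the class $(n\xi-\pi^*H)+\eta(\xi+\pi^*H_1)=(n+\eta)\xi-\pi^*(H-\eta H_1)$ is pseudo-effective $+$ ample $=$ big, with $H-\eta H_1$ ample, so after rescaling $\xi-\pi^*(\text{ample})$ is big, hence $\xi$ is big.

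Part (ii) has a formal half and a genuinely analytic half. Formal: $(a)\Rightarrow(b)$ because $n\xi+A$ is big (pseudo-effective $+$ ample) for every ample $A$ and every $n$; conversely if $nm\xi+mA$ is effective for $m\gg 0$ then $\xi+\tfrac1nA\in\Eff(Y)$ for every $n$, so $\xi\in\Eff(Y)$ on letting $n\to\infty$ (the pseudo-effective cone is closed) --- since $(a)$ does not mention $A$ this also settles the ``some''/``every'' alternative. The equivalences $(a)\Leftrightarrow(d)$ are identical, using $\xi+\pi^*H$ ample for $H$ sufficiently ample so that $n\xi+\pi^*H$ is big. Also $(e)\Rightarrow(c)$: take $A=\pi^*H+A_0$ with $A_0$ very ample on $Y$, so $A$ is ample and $m\xi+A=(m\xi+\pi^*H)+A_0$ is a sum of effective divisors for $m\gg 0$; and $(c)\Rightarrow(a)$ again because $m\xi+A$ effective for $m\gg 0$ forces $\xi\in\Eff(Y)$. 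What remains --- and where smoothness and characteristic zero enter --- is $(a)\Rightarrow(c)$.

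I expect $(a)\Rightarrow(c)$ to be the main obstacle: from $\xi$ pseudo-effective on the smooth variety $Y$ one must produce a single ample $A$ with $h^0(Y,m\xi+A)>0$ for all $m\geq 1$ (not merely asymptotically in $m$). The plan is to choose $A$ so positive that $A-K_Y-(\dim Y+1)H_0$ is ample for a fixed base-point-free ample $H_0$, and then, for each $m$, to apply the global generation theorem for asymptotic multiplier ideals (\cite{L}) to $B\coloneqq m\xi+\varepsilon A$ --- a big $\bb Q$-divisor (pseudo-effective $+$ ample) for small rational $\varepsilon>0$ --- noting that $(m\xi+A)-K_Y-B=(1-\varepsilon)A-K_Y$ is sufficiently positive; this makes $\cal O_Y(m\xi+A)\otimes\mathscr{J}(\|B\|)$ globally generated, hence (as $\mathscr{J}(\|B\|)$ is a nonzero ideal sheaf, $B$ being big) $h^0(Y,m\xi+A)>0$. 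Finally $(a)\Rightarrow(e)$ is deduced from $(a)\Rightarrow(c)$: write $\cal O_Y(A)\simeq L^{\otimes a}\otimes\pi^*M$ with $a\geq 1$ (positivity of $A$ along the fibres; here $\Pic(\bb P(E))=\bb Z L\oplus\pi^*\Pic(X)$ uses $X$ smooth), so $h^0(X,S^kE\otimes M)=h^0(Y,k\xi+\pi^*M)>0$ for all $k\geq a$; picking very ample $M_2,M_3$ on $X$ with $M\sim M_2-M_3$ and setting $H\coloneqq M_2$, one gets $h^0(X,S^kE(H))\geq h^0(X,S^kE\otimes M)>0$ for $k\geq a$, which is $(e)$.
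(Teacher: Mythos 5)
Your proof is essentially correct for the base field of characteristic zero, and the formal parts (all of (i), and the easy implications in (ii)) are essentially the arguments the paper gives, both boiling down to openness of the big cone and closedness of the pseudo-effective cone on $\bb P(E)$. The one genuinely hard step, $(a)\Rightarrow(c)$ in (ii), is where you diverge from the paper: you supply a self-contained argument via Nadel vanishing and global generation for asymptotic multiplier ideals, producing a single ample $A$ such that $m\xi+A$ is effective for every $m$. The paper instead invokes \cite{Nak}*{Theorem V.1.12} in characteristic zero and \cite{CHMS} in positive characteristic (splitting off the numerical-dimension-zero case, where $L$ is already effective, by hand). Your route is in substance how Nakayama's result is proved, so it's a legitimate and more explicit alternative; what it costs you is positive characteristic, which the lemma as stated (see the sentence ``The result is valid without restriction on the characteristic of the base field'') is meant to cover. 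If you want your proof to cover that case you would need to replace the multiplier-ideal step by the argument of \cite{CHMS}. For $(c)\Rightarrow(e)$ you and the paper make the identical move: write $\cal O_Y(A)\simeq L^{\otimes a}\otimes\pi^*\cal O_X(B)$, absorb $B$ into a sufficiently ample $H$, and push forward. One small slip worth noting: from $(c)$ you only get $h^0(Y,m\xi+A)>0$ for $m$ sufficiently large, so your conclusion should read ``for all $k$ sufficiently large'' rather than ``for all $k\geq a$'' --- this is harmless since $(e)$ only requires an asymptotic statement.
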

			
			\noindent The result is valid without restriction on the characteristic of the base field.
			
			\begin{proof}
				$(i)$. It is clear that $(a)-(c)$ are restatements of $L$ being a big line bundle. 
				We now relate the conditions in $(d)-(e)$ to positivity conditions on $L$.
				The effectivity of $S^mE(-H)$ is equivalent to that of $L^{\otimes m}\otimes\pi^*\cal O_X(-H)$.
				Let $\xi=c_1(L)$. Since $\xi$ is $\pi$-ample, we have that $\xi+n\pi^*H$ is ample for all large enough $n$. It follows that the cone $\langle\xi,\pi^*H\rangle$ slices through ${\rm Big}(\bb P(E))$ (it cannot be fully contained in its boundary because it contains an ample class). Any of the conditions $(d)-(e)$ imply that $\xi$ is effective (up to scaling) and cannot be on the boundary of the big cone, hence it is big. 
				
				Conversely let's prove $(b)\to(d)$. Fix $H$ ample on $X$ and choose $N$ such that $A=L\otimes\pi^*\cal O_X(NH)$ is ample and $(n-1)H$ is effective for all $n\geq N$.
				For $n\geq N$ the effectivity of $L^{\otimes nm}(-mA)$ is equivalent to that of $S^{(n-1)m}E(-NmH)$. Then for $n\geq N-1$ we obtain that $S^{nm}E(-mH)$ is effective.
				$(d)\to(e)$ is clear.
				\smallskip
				
				$(ii)$. Except for $(c)$ and $(e)$ which use the extra smoothness assumption, the equivalences are similar to those in part $(i)$. It is clear that both $(c)$ and $(e)$ imply that $L$ is pseudo-effective, hence $(a)$. For $(a)\to(c)$, we use \cite{Nak}*{Theorem V.1.12} in characteristic zero and \cite{CHMS} in positive characteristic. These handle the case where $L$ has positive numerical dimension. When its numerical dimension is zero, then $L$ is in fact effective and the implications are trivial.
				
				For $(c)\to (e)$, fix $A$ as in $(d)$. Then $A=L^{\otimes a}\otimes\pi^*\cal O_X(B)$ for some $a>0$ and some divisor $B$ on $X$. Choose $H$ ample on $X$ so that $H-B$ is effective. Then the effectivity of $L^{\otimes m}(A)$ implies that of $S^{m+a}E(H)$. 
			\end{proof}
			
			\begin{proposition}\label{prop:subLbig}
				Let $E'\subset E$ be an inclusion of vector bundles. If $E'$ is L-positive, then so is $E$.
			\end{proposition}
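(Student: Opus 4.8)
The plan is to avoid comparing $\bb P(E')$ and $\bb P(E)$ directly: since $E'\subseteq E$ is a subsheaf, not a quotient, there is no natural morphism between the two projective bundles, so Lemma \ref{lem:quotients} does not apply. Instead I would reformulate L-positivity on the base $X$ using the lemma immediately preceding this proposition, and then transfer effectivity statements along the inclusion $E'\subseteq E$.

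The one nonformal input is this: for every $m\geq 1$ the natural map $S^mE'\to S^mE$ is injective. I would prove it by working over the dense open $U\subseteq X$ on which $E/E'$ is locally free. There $0\to E'|_U\to E|_U\to (E/E')|_U\to 0$ is a locally split exact sequence of vector bundles, so locally $S^mE|_U\cong\bigoplus_{a+b=m}S^aE'|_U\otimes S^b(E/E')|_U$, with $S^mE'|_U$ being the summand $a=m$ and the natural map the inclusion of that summand; hence $S^mE'\to S^mE$ is injective over $U$. Its kernel is then a subsheaf of the torsion-free (indeed locally free) sheaf $S^mE'$ that vanishes on a dense open, so the kernel is zero. (In characteristic zero one may argue more quickly: $S^mF$ is functorially a direct summand of $T^mF$, and $T^mE'\to T^mE$ is injective because tensoring with a locally free sheaf is exact; but the argument above needs no restriction on the characteristic, matching the preceding lemma.) Since twisting by any divisor $D$ on $X$ is an exact functor, this yields inclusions $S^mE'(D)\hookrightarrow S^mE(D)$, and therefore $H^0(X,S^mE'(D))\neq 0$ implies $H^0(X,S^mE(D))\neq 0$: effectivity of a symmetric power of a twist of $E'$ forces effectivity of the corresponding symmetric power of the twist of $E$.

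The conclusion is then bookkeeping with the preceding lemma. If $E'$ is L-big, fix an ample $H$ on $X$; by part $(i)$ of that lemma there is $N>0$ such that $S^{nm}E'(-mH)$ is effective for all $n\geq N$ and all $m\gg 0$, and by the previous paragraph the same is true of $S^{nm}E(-mH)$, so $E$ is L-big. If $E'$ is L-psef, fix an ample $H$ on $X$; by part $(ii)$, for every $n>0$ the bundle $S^{nm}E'(mH)$ is effective for all $m\gg 0$, hence so is $S^{nm}E(mH)$, and $E$ is L-psef. This covers both meanings of ``L-positive.'' The only genuine obstacle is the injectivity of $S^mE'\to S^mE$: symmetric powers are merely right exact, so this does require the splitting-over-a-dense-open argument (or the characteristic-zero summand trick) rather than being automatic; everything else is a direct translation through the numerical reformulations already established.
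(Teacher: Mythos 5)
Your approach matches the paper's one-line proof, which simply asserts the inclusions $S^{mn}E'(\pm nH)\subset S^{mn}E(\pm nH)$ and then invokes the effectivity characterizations in the preceding lemma. The extra care you take to justify injectivity of $S^mE'\to S^mE$ (via the dense-open splitting, or the characteristic-zero direct-summand argument) is correct and fills in a detail the paper leaves implicit.
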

			
			\noindent We do not assume that $E'$ is a subbundle, meaning that $E/E'$ is a vector bundle.
			
			\begin{proof}For every ample $H$ we have inclusions $S^{mn}E'(\pm nH)\subset S^{mn}E(\pm nH)$.
			\end{proof}
			
			\begin{proposition}\label{prop:directLhomogeneity}
				If $E$ is L-positive then $S^cE$ is L-positive for al $c\geq 1$.
			\end{proposition}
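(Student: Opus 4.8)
The plan is to deduce L-positivity of $S^{c}E$ from that of $E$ by comparing section rings, using the ``effectivity of twisted symmetric powers'' characterizations of L-bigness and L-psef-ness from the preceding lemma. A purely geometric approach through the relative Veronese embedding $v\colon\mathbb{P}(E)\hookrightarrow\mathbb{P}(S^{c}E)$ is not available: although $v^{*}\mathcal{O}_{\mathbb{P}(S^{c}E)}(1)=\mathcal{O}_{\mathbb{P}(E)}(c)$, positivity of $\mathcal{O}_{\mathbb{P}(E)}(c)$ does not ascend from the subvariety $\mathbb{P}(E)$ to the larger bundle $\mathbb{P}(S^{c}E)$. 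For the same reason — L-positivity is not known to pass to quotients $T^{c}E\twoheadrightarrow S^{c}E$ — one cannot simply transcribe the argument of Corollary~\ref{cor:homogeneousB}.

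First I would record the two criteria, applied to $S^{c}E$. By the preceding lemma, $S^{c}E$ is L-big iff for every ample divisor $H$ on $X$ there is an $N$ such that $S^{nm}(S^{c}E)(-mH)$ is effective for all $n\geq N$ and all $m\gg0$; and $S^{c}E$ is L-psef iff for every ample $H$ on $X$ and every $n>0$ the bundle $S^{nm}(S^{c}E)(mH)$ is effective for all $m\gg0$. The strategy is to produce the required sections from sections of symmetric powers of $E$ itself.

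The key algebraic input, valid since we work in characteristic zero, is that $S^{ab}E$ is a direct summand of $S^{a}(S^{b}E)$ for all $a,b\geq1$: the natural multiplication map $S^{a}(S^{b}E)\to S^{ab}E$ is surjective, and — polynomial $\mathrm{GL}$-representations forming a semisimple category in characteristic zero — it splits; equivalently, $\mathbb{S}_{(ab)}E=S^{ab}E$ occurs in the plethysm decomposition of $S^{a}(S^{b}E)$. In particular, for any line bundle $M$ on $X$, a nonzero section of $S^{ab}E\otimes M$ gives a nonzero section of $S^{a}(S^{b}E)\otimes M$.

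Granting this, the argument is short. Fix an ample $H$ on $X$; in the L-big case let $N$ be as in the criterion for $E$, and in the L-psef case fix in addition an arbitrary $n>0$. Since $c\geq1$ we have $nc\geq n$ (so $nc\geq N$ whenever $n\geq N$), hence applying the criterion for $E$ with the exponent $nc$ in place of $n$ shows that $S^{(nc)m}E(-mH)$ (resp.\ $S^{(nc)m}E(mH)$) is effective for all $m\gg0$. Taking $a=nm$ and $b=c$ above, $S^{ncm}E$ is a direct summand of $S^{nm}(S^{c}E)$, so $S^{nm}(S^{c}E)(-mH)$ (resp.\ $S^{nm}(S^{c}E)(mH)$) is effective for all $m\gg0$. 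This is exactly the criterion for $S^{c}E$, so $S^{c}E$ is L-big (resp.\ L-psef). The only step with real content is the direct-summand claim, which genuinely relies on characteristic zero; the rest is bookkeeping with the two criteria.
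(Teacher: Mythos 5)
Your argument is correct and is essentially the paper's: both rest on the characteristic-zero inclusion $S^{mc}E\hookrightarrow S^{m}S^{c}E$ together with the effectivity criteria of the preceding lemma. The only cosmetic difference is how you justify that inclusion — you invoke the splitting of the multiplication map $S^{m}S^{c}E\twoheadrightarrow S^{mc}E$ via semisimplicity of polynomial $\mathrm{GL}$-representations, whereas the paper dualizes the corresponding surjection for $E^{\vee}$; both give the same subbundle, and only the inclusion (not the direct-summand refinement) is actually used.
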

			\begin{proof}The surjection $T^{mc}E\twoheadrightarrow S^{mc}E$ induces a sujection $S^mS^cE\twoheadrightarrow S^{mc}E$. By dualizing the corresponding surjection for the duals, in characteristic zero we obtain an inclusion $S^{mc}E\subset S^mS^cE$. This easily implies that if $E$ is L-big (resp.~L-psef), then so is $S^cE$.
			\end{proof}
			
			The converse is not true. This contradicts the intuition coming from line bundles, from V-positivity notions, or from other positivity notions such as ampleness or nefness that are also defined by the positivity of $\cal O_{\bb P(E)}(1)$.
			
			\begin{example}\label{ex:Lcounter}
				Choose three general cubics in $\bb P^2$. They induce an exact sequence
				$0\to M\to \cal O_{\bb P^2}^{\oplus 3}\to \cal O_{\bb P^2}(3)\to 0$.
				Let 
				\[E\coloneqq M^{\vee}(-1).\]
				We prove that $E$ is not L-psef, but $S^2S^{2m}E$ is L-big for all $m\geq 1$. This logically implies that the converse of Proposition \ref{prop:directLhomogeneity} (for L-bigness or L-pseudo-effectivity) fails either for the bundle $S^{2m}E$ and $c=2$, or fails for $E$ and $c=2m$.
				
				For the claim on the L-bigness, note that ${\rm rk}\, E=2$ and so for all $m\geq 1$ we have natural inclusions $(\bigwedge^2E)^{\otimes 2m}\subset S^2S^{2m}E$. See \cite{FLstability}*{Corollary 4.10}. One computes that $\bigwedge^2E=\det E=\cal O_{\bb P^2}(1)$ is big, hence so are all $S^2S^{2m}E$ by Proposition \ref{prop:subLbig}.
				
				For the failure of L-pseudo-effectivity, it is sufficient to check that for all $n>1$ and all $\ell>0$, the bundle $(S^{n\ell}E)(\ell)$ has no nonzero sections. We have exact sequences
				\[0\to S^{n\ell-1}(\cal O_{\bb P^2}^{\oplus 3}(-1))(\ell-4)\to S^{n\ell}(\cal O_{\bb P^2}^{\oplus 3}(-1))(\ell)\to S^{n\ell}E(\ell)\to 0.\]
				The totally split bundles on the left and in the middle of the exact sequence are direct sums of line bundles of degrees $2\ell-n\ell-4$ and $\ell-n\ell$ respectively. Since the intermediary cohomology of all line bundles on $\bb P^2$ in vanishes, we deduce that $H^0(\bb P^2,S^{n\ell}E(\ell))=0$ for all $n>1$ and $\ell>0$.\qed   
			\end{example}
			
			\begin{proposition}
				Let $E$ be a L-psef vector bundle and $F$ be an L-big bundle on $X$. Then $ E\otimes F$ is an L-big vector bundle. 
			\end{proposition}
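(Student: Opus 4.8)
The plan is to reduce L-bigness of $E\otimes F$ to the effectivity criterion in the preceding Lemma: by criterion (i)(d) there, which characterizes L-bigness, it suffices to check that for \emph{every} ample divisor $H$ on $X$ there is an integer $N>0$ such that the bundle $S^{nm}(E\otimes F)(-mH)$ has a nonzero global section for all $n\geq N$ and all sufficiently large $m$. So fix an ample divisor $H$ on $X$.

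I would bring in the two hypotheses through the same Lemma. Since $F$ is L-big, criterion (i)(d) applied to the ample divisor $2H$ produces $N>0$ such that $S^{nm}F(-2mH)$ is effective for all $n\geq N$ and $m\gg 0$. Since $E$ is L-psef, criterion (ii)(d) applied to $H$ gives, for every $n>0$, that $S^{nm}E(mH)$ is effective for $m\gg 0$. Now fix any $n\geq N$. For $m\gg 0$ both $S^{nm}E(mH)$ and $S^{nm}F(-2mH)$ carry nonzero global sections; since a nonzero section of a locally free sheaf is nonzero at the generic point of the (integral) variety $X$, the tensor product of such sections is nonzero there, so $(S^{nm}E\otimes S^{nm}F)(-mH)=S^{nm}E(mH)\otimes S^{nm}F(-2mH)$ is effective.

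It remains to compare $S^{nm}E\otimes S^{nm}F$ with $S^{nm}(E\otimes F)$, and this is the step to get right (and the only place characteristic zero is used). In characteristic zero the Cauchy decomposition gives $S^{d}(E\otimes F)\cong\bigoplus_{\lambda\vdash d}\bb S_{\lambda}E\otimes\bb S_{\lambda}F$ (cf.~\cite{FH}), and the term indexed by the single row $\lambda=(d)$ is exactly $S^{d}E\otimes S^{d}F$ since $\bb S_{(d)}=S^{d}$. Thus $S^{nm}E\otimes S^{nm}F$, hence also $(S^{nm}E\otimes S^{nm}F)(-mH)$, is a direct summand of $S^{nm}(E\otimes F)(-mH)$, so the latter inherits a nonzero section for all $n\geq N$ and $m\gg 0$. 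Feeding this back into criterion (i)(d) of the preceding Lemma shows that $E\otimes F$ is L-big.

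The only real obstacle I anticipate is bookkeeping: arranging the ample twists ($2H$ for $F$, $H$ for $E$) so that $S^{nm}E(mH)\otimes S^{nm}F(-2mH)$ collapses to $(S^{nm}E\otimes S^{nm}F)(-mH)$, and respecting that the threshold $N$ in criterion (i)(d) is allowed to depend on $H$. In positive characteristic one would need a replacement for the Cauchy splitting; an alternative would be to work directly on $\bb P(E\otimes F)$ via the relative Segre embedding $\bb P(E)\times_{X}\bb P(F)\hookrightarrow\bb P(E\otimes F)$, along which $\cal O_{\bb P(E\otimes F)}(1)$ restricts to $\cal O_{\bb P(E)}(1)\boxtimes_{X}\cal O_{\bb P(F)}(1)$, but since that subvariety has strictly smaller dimension, converting positivity there into bigness of $\cal O_{\bb P(E\otimes F)}(1)$ would require additional input, and I would not pursue it unless characteristic zero were to be dropped.
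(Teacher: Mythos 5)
Your argument is correct and follows essentially the same route as the paper: both rely on the characteristic-zero inclusion $S^{m}E\otimes S^{m}F\subset S^{m}(E\otimes F)$ (you obtain it from the Cauchy decomposition, the paper by dualizing the natural surjection $S^{m}(E^{\vee}\otimes F^{\vee})\twoheadrightarrow S^{m}E^{\vee}\otimes S^{m}F^{\vee}$, which are two faces of the same fact), then twist by $H$ and $-2H$ and invoke criteria (i)(d) and (ii)(d) of the preceding Lemma. The bookkeeping you flag is handled correctly.
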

			
			\begin{proof}The natural surjection $T^m(E\otimes F)=T^mE\otimes T^mF\twoheadrightarrow S^mE\otimes S^mF$ descends to a surjective $S^m(E\otimes F)\twoheadrightarrow S^mE\otimes S^mF$. By dualizing the corresponding surjection for the duals we also obtain a natural inclusion $S^mE\otimes S^mF\subset S^m(E\otimes F)$ in characteristic zero. Thus we also have inclusions $(S^mE(H))\otimes(S^mF(-2H))\subset S^m(E\otimes F)(-H)$ for any ample $H$ on $X$.
			\end{proof}
			
\section{Cones of big and of pseudoeffective vector bundles}

Let $X$ be a complex projective manifold of dimension $n$.
Let $N^*(X)=\bigoplus_{i=0}^nN^i(X)$ be the real numerical ring of $X$.
Let $E$ be a vector bundle. Let $\ch(E)$ be its {\bf Chern character}, and also consider its {\bf log-Chern character},
\[\lc(E)\coloneqq\log\ch(E)=\log r+\frac 1rc_1(E)-\frac1{2r^2}(2rc_2(E)-(r-1)c_1^2)+\ldots\in N^*(X).\]
We have $\ch(E\otimes F)=\ch(E)\cdot\ch(F)$ and $\ch(E\oplus F)=\ch(E)+\ch(F)$. Furthermore $\lc(E\otimes F)=\lc E+\lc F$.
\cite{Fu} considered the convex cones ${\rm Amp}_{\lc}(X)\subset\Nef_{\lc}(X)\subseteq N^*(X)$ generated by $\lc(E)$ where $E$ is ample, respectively nef. The first author then proved that the two cones are the same up to closure. A related construction was carried out using the Chern character $\ch$.

One can similarly define positive cones of V-positive bundles using characteristic classes of V-big and respectively V-pseudoeffective bundles. 
In this way we obtain ${\rm Big}^V_{\lc}(X)$ and ${\rm Psef}^V_{\lc}(X)$.
As in \cite{Fu} one can prove that they are full-dimensional cones in $N^*(X)$ that agree up to closure and surject onto the classical cones ${\rm Big}(X)\subset\Nef(X)\subset N^1(X)$ via projection onto the degree 1 component $N^*(X)\to N^1(X)$. This is an application of Proposition \ref{prop:goodforcones}. 

These cones exhibit pathological behavior. For instance they are not necessarily pointed cones, ample divisors do not determine interior classes, and the big cone is not necessarily open.

Many of the classical positivity notions for divisors are numerical, e.g., ampleness, nefness, bigness, pseudoeffectivity. This is not the case in higher rank. For instance $\cal O_{\bb P^1}(1)\oplus\cal O_{\bb P^1}(-1)$ and $\cal O_{\bb P^1}^{\oplus 2}$ have the same classes, but the first is not V-positive.

	\end{document}